\newtheorem{theorem}{Theorem}
\newtheorem{lemma}[theorem]{Lemma}
\newtheorem{prop}[theorem]{Proposition}
\newtheorem{proposition}[theorem]{Proposition}
\newtheorem{remark}[theorem]{Remark}
\newcommand{\vertiii}[1]{{\Big\vert\kern-0.25ex\Big\vert\kern-0.25ex\Big\vert #1 
    \Big\vert\kern-0.25ex\Big\vert\kern-0.25ex\Big\vert}}
\newcommand*\wthelper[2]{%
        \hbox{\dimen@\accentfontxheight#1%
                \accentfontxheight#11.17\dimen@
                $\m@th#1\widetilde{#2}$%
                \accentfontxheight#1\dimen@
        }%
}
\newcommand*\accentfontxheight[1]{%
        \fontdimen5\ifx#1\displaystyle
                \textfont
        \else\ifx#1\textstyle
                \textfont
        \else\ifx#1\scriptstyle
                \scriptfont
        \else
                \scriptscriptfont
        \fi\fi\fi3
}
\newcommand{\ophim}{ \phi}
\newcommand{\ophic}{\Phi}
\newcommand{\R}{\mathbb{R}}
\newcommand{\cC}{\mathcal{C}}
\newcommand{\cE}{\mathcal{E}}
\newcommand{\cL}{\mathcal{L}}
\newcommand{\cP}{\mathcal{P}}
\newcommand{\cB}{\mathcal{B}}
\renewcommand{\epsilon}{\varepsilon}
\newcommand{\eps}{\varepsilon}
\newcommand{\e}{\eps}
\newcommand{\1}{\mathds{1}}
\newcommand\inner[2]{\left\langle #1 \middle| #2 \right\rangle}
\newcommand{\fdfrac}[2]{\mbox{\footnotesize$\displaystyle\frac{#1}{#2}$}}
\numberwithin{equation}{section}
\numberwithin{theorem}{section}
\crefname{assumption}{assumption}{assumptions}
\crefname{theorem}{theorem}{theorems}
\crefname{lem}{lemma}{lemmas}
\crefname{cor}{corollary}{corollaries}
\crefname{prop}{proposition}{propositions}
\Crefname{theorem}{Theorem}{Theorems}
\crefname{conjecture}{conjecture}{conjectures}
\newcommand{\be}{\begin{equation}}
\newcommand{\ee}{\end{equation}}
\begin{document}
\title{The Bramson delay in a Fisher-KPP equation with log-singular non-linearity}

%

\author{Emeric Bouin 
\and  
Christopher Henderson 
}

\begin{abstract}
We consider a class of reaction-diffusion equations of Fisher-KPP type in which the nonlinearity (reaction term) $f$ is merely $C^1$ at $u=0$ due to a logarithmic competition term.  We first derive the asymptotic behavior of (minimal speed) traveling wave solutions that is, we obtain precise estimates on the decay to zero of the traveling wave profile at infinity.  We then use this to characterize the Bramson shift between the traveling wave solutions and solutions of the Cauchy problem with localized initial data. 
We find a phase transition depending on how singular $f$ is near $u=0$ with quite different behavior for more singular $f$.  This is in contrast to the smooth case, that is, when $f \in C^{1,\delta}$, where these behaviors are completely determined by $f'(0)$.  In the singular case, several scales appear and require new techniques to understand.
\end{abstract}

\maketitle

\noindent{\bf Key-Words:} {Reaction-diffusion equations, Logarithmic delay, traveling waves}\\
\noindent{\bf AMS Class. No:} {35K57, 35Q92, 45K05, 35C07}


\section{Introduction}

In this paper, we consider a Fisher-KPP (FKPP) type equation with a rough non-linearity
\begin{equation}\label{e:main}
u_t = u_{xx} + u\left(1 - A\left(\log\left(\frac{\nu}{u}\right)\right)^{1-r}\right),
\end{equation}
for $r>1$, $A>0$, and $\nu = e^{A^{-1/(r-1)}}$.  We note that $\nu$ is a normalization constant that may be removed by scaling; however, it ensures that $1$ is a steady state of~\eqref{e:main}.  Our goal is to understand the effect of the parameter $r$, which quantifies the singularity of the reaction term, on the behavior of solutions.  In particular, we study the shape of the minimal speed traveling wave solutions of~\eqref{e:main} and  the spreading of the solutions of \eqref{e:main} 
when the initial density $u_0$ is localized.

To present our results, we recall the known results for the 
FKPP equation, which is one of the simplest models for the spreading of a population.  It is given by
\begin{equation}\label{e:FKPP}
u_t = u_{xx} + f(u),
\end{equation}
where $f$ satisfies $f(0) = f(1) = 0$, $f' \leq f'(0) = 1$, and $f > 0$ on $(0,1)$.  The typical choice of $f$ is $u(1-u)$; however, other choices arise naturally through connections to branching processes~\cite[eqn~(2a)]{Bramson78}.  We note that the model~\eqref{e:main} arises in connection with the nonlocal FKPP equation (see~\cite{BouinHendersonRyzhik} for the connection to the nonlocal FKPP equation and~\cite{Britton} for the introduction of the nonlocal FKPP equation, which has been studied extensively since).

Starting from to the seminal papers of Fisher \cite{Fisher}, Kolmogorov, Petrovskii and Piskunov \cite{KPP} and Aronson-Weinberger \cite{AronsonWeinberger}, an enormous body of literature has developed studying the existence, qualitative behavior, and stability of traveling wave solutions of~\eqref{e:FKPP}.  In particular, the equation~\eqref{e:FKPP} with $f(u) = u(1-u)$ admits (minimal speed) traveling wave solutions $U$ with speed $2$ such that $U(\xi) \sim \xi e^{-\xi}$ as $\xi\to\infty$~\cite{Hamel}.  In addition, going back to the work of Bramson~\cite{Bramson78,Bramson83} and Uchiyama~\cite{Uchiyama}, it is known that if $u_0$  is compactly supported, 
the front of~$u$ is located at 
\begin{equation}\label{sep2702}
	X(t) =2t - \frac32\log t + s_0,
\end{equation}
where $s_0$ is a shift depending only on $u_0$, in the sense that $u(t,\cdot + X(t)) \to U$.  Interestingly,  the ``delay'' between the traveling wave and $u$, given by $(3/2)\log(t)$ is $o(t)$ but tends to infinity. 
These proofs have been simplified in recent years~\cite{HNRR_Short,Roberts}, with some
refinements to the expansion of $X$ by Nolen, Roquejoffre, and Ryzhik~\cite{NRR1,NRR2} and Graham~\cite{graham2017precise}.

Some of the above results can be extended to more general $f$ as long as it is sufficiently regular near $0$.  In particular, the arguments in~\cite{Hamel,HNRR_Short} require explicitly that there is some $\delta>0$ such that $f$ is $C^{1,\delta}$.  On the other hand, preliminary results in~\cite{BouinHendersonRyzhik} make clear the surprising fact that~\eqref{sep2702} does not hold without this regularity assumption.  In this paper, we are interested in understanding how the regularity of $f$, measured through the parameter $r$, affects the decay of traveling waves and the correction between the traveling wave position $\bar X(t) = 2t$ and the position $X(t)$ of the front for localized initial data.


\subsection*{Setup and main results} 
For the initial condition, we assume\footnote{Notice that, up to shifting the system, we lose no generality from the case where $u_0$ is zero to the right of some $x_0 \in \R$.} that $u_0$ is localized to the left of the origin:
\begin{equation}\label{e:u_0}
    0 \leq u_0 \leq 1,\qquad 
    u_0(x) = 0 \text{ for all } x \geq 0,
    \quad \text{ and } \quad
    \liminf_{x\to-\infty} u_0(x) > 0.
\end{equation}
We expect our results to hold when $u_0$ has ``fast" exponential decay, that is, $u_0(x) e^{(1+\epsilon)x} \to 0$ as~$x\to 0$ for some $\epsilon>0$, rather than compactly
supported on the right, but in this work, we opt for a simpler setting in the interest of clarity.  We recall that the front position asymptotics for 
solutions of (\ref{e:FKPP}) with $u_0$ that has a sufficiently slow exponential 
tail on the right is 
different from (\ref{sep2702}), see~\cite{Bramson78, Bramson83}, and, hence, the ``fast'' exponential decay condition above cannot be weakened further.

Our first result is about the behavior at infinity of critical (minimal speed) traveling wave solutions of \eqref{e:main}; that is, solutions of
\be\label{e.traveling_wave}
- 2 U' =  U'' + U \left(1 -A\left(\log\left(\frac{\nu}{U}\right) \right)^{1-r}\right)
\ee
with the far-field conditions $U(-\infty) = 1$ and $U(+\infty) = 0$.  In the study of~\eqref{e:FKPP}, critical traveling waves attract any initial data $u_0$ satisfying~\eqref{e:u_0} and, at a technical level, understanding their profile at $+\infty$~\cite{Hamel} is crucial in quantifying the position of level sets~\cite{HNRR_Short}. 

\begin{theorem}\label{p:decayTW}
Let $U$ be a traveling wave solution of~\eqref{e:main}, that is solving \eqref{e.traveling_wave}.  Then $U$ is monotonically decreasing, less than $1$, and has the following behavior at infinity:
\begin{enumerate}[(i)]
  \setlength\itemsep{0em}
	\item If $r>3$, then $\displaystyle \lim_{\xi\to\infty} U(\xi) / (\xi e^{-\xi}) = \kappa$ for some $\kappa>0$.
	\item If $r=3$, then $\displaystyle \lim_{\xi\to\infty} U(\xi) / (\xi^\alpha e^{-\xi}) = \kappa$ for some $\kappa>0$ and where $\alpha$ is the unique solution of $\alpha (\alpha-1) = A$ such that $\alpha > 1$.
	\item If $r \in \, (1,3)$, then
	\[
		\lim_{\xi\to\infty} \frac{\log( e^\xi U(\xi))}{\xi^\frac{3-r}{2}}
			= \frac{2\sqrt A}{3-r}.
	\]
\end{enumerate}
\end{theorem}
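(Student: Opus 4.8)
\emph{The plan.} Monotonicity of $U$ and the bounds $0<U<1$ are standard for monostable reactions (sliding, using the KPP inequality $f(u):=u\bigl(1-A(\log(\nu/u))^{1-r}\bigr)\le f'(0)u=u$), so I concentrate on the behaviour at $+\infty$. Everything rests on the substitution $w(\xi):=e^{\xi}U(\xi)$: since $\partial_\xi^2+2\partial_\xi+1=e^{-\xi}\partial_\xi^2 e^{\xi}$, equation \eqref{e.traveling_wave} becomes
\[
 w''=c(\xi)\,w,\qquad c(\xi):=A\bigl(\log(\nu/U)\bigr)^{1-r}=A\bigl(\xi+\log\nu-\log w\bigr)^{1-r}>0 .
\]
The first and conceptually decisive step is to exclude fast or borderline‑exponential decay, i.e.\ to prove $w(\xi)\to+\infty$. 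Here I use \emph{only} the KPP inequality $f(u)\le u$, which gives $U''+2U'+U\ge 0$; equivalently $\psi:=e^{\xi}U$ is convex on $\mathbb R$. As $U(-\infty)=1$ forces $\psi(\xi)\to 0$ when $\xi\to-\infty$, while $\psi>0$, convexity first yields $\psi'\ge 0$ (otherwise $\psi\to+\infty$ at $-\infty$); so $\psi$ is nondecreasing, and if it were also bounded then $\psi'$, being nondecreasing and nonnegative, would have to tend to $0$ (otherwise $\psi\to+\infty$) and hence vanish identically, giving $\psi\equiv 0$, which is impossible. Thus $w=\psi$ is unbounded, and a convex positive function on a half‑line that is unbounded must tend to $+\infty$ and be eventually increasing.

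\emph{Taming the self‑referential coefficient.} The obstruction in $w''=c(\xi)w$ is that $c$ depends on $w$ through $\log w$. With $\phi:=\log w$ one has the Riccati identity $\phi''+(\phi')^2=c(\xi)$, and since $U(\xi)\to 0$ we have $c(\xi)=1-f(U)/U\to 0$. Feeding this in, wherever $\phi'>\delta$ (and $\xi$ is large depending on $\delta$) one gets $\phi''=c-(\phi')^2<0$, so $\phi'$ cannot persist above $\delta$; a trapping argument gives $\limsup_{\xi\to\infty}\phi'\le 0$, and together with $\phi'=w'/w>0$ eventually this yields $\phi'\to 0$ and $\log w=o(\xi)$. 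Hence $\log(\nu/U)=\xi(1+o(1))$ and
\[
 c(\xi)=A\,\xi^{1-r}(1+o(1)).
\]
A short bootstrap refines this where needed: comparing $w''=cw$ with the model equation of the regime (below) gives $\log w=O(\xi^{(3-r)/2})$ if $r\in(1,3)$ and $\log w=O(\log\xi)$ if $r=3$, upgrading the remainder to $c(\xi)=A\xi^{1-r}\bigl(1+O(\xi^{(1-r)/2})\bigr)$, resp.\ $c(\xi)=A\xi^{-2}\bigl(1+O(\log\xi/\xi)\bigr)$.

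\emph{The three regimes.} With $w''=c(\xi)w$ and these estimates, each case is a classical asymptotic‑integration problem in which the only extra input is $w\to+\infty$, which selects the dominant solution. When $r>3$, $\int^{\infty}\xi\,c(\xi)\,d\xi<\infty$, so $w''=cw$ has solutions $w_1\to 1$ and $w_2\sim\xi$; since $w\to\infty$, $w(\xi)/\xi\to\kappa>0$, i.e.\ $U(\xi)/(\xi e^{-\xi})\to\kappa$. When $r=3$ the equation is an integrable perturbation of the Euler equation $v''=A\xi^{-2}v$; after the change of variable $\xi=e^{s}$ it reads $\ddot w-\dot w-Aw=O(s\,e^{-s})\,w$, an $L^{1}$‑perturbation of a constant‑coefficient equation with roots $\alpha_{\pm}=\tfrac12(1\pm\sqrt{1+4A})$ (so $\alpha_{\pm}(\alpha_{\pm}-1)=A$), hence $w\sim\kappa\,\xi^{\alpha_{\pm}}$ and $w\to\infty$ forces the branch $\alpha_{+}>1$, i.e.\ $U(\xi)/(\xi^{\alpha}e^{-\xi})\to\kappa$. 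When $r\in(1,3)$ the Liouville--Green (WKB) approximation applies — the error‑control integral behaves like $\int^{\infty}\xi^{(r-5)/2}\,d\xi<\infty$, precisely because $r<3$ — so there is a fundamental system $w_{\pm}=c^{-1/4}\exp\!\bigl(\pm\!\int^{\xi}\!\sqrt{c(s)}\,ds\bigr)(1+o(1))$; from $c=A\xi^{1-r}(1+o(1))$ one gets $\int^{\xi}\sqrt{c(s)}\,ds=\tfrac{2\sqrt A}{3-r}\xi^{(3-r)/2}(1+o(1))$ and $\log c^{-1/4}=O(\log\xi)=o(\xi^{(3-r)/2})$, while $w\to\infty$ forces a nonzero $w_{+}$‑component; hence $\log(e^{\xi}U)=\log w=\tfrac{2\sqrt A}{3-r}\xi^{(3-r)/2}(1+o(1))$, as claimed.

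\emph{Main obstacle.} Two points are delicate. First, the self‑referentiality of $c$ forces a careful ordering of the argument: the soft conclusion $w\to\infty$, then $\log w=o(\xi)$, must come before — and feed — the precise analysis. Second, $r=3$ is a genuine borderline at which the Liouville--Green method breaks down (its error‑control integral diverges logarithmically), so that case must be routed through the perturbation theory of the Euler equation. I also expect the extraction of the \emph{exact} constants $\kappa$ and $\tfrac{2\sqrt A}{3-r}$ — as opposed to mere two‑sided bounds — to be the part that really needs the sharp asymptotic‑integration statements (Levinson's theorem; Hartman's theory of principal/nonprincipal solutions; Liouville--Green with explicit error control) rather than a crude sub/super‑solution sandwich; once the leading rate is known, explicit barriers such as $e^{-\xi}\exp\bigl(\tfrac{2\sqrt A}{3-r}\xi^{(3-r)/2}\pm\varepsilon\,\xi^{\gamma}\bigr)$ (and their analogues in the other regimes), compared with $U$ via the KPP structure, give a more PDE‑flavoured alternative for the bounds.
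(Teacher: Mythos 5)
Your proposal is correct in outline, but it follows a genuinely different route from the paper's. Both arguments start identically: pass to $w=e^{\xi}U$ (the paper's $Q=\nu e^{\xi}U$ in~\eqref{e.c7132}), so that $w''=c(\xi)w$ with $c=A(\log(\nu/U))^{1-r}>0$, and first extract the soft facts by convexity and the Riccati identity — $w'\ge 0$, $w$ increasing, $w'/w\to 0$, hence $\log w=o(\xi)$ and $c(\xi)=A\xi^{1-r}(1+o(1))$; your additional observation that $w\to+\infty$ (which the paper does not state explicitly, since in case (i) it works with $Q'$ and in case (ii) it rules out $\kappa=0$ by a separate contradiction) is what lets you select the dominant branch uniformly in all three regimes. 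After that the methods diverge: you invoke classical asymptotic-integration theory for the linear equation with the now-known coefficient — the $\int^{\infty}\xi\,c\,d\xi<\infty$ theorem for $r>3$, Levinson's theorem after $\xi=e^{s}$ for $r=3$, and Liouville--Green with error control for $r\in(1,3)$ — whereas the paper gives self-contained, elementary proofs: a direct integration-by-parts plus Gr\"onwall bound showing $Q'$ is bounded (case (i)); the substitution $Q=\xi^{\alpha}M$ followed by an integral-equation/Gr\"onwall bootstrap and a contradiction argument for $\kappa>0$ (case (ii)); and a hands-on Riccati comparison showing $V=(\log Q)'$ tracks $f=\sqrt{A}(\xi-\log Q)^{(1-r)/2}$ (case (iii)), which is in effect leading-order WKB done by hand. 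What your route buys is conceptual economy and uniformity (one mechanism, ``dominant solution selected by $w\to\infty$'', across the three regimes, with the $r=3$ borderline correctly rerouted through the Euler/Levinson picture); what the paper's route buys is that it never needs to quote asymptotic-integration theorems and, in particular, never needs error-control hypotheses beyond what the Riccati structure supplies directly.

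One point you should make explicit if you write this up: because $c$ is self-referential, the Liouville--Green error-control integral (and the $O(se^{-s})$ remainder at $r=3$) involves $c'$ and $c''$, hence $W'=(\log w)'$ and $W''$, not just the size of $\log w$; your stated bootstrap only refines $\log w$ itself. This is not a fatal gap — the same Riccati trapping argument that gives $W'\to0$ upgrades to $W'=O(\xi^{(1-r)/2})$ (compare $W'$ with the barrier $M\xi^{(1-r)/2}$ using $W''=c-(W')^{2}$), and then $W''=O(\xi^{1-r})$, which makes the error-control integrand $O(\xi^{(r-5)/2})$ and integrable precisely for $r<3$ — but without it the application of the LG theorem to $c(\xi)$ is not justified. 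The paper's case (iii) sidesteps this by bounding $(x-W)^{-(3-r)/2}(1-W')$ directly once the one-sided comparison between $V$ and $f$ is in hand.
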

More importantly, this theorem shows a transition between the standard FKPP regime for $r>3$ and regime $r\leq 3$ for which the nonlinearity plays a crucial role in the decay of the waves. This transition is new in the literature up to our knowledge.

Let us note that if $r \in \, (1,3)$, we obtain a two-term asymptotic expansion of $\log U$:
\[
	\log U(\xi) = - \xi + \frac{2\sqrt{A}}{3-r} \xi^\frac{3-r}{2} + o\left( \xi^\frac{3-r}{2}\right).
\]
Some lower order terms in this expansion are expected, with the number of nontrivial terms increasing as $r$ gets closer to $1$.  The expansion in \Cref{p:decayTW}, however, is sufficient for our purposes.   It is also worth pointing out that some error estimates in \Cref{p:decayTW} could be derived from our analysis. Since it is not our purpose here, we have decided not to pursue this matter further.

The main result of this paper is the following.
\begin{theorem}\label{thm:main_delay}
Suppose that $u$ satisfies~\eqref{e:main} and~\eqref{e:u_0}. 
\begin{enumerate}[(i)]
	\item If $r>3$, then the solution $u$ propagates with a logarithmic delay:
        \begin{align}\label{e:log_delay_below}
        		&\lim_{L\to\infty}\liminf_{t\to\infty} \inf_{x \leq -L} u\Big(t,2t - \frac{3}2 \log t +x\Big)
		= 1  \quad\text{and}\\
\label{e:log_delay_above}
	&\lim_{L\to\infty} \limsup_{t\to\infty} \sup_{x\geq L}\, u\Big(t,2t - \frac{3}{2}\log t + x\Big) = 0.
\end{align}
	\item If $ r= 3$, then the solution $u$ propagates with larger logarithmic delay: defining $\alpha> 1$ to be the solution of $\alpha(\alpha-1) = A$,
	\begin{align}\label{e:weak_log_delay_below}
		&\lim_{L\to\infty}\liminf_{t\to\infty} \inf_{x \leq -L} u\Big(t,2t - \frac{2\alpha+1}{2} \log t + x\Big)
			= 1, \quad\text{and}\\
	\label{e:weak_log_delay_above}
       		&\lim_{x\to\infty} \limsup_{t\to\infty} \, u\Big(t,2t - \frac{2\alpha+1}{2}\log(t) + x\Big) = 0.
    \end{align}
	\item If $r \in (1,3)$, then the delay is algebraic:  there exists $\Theta_r > 0$, such that
\begin{align}\label{e:alg_delay_below}
        &\lim_{t\to\infty}\inf_{x \leq 2t - \Theta_r A^\gamma t^\beta + o(t^\beta)} u(t,x) 
        		= 1  \quad\text{and}\\
	\label{e:alg_delay_above}
        		&\lim_{t\to\infty} \sup_{x \geq 2t - \Theta_r A^\gamma t^\beta - o(t^\beta)} u(t,x) = 0.
    \end{align}
with $\gamma := \frac{2}{1+r}$ and $\beta := \frac{3-r}{1+r}$.    
\end{enumerate}
\end{theorem}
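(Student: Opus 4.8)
I would prove \Cref{thm:main_delay} by trapping the front of $u$ between explicit super- and sub-solutions in the moving frame $\xi=x-2t$, with the leading-edge asymptotics of the critical traveling wave $U$ from \Cref{p:decayTW} as the essential input. Write $\tilde u(t,\xi):=u(t,\xi+2t)$, so $\tilde u_t=\tilde u_{\xi\xi}+2\tilde u_\xi+f(\tilde u)$ with $f(s)=s\bigl(1-A(\log(\nu/s))^{1-r}\bigr)$, and factor $\tilde u=U(\xi)\,q(t,\xi)$. A computation that uses \eqref{e.traveling_wave} to cancel the zeroth-order terms, together with the identity $\log(\nu/(Uq))=\log(\nu/U)-\log q$, shows that in the leading-edge region $q$ satisfies, to leading order,
\[
q_t = q_{\xi\xi} + 2\Bigl(\tfrac{U'}{U}+1\Bigr)q_\xi \;-\; A(r-1)\bigl(\log(\nu/U)\bigr)^{-r}(\log q)\,q ,
\]
and by \Cref{p:decayTW} the drift is $\tfrac{U'}{U}+1\sim\alpha/\xi$ with $\alpha=1$ when $r>3$, with $\alpha(\alpha-1)=A$ (so $\alpha>1$) when $r=3$, and $\tfrac{U'}{U}+1\sim\sqrt A\,\xi^{(1-r)/2}$ when $r\in(1,3)$. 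I would also use two soft facts: $0\le u\le1$ by comparison with the constant steady states; and $u$ spreads with asymptotic speed exactly $2$, which is classical since $f(s)/s\to1$ as $s\to0$ makes $f$ of KPP type near $0$. In each regime the upper bound on the front comes from a super-solution built from the heat/Bessel profile described below, and the lower bound — typically the more delicate half — from a matching cut-off sub-solution, with interior parabolic estimates providing compactness.

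\textbf{Logarithmic regimes $r\ge3$.} When $r>3$, $U(\xi)\sim\kappa\xi e^{-\xi}$ exactly as for classical Fisher-KPP, so $q_{\xi\xi}+\tfrac2\xi q_\xi$ is the radial heat operator in dimension $3$ and the log-singular term only contributes the short-range (in $\xi$) potential $\asymp\xi^{-r}\log q$. Running the Bramson / Hamel--Nolen--Roquejoffre--Ryzhik scheme, one sandwiches $q$ between multiples of the Dirichlet profile $\xi\,t^{-3/2}e^{-\xi^2/(4t)}$ on $\{\xi\ge0\}$; the extra potential is integrable and lower order, so it does not change the $t^{-3/2}$ prefactor, and matching $q$ to the traveling-wave tail $U\sim\kappa\xi e^{-\xi}$ at the inner edge of the leading edge forces the order-$1$ level sets of $u=Uq$ to sit at $\xi\approx-\tfrac32\log t$, giving \eqref{e:log_delay_below}--\eqref{e:log_delay_above}. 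When $r=3$, $\tfrac{U'}{U}+1\sim\alpha/\xi$ with $\alpha(\alpha-1)=A$, so $q_{\xi\xi}+\tfrac{2\alpha}{\xi}q_\xi$ is the radial Laplacian in the (generally non-integer) dimension $2\alpha+1$ — equivalently, $w:=e^{\xi}\tilde u$ solves a heat equation with the inverse-square potential $A\xi^{-2}$, whose Bessel heat kernel has index $\alpha-\tfrac12=\tfrac12\sqrt{1+4A}$. In either form the relevant self-similar solution behaves like $\xi^{\alpha}t^{-(2\alpha+1)/2}e^{-\xi^2/(4t)}$, so the log-singularity simply replaces the heat-kernel exponent $\tfrac32$ by $\tfrac{2\alpha+1}{2}$; the same matching argument now places the level sets of $u$ at $\xi\approx-\tfrac{2\alpha+1}{2}\log t$, yielding \eqref{e:weak_log_delay_below}--\eqref{e:weak_log_delay_above}. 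The care needed in this case is in two-sided Bessel heat-kernel bounds uniform up to the free boundary near $\{\xi\approx0\}$, in the error from replacing $\log(\nu/U)$ by its leading term, and in quantitative control of the lower-order $(\log q)q$ term inside the barriers.

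\textbf{The singular regime $r\in(1,3)$.} Here several scales genuinely interact. After factoring out $U$, the operator for $q$ carries the non-integrable drift $2\sqrt A\,\xi^{(1-r)/2}$ toward $+\infty$ (equivalently, $w=e^{\xi}\tilde u$ solves a heat equation with the slowly decaying potential $A\xi^{1-r}$, with $-2<1-r<0$). A drifted Brownian particle released near the edge reaches distance $\xi_q(t)\asymp A^{1/(1+r)}t^{2/(1+r)}=A^{\gamma/2}t^{\gamma}$ by time $t$, which sets the leading-edge width. Heuristically — via $\dot X\approx2\sqrt{1-A\,\xi_q^{1-r}}$, i.e. the leading edge feels the reduced growth rate $1-A\xi^{1-r}$ — the front lags this edge by the accumulated deficit $\dot\delta(t)\approx A\,\xi_q(t)^{1-r}\asymp A^{2/(1+r)}t^{2(1-r)/(1+r)}$, which integrates to $\delta(t)\approx\Theta_r A^{\gamma}t^{\beta}$ with $\gamma=\tfrac2{1+r}$, $\beta=\tfrac{3-r}{1+r}$. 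This is consistent with \Cref{p:decayTW}(iii), where $\log U(\xi)+\xi\sim\tfrac{2\sqrt A}{3-r}\xi^{(3-r)/2}$ is itself of order $A^{\gamma}t^{\beta}$ at $\xi\sim\xi_q$, and with the logarithmic cases as $r\to3^-$ (then $\beta\to0$) and with a genuine first-order slowdown of the front as $r\to1^+$ (then $\beta\to1$).

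\textbf{The main obstacle.} The hard part — and, I expect, the technical heart of the paper — is turning this heuristic into rigorous matched super- and sub-solutions across the three scales: the WKB/diffusive leading edge of width $\sim t^{\gamma}$, the $O(1)$-wide reaction zone, and the bulk. One must propagate the barriers over the whole interval $[1,t]$ and control the lower-order WKB corrections precisely enough to pin the constant $\Theta_r$ while keeping the total error $o(t^{\beta})$, as \eqref{e:alg_delay_below}--\eqref{e:alg_delay_above} demand. I expect $\Theta_r$ to be characterized as the value of an explicit one-dimensional convex variational problem of Freidlin--Wentzell type, obtained under the parabolic rescaling $x=2t-t^{\beta}z$ with $x-2t\sim t^{\gamma}$ (whose Euler--Lagrange equation is a self-similar profile ODE), and the principal difficulty to be that neither the nonlinear $(\log q)q$ term nor the errors in matching the three regions are uniformly negligible on all scales simultaneously; consequently the trapping has to be carried out scale by scale, with carefully tuned cut-offs and a bootstrap between the scales, rather than by a single global comparison.
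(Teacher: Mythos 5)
Your outline is a faithful heuristic roadmap — the factorization $\tilde u = Uq$, the effective drift $2(U'/U+1)$, the dimension-$(2\alpha+1)$/inverse-square-potential picture for $r=3$, and the large-deviations scaling $x\sim t^{\gamma}$, delay $\sim A^{\gamma}t^{\beta}$ for $r\in(1,3)$ all match the structure of the actual argument — but at the two points where the theorem is genuinely hard your proposal stops at an expectation rather than an argument, and that is a real gap. For $r\in(1,3)$ you never produce the constant $\Theta_r$: you ``expect'' a Freidlin--Wentzell variational characterization and concede that the matched, scale-by-scale trapping ``has to be carried out''. The paper's essential idea, which is missing from your plan, is to pass to $\varphi(\tau,y)=-\tau^{-1}\log v(\tau^{1/\beta},y\tau^{\gamma/\beta})$, obtain the limiting first-order equation \eqref{e:c11311}, and select $\Theta$ as the critical shift \eqref{e:critical_theta} at which the minus-root trajectory $\phi_\theta$ of \eqref{e:phi} (the root forced by the traveling-wave asymptotics of \Cref{p:decayTW}(iii) near $y=0$) touches the barrier $\Gamma$ of \eqref{e:Gamma} tangentially at $\bar y$; switching to the plus root there produces $\Phi\sim y^{2}/4$, the heat-kernel behavior required far ahead of the front. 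This selection mechanism is exactly what guarantees that the supersolution $\exp\{R_0+\tau(\Theta-\phi)\}$ and the subsolution $\epsilon y^{p}\exp\{h(\tau)-\tau\Phi\}$ (\Cref{l:supersolution_small_r}, \Cref{l.subsolution_sr}) pin down the \emph{same} constant with $o(t^{\beta})$ errors; no limit passage or bootstrap between scales is needed, only two explicit global barriers plus a traveling-wave pull-back near the front, and the bounds of \Cref{p:phi} and \Cref{p:Phi} on $\phi',\phi'',\Phi',\Phi''$ that make the barrier computations close. Without the tangency construction, your upper and lower barriers have no mechanism to agree to leading order, and the statement ``there exists $\Theta_r$'' is not reached.

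For $r=3$ the gap is quantitative rather than conceptual: your sandwich by $\xi^{\alpha}t^{-(2\alpha+1)/2}e^{-\xi^{2}/(4t)}$ is the right shape (and the lower bound is indeed obtained in the paper by essentially this explicit subsolution, followed by a traveling-wave pull-back — in fact this half is the easier one, contrary to your expectation), but the upper bound with the exact prefactor $t^{-(2\alpha+1)/2}$ cannot be had from ``two-sided Bessel heat-kernel bounds'' alone, because the true potential is $A(x+\log(1/\tilde u))^{-2}$, not $Ax^{-2}$, and the discrepancy $\mathcal{E}$ is only slowly decaying. The paper handles this by passing to self-similar variables, symmetrizing to the operator $M_A$ with principal eigenfunction $Q(y)=Z^{-1}y^{\alpha}e^{-y^{2}/8}$ (\Cref{l:eigenvector_r3}), and running a spectral-gap/perturbation argument (\Cref{l:curly_E} through \Cref{lem:Q_tildezeta}) to show the $Q$-mode dominates, which is what produces \Cref{p:supersoln_r3} and hence \eqref{e:weak_log_delay_above}. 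Your proposal flags these error terms as ``care needed'' but supplies no mechanism to control them uniformly in time, which is precisely where the proof lives.
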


Some comments are now in order. First, the case $r>3$ as already been established in our earlier work with Ryzhik \cite{BouinHendersonRyzhik} and will, thus, not be discussed in the sequel.  We include it in \Cref{thm:main_delay} for the sake of completeness.

Second, each delay coefficient has a heuristic meaning behind it.  As described in the introduction of~\cite{BouinHendersonRyzhik}, the main length scale on which the nonlinearity acts is $x \sim t^\gamma$.  When $r > 3$, then $\gamma < 1/2$ and the diffusive length scale $x\sim t^{1/2}$ dominates, allowing us to ignore the nonlinearity.  In this case, the heuristics are as in the standard case when $f(u) = u(1-u)$: roughly, following~\cite[Section 1]{HNRR_Short}, the $3/2$ coefficient arises because the time decay of the (Dirichlet) heat kernel on the half-line $[0,\infty)$ is $t^{-3/2}$.  When $r=3$, $\gamma = 1/2$ and both scales balance and the nonlinearity is relevant.  In this case, the coefficient $\alpha + 1/2$ comes from the fact that solutions of $h_t = h_{xx} - A x^{-2} h$ have time decay $t^{- \alpha - 1/2}$ (note that, from \Cref{p:decayTW}, we expect $\log(\nu/h)^{1-r} \sim x^{-2}$ when $r=3$).  When $r\in (1,3)$, the scale of the nonlinearity is larger than the scale of the diffusion.  Thus, to understand the dynamics, it is required to understand the large deviations rate function appearing on that scale.  For a more in-depth heuristic discussion of this, we refer to the outset of \Cref{s.lw3}.

Finally, one of the main interests of \Cref{thm:main_delay} is in the fact that all leading order delay constants can be characterized explicitly (when $r\geq 3)$ or implicitly (when $r\in (1,3)$).  In particular, the constant $\Theta_r$, when $r\in (1,3)$, can be easily computed numerically, which is not possible without the reduction we discuss below.  Indeed, in the proof of \Cref{thm:main_delay}.(iii), we show that $\Theta_r$ is given by $\phi(0)$ where $\phi$ solves
\be\label{e.c8251}
\begin{split}
	&\phi' = \frac{\gamma}{2} y - \sqrt{ \frac{\gamma^2 y^2}{4} + A y^{1-r} - \beta \phi}\\
	&\phi\left( \bar y_r \right)
		= \frac{\gamma^2 \bar y_r^2}{4\beta} + \frac{A \bar y_r^{1-r}}{\beta}
			\qquad\text{where } \bar y_r = (1 + r)^\gamma.
\end{split}
\ee
(see \Cref{r:Theta} for this characterization and the beginning of \Cref{s.lw3} for a heuristic explanation of the above characterization).  While, to our knowledge,~\eqref{e.c8251} does not have an explicit solution, it is easy to compute numerically.  Indeed, we see in \Cref{fig:theta}, that $\Theta_r$ appears to be increasing in $r$, with $\Theta_r \searrow 1$ as $r\searrow 1$ and $\Theta_r \sim \alpha/(3-r)$ for some $\alpha>0$ as $r \nearrow 3$.  These asymptotics are an interesting open question that we do not settle here.

\begin{figure}
	\begin{overpic}[width=.49\linewidth]{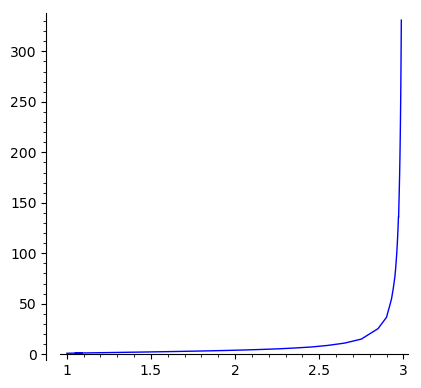}
		\put(3,87){$\Theta_r$}
		\put(50,-1.5){$r$}
	\end{overpic}\
	\begin{overpic}[width=.49\linewidth]{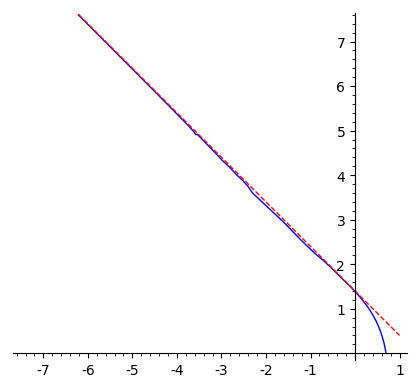}
		\put(87,75){\rotatebox{90}{$\log \Theta_r$}}
		\put(40,-1.5){$\log(3-r)$}
	\end{overpic}\
\caption{The approximate value of $\Theta_r$ as a function of $r \in (1,3)$.  From both plots we see that $\Theta_r$ seems to be increasing in $r$ and tends to $1$ on the left and $\infty$ on the right.  In the second plot, the dashed red curve is a line with slope $-1$.  Hence, we see that $\log\Theta_r \sim -\log(3-r)$ as $r\nearrow 3$.}
\label{fig:theta} 
\end{figure}

\subsection*{Related works} 
The history of the precise asymptotics of the FKPP equation with nonlinearity $u(1-u)$ was given above.  As said briefly earlier, this paper is a companion to \cite{BouinHendersonRyzhik}, in which we had studied the issue of the Bramson correction in the non-local Fisher-KPP equation.  However, the estimates in \cite{BouinHendersonRyzhik} were not explicit and were far from sharp when $r\in(1,3]$.  The connection to the nonlocal FKPP equation is one of the main motivations of the current work in addition to the intrinsically interesting issue of the affect of the particular choice of $f$ on the dynamics of solutions to~\eqref{e:FKPP}.  

We note that explicit algebraic delays are not common in the literature. As far as algebraic delays are concerned, we point to the work of Fang and Zeitouni~\cite{FangZeitouni}, Maillard and Zeitouni~\cite{MaillardZeitouni}, and Nolen, Ryzhik, and Roquejoffre~\cite{NolenRyzhikRoquejoffre} where a Fisher-KPP model with a diffusivity that changes slowly in time was studied, and a delay, roughly, of order $t^{1/3}$ was obtained. However, both the set-up and the mechanism for the large delay are quite different in these papers than in the present work.  Finally, we also mention the recent paper of Ducrot~\cite{Ducrot} in which he constructs a class of non-linearities $f(x,u)$, which tend to $u(1-u)$ as $|x| \to \infty$, such that if the nonlinearity $u(1-u)$ in~(\ref{e:FKPP}) is replaced by~$f(x,u)$, then the front is at $2t - \lambda \log(t)$ for any $\lambda \geq 3/2$.

A bit further from the present setting are the following related threads of research: branching random walk~\cite{AddarioBerryReed, Aidekon, BrunetDerridaBRW, CernyDrewitz}, branching Brownian motion with absorption~\cite{ABBS, BBS11, BBS13, BBHR, HendersonVS}, systems of reaction-diffusion equations~\cite{ChenEtAl, DucrotGiletti}, and inhomogeneous environments and nonlocal interactions~\cite{ABBP, BHR_Log_Delay, BouinHendersonRyzhik, Ducrot, HNRR_Periodic, Penington}.  There is also a higher dimensional analogue to Bramson's result~\cite{Ducrot, Gartner, RRN}.  It is also important to point out the important contributions in the applied literature~\cite{BerestyckiBrunetDerrida, BrunetDerridaFKPP, EbertVanSaarloos}.  Due to the huge amount of research in this area, the previous list is unfortunately quite incomplete.

\subsection*{Organization and notation}

The rest of the paper is organized as follows. The following \Cref{sec:shape_waves} contains the proof of \Cref{p:decayTW}. In \Cref{s.r3}, we present the proof of the case $r=3$: \Cref{thm:main_delay}.(ii). Finally, the case $r\in (1,3)$, that is the proof of \Cref{thm:main_delay}.(iii) is in \Cref{s.lw3}. Note that to keep this introduction relatively short, we have chosen to present all important heuristics at the beginning of each case's dedicated subsection.  Finally, the last section, \Cref{s:technical}, is devoted to a technical lemma, giving rough upper and lower bounds of~\eqref{e:main}, that is used throughout.

We use $C$ to be a positive universal constant, changing line-by-line, that depends only on $r$ and $A$, unless otherwise indicated. Letting $\R_+ = [0,\infty)$, we denote the $L^2(\R_+)$ inner product using the bra-ket notation; that is, for $f,g\in L^2(\R_+)$, we write
\[
	\inner{f}{g}
		= \int_0^\infty f(x) g(x) dx.
\]

We use $\cdot_+$ to denote the positive part operator; that is, for $x\in \R$, we define $x_+ = \max\{0,x\}$.

\subsubsection*{Acknowledgements}

CH was partially supported by NSF grant DMS-2003110.

\section{The shape of traveling waves - Proof of \Cref{p:decayTW}}
\label{sec:shape_waves}

\begin{proof}
The fact that $U$ is less than $1$ follows from the maximum principle, and the fact that $U$ is decreasing follows directly from standard arguments, see \cite{AronsonWeinberger}. 
We remove an exponential factor, letting $Q = \nu e^\xi U$.  Then $Q$ satisfies
\be\label{e.c7132}
	Q'' 
		= A \left( \xi - \log(Q) \right)^{1-r} Q.
\ee
Note that $\xi - \log(Q) = \log(\nu/U) \geq 0$ since $\nu > 1 > U$ and, hence, $Q''> 0$.  It follows that
\be\label{e.c7131}
	Q(\xi) \geq Q'(\xi_0)(\xi-\xi_0)+Q(\xi_0)
		\qquad\text{ for any } \xi, \xi_0 \in \R.
\ee
Since $U = Qe^{-\xi}$ is bounded as $\xi \to-\infty$, we have that $Q(\xi) \to 0$ as $\xi\to-\infty$.  It follows that the derivative $Q'$ is necessarily nonnegative everywhere; indeed, if $Q'(\xi_0) < 0$ for some $\xi_0$, then the inequality~\eqref{e.c7131} yields a contradiction as $\xi \to -\infty$. As a consequence, $Q$ and $Q'$ are strictly increasing.

Moreover, since $Qe^{-\xi}$ goes to $0$ at $+\infty$, we have that $\xi - \log(Q) \to \infty$ as $\xi \to +\infty$.  Using~\eqref{e.traveling_wave}, it follows that $Q'' / Q \to 0$ as $\xi \to +\infty$.  Using this with the fact that $Q'/Q \geq 0$ and satisfies
\[
	\left(\frac{Q'}{Q}\right)'
		= \frac{Q''}{Q} - \left( \frac{Q'}{Q}\right)^2,
\]
we deduce that
\be\label{e.c7148}
	\lim_{\xi\to\infty} \log(Q)'
		= \lim_{\xi\to\infty} \frac{Q'}{Q}
		= 0.
\ee.


{\bf Case (i): $r>3.$} Using the observations above, fix $\eps \in (0,1)$ and $\xi_0$ sufficiently close to $-\infty$ such that $\vert \log(Q) \vert < \eps \xi$ for all $\xi$ larger than $\xi_0$. Integrating~\eqref{e.c7132}, we find, for any $\xi>\xi_0$,
\be\label{e.c7133}
\begin{split}
Q'(\xi) &\leq Q'(\xi_0) + A (1-\eps)^{1-r} \int_{\xi_0}^\xi x^{1-r} Q(x) \, dx \\
	&= Q'(\xi_0) + A (1-\eps)^{1-r} \left( \frac{\xi^{2-r}}{2-r} Q(x)-  \frac{\xi_0^{2-r}}{2-r} Q(\xi_0) - \int_{\xi_0}^\xi \frac{x^{2-r}}{2-r} Q'(x) \, dx \right) \\
	&\leq Q'(\xi_0) +  A (1-\eps)^{1-r}  \frac{\xi_0^{2-r}}{r-2} Q(\xi_0) +  A (1-\eps)^{1-r} \int_{\xi_0}^\xi \frac{x^{2-r}}{r-2} Q'(x) \, dx,\\
	&= Q'(\xi_0) +  A (1-\eps)^{1-r} \int_{\xi_0}^\xi \frac{x^{2-r}}{r-2} Q'(x) \, dx,
\end{split}
\ee
where we have used the non-negativity of $Q$ and that $r > 3> 2$. Using the Gr\"onwall lemma in its integral form, we find, for all $\xi \geq \xi_0$,
\begin{equation*}
	Q'(\xi) \leq Q'(\xi_0) \exp\left\{  \int_{\xi_0}^\xi A (1-\eps)^{1-r}  \frac{x^{2-r}}{r-2}\, dx\right\}
		\leq Q'(\xi_0) \exp\left\{ \frac{A(1-\eps)^{1-r}}{(r-3)(r-2)} \xi_0^{3-r}\right\}.
\end{equation*}
The last inequality follows from explicitly computing the integral in the exponential and using that $r> 3$.  As a consequence, $Q'$ is bounded above as $\xi\to\infty$.

To summarize, $Q'$ is positive, increasing, and bounded above.  It follows that $Q'(\xi) \to \kappa$ for some $\kappa > 0$ as $\xi \to \infty$.  This yields the result.


%
%
%

{\bf Case (ii): $r = 3$.}  
%
We define $\alpha$ to be the solution of $\alpha(\alpha-1) = A$ such that $\alpha > 1$. Writing, for positive $\xi$, $Q=\xi^\alpha M$ and using~\eqref{e.c7132}, we find
\be\label{e.c7143}
	M'' + \frac{2\alpha}{\xi} M'
		= \frac{A}{\xi^2} \left[ \left(1- \frac{\log(Q)}{\xi} \right)^{-2} -1 \right] M.
\ee
We now prove that $M$ is convergent. Fix $\xi_0>0$.  Notice that the left hand side above is equal to $\xi^{-2\alpha} (M' \xi^{2\alpha})'$.  Hence, multiplying both sides by $\xi^{2\alpha}$ and integrating gives, for all $\xi\geq \xi_0$,
\be\label{e.c7144}
	M'(\xi)
		= \xi_0^{2\alpha} M'(\xi_0) \xi^{-2\alpha} + \xi^{-2\alpha}\int_{\xi_0}^\xi A \left[ \left(1-  \xi'^{-1}\log(Q)(\xi') \right)^{-2} -1 \right] \xi'^{2(\alpha-1)} M \, d\xi'.
\ee
Recalling that $\alpha > 1$ and integrating once again yields
\be\label{e.c7146}
\begin{split}
M(\xi)
	&= M(\xi_0) + \xi_0^{2\alpha} M'(\xi_0) \left( \frac{\xi_0^{1-2\alpha}}{2\alpha-1}- \frac{\xi^{1-2\alpha}}{2\alpha-1}\right)\\
		&\qquad + \int_{\xi_0}^\xi x^{-2\alpha}\int_{\xi_0}^x A \left[ \left(1-  \xi'^{-1}\log(Q) \right)^{-2} -1 \right] \xi'^{2(\alpha-1)} M \, d\xi' \, dx,\\
	&\leq K_0 - \frac{\xi^{1-2\alpha}}{2\alpha-1}\int_{\xi_0}^\xi A \left[ \left(1-  \xi'^{-1}\log(Q) \right)^{-2} -1 \right] \xi'^{2(\alpha-1)} M \, d\xi'  \\
		&\qquad + \int_{\xi_0}^\xi \frac{\xi'^{1-2\alpha}}{2\alpha-1} A \left[ \left(1-  \xi'^{-1}\log(Q) \right)^{-2} -1 \right] \xi^{2(\alpha-1)} M \, d\xi' \\
	&\leq K_0
		+ \frac{A}{2\alpha-1} \int_{\xi_0}^\xi \xi'^{-1}\left[ \left(1-  \xi'^{-1}\log(Q) \right)^{-2} -1 \right] M \, d\xi',
\end{split}
\ee
where
\[
	K_0 = M(\xi_0) + \frac{\xi_0 M'(\xi_0)}{2\alpha-1}
		< \infty
\]
and we have used that $M$ and $\log(Q)$ are positive and thus
\[
	M\left(\left(1-  \xi'^{-1}\log(Q) \right)^{-2} -1\right) \geq 0.
\]
By the Gr\"onwall lemma, one gets that
\be\label{e.c7142}
	M(\xi)
		\leq K_0 \exp\left\{ \frac{A}{2\alpha-1} \int_{\xi_0}^\xi  \left[ \left(1-  \xi'^{-1}\log(Q) \right)^{-2} -1 \right] \xi'^{-1} \, d\xi'\right\}.
\ee

We use~\eqref{e.c7142} twice, where we improve our bound each time we use it.  First, we obtain a preliminary bound to show that $\xi'^{-1} \log Q$ tends to zero.  From~\eqref{e.c7148}, up to increasing $\xi_0$, we have that $|\log Q(\xi)| \leq \xi / 2$ for all $\xi \geq \xi_0$.  Hence,~\eqref{e.c7142} becomes
\[
	M(\xi)
		\leq K_0 \exp\left\{ \frac{3A}{2\alpha-1} \int_{\xi_0}^\xi \xi'^{-1} \, d\xi'\right\}
		\leq K_0 \left(\frac{\xi}{\xi_0}\right)^\frac{3A}{2\alpha-1}.
\]
In particular, this yields $C>0$ such that, if $\xi\geq \xi_0$,
\[
	\log Q(\xi)
		= \log( \xi^\alpha M(\xi))
		\leq C \log(\xi).
\]

We plug this bound back into~\eqref{e.c7142} in order to obtain a uniform bound on $M$.  Indeed, using a Taylor expansion, we find, up to increasing $\xi_0$,
\be\label{e.c7145}
	\left(1-  \xi^{-1}\log Q(\xi) \right)^{-2} -1 \leq  \frac{4C \log(\xi)}{\xi}.
\ee
Plugging this into~\eqref{e.c7142}, we obtain the improved bound:
\[
	M(\xi)
		\leq K_0 \exp\left\{ \frac{4AC}{2\alpha-1} \int_{\xi_0}^\xi  \frac{\log(\xi)}{\xi^2} \, d\xi'\right\}.
\]
Thus, $M$ is uniformly bounded for $\xi \geq \xi_0$.  Let $M_\infty = \|M\|_{L^\infty(\xi_0,\infty)}$.

From~\eqref{e.c7144} and~\eqref{e.c7145}, we see
\be\label{e.c7147}
	\left|\xi^{2\alpha} M'(\xi)\right|
		\leq \left|M'(\xi_0) \xi_0^{2\alpha}\right|
			+ 4 AC M_\infty \int_{\xi_0}^\xi \frac{\log(\xi)}{\xi^2}\, d\xi.
\ee
It follows from the above and the fact that $\alpha > 1$, that $M'$ is integrable as $\xi\to\infty$.  We conclude that $M(\xi) \to \kappa$ as $\xi\to\infty$ for some $\kappa \geq 0$.

We are finished if we show that $\kappa > 0$.  To this end, we argue by contradiction, assuming that $\kappa = 0$.  Taking $\xi \to\infty$ in the first line of~\eqref{e.c7146}, we find, for all $\xi_0>0$,
\[
	M(\xi_0)
		\leq - \frac{\xi_0 M'(\xi_0)}{2\alpha -1}.
\]
Including~\eqref{e.c7147} in the above yields, for some $C>0$ and all $\xi_0$ sufficiently large,
\[
	M(\xi_0)
		\leq C\xi_0^{1 - 2\alpha}
\]
Returning to $Q$, we deduce that $Q(\xi_0) \to 0$ as $\xi_0 \to\infty$, which contradicts the fact that $Q$ is increasing and everywhere positive.  Thus, we conclude that $\kappa>0$, which finishes the proof.


%
%

{\bf Case (iii): $r \in (1,3)$.}    
In this case, we expect sub-exponential growth at infinity. As a consequence, we define $W= \log(Q)$.  Notice that $W>0$ for all $\xi \gg 1$ and $W'>0$ for all $\xi$ since $Q$, $Q'$, and $Q''$ are positive. Then $W$ satisfies
\begin{equation*}
W'' + \vert W' \vert^2 = A(\xi- W)^{1-r},
\end{equation*}
where we know that $\lim_{\xi\to\infty} W(\xi)/\xi = 0$ by~\eqref{e.c7148}.

Let $V = W'$.  This satisfies
\be\label{e.c7149}
V' + \vert V \vert^2 = f^2,
\ee
where $f(\xi) = A^\frac12(\xi-W)^{\frac{1-r}{2}}$. From the above, it is heuristically clear that, as $\xi\to\infty$, $V \sim f$.  We aim to show this.

First, we claim that $V\geq f$ for all $\xi$ sufficiently large.  To prove this, first notice that $f' < 0$ for $\xi$ sufficiently large by~\eqref{e.c7148}.   Then, whenever $V < f$, $V$ is increasing.  Since $V >0$ and $f\to 0$ as $\xi\to\infty$, it follows that there exists $\xi_0 > 0$ such that $V(\xi_0) > f(\xi_0)$ and $f' < 0$ on $[\xi_0,\infty)$.  Let $\xi_1 > \xi_0$ be the first time that $V(\xi_1) = f(\xi_1)$.  If no such $\xi_1$ exists, we are finished.  Otherwise, we have that $V'(\xi_1) \leq f'(\xi_1)$ since $\xi_1$ is the location of a minimum of $V - f$ on $[\xi_0,\xi_1]$.  However, by the choice of $\xi_1$ and~\eqref{e.c7149}, $V'(\xi_1) = 0$ and $f'(\xi_1) < 0$.  This is a contradiction.  We conclude that $V\leq f$ for all $\xi$ sufficiently large.

Let $Y = \frac{V}{f} -1$.  Then $Y$ satisfies
\begin{equation*}
	Y' 
		=  - Y \left( (2+Y) f  - \frac{f'}{f} \right)  -  \frac{f'}{f}.
\end{equation*}
By the paragraph above, we find $\xi_0$ such that $Y$ is nonnegative on $[\xi_0,\infty)$.  Then, on this domain,
\begin{equation*}
	Y'
		\leq - Y \left( 2f  - \frac{f'}{f} \right)  -  \frac{f'}{f},
\end{equation*}
Solving this differential inequality yields, for $\xi \geq \xi_0$,
\[
	\begin{split}
		Y(\xi)
			&\leq \exp\left\{ - \int_{\xi_0}^\xi \left(2f - \frac{f'}{f}\right)\, dx\right\}\left( Y(\xi_0)
				- \int_{\xi_0}^\xi \frac{f'}{f} \exp\left\{\int_{\xi_0}^{x} \left(2f - \frac{f'}{f}\right)\, dx'\right\} dx\right)\\
			&= \frac{f(\xi)}{f(\xi_0)} \exp\left\{ - \int_{\xi_0}^\xi 2f\, dx\right\}\left( Y(\xi_0)
				- f(\xi_0) \int_{\xi_0}^\xi \frac{f'(x)}{f(x)^2} \exp\left\{\int_{\xi_0}^{x} 2f\, dx'\right\} dx\right)
	\end{split}
\]

By increasing $\xi_0$, we have $0 \leq W \leq \xi/2$ on $[\xi_0,\infty)$.  Hence, letting $\beta_0 = 2^{-\frac{1-r}{2}}$,
\be\label{e.f_approx}
	\sqrt A \beta_0 \xi^\frac{1-r}{2}
		\leq f(\xi)
		\leq \sqrt A \xi^\frac{1-r}{2}
		\qquad \text{ for all } \xi \geq \xi_0.
\ee
Plugging this into the above (recalling that $f' \leq 0$), we find
\[\begin{split}
	Y(\xi)
			&\leq \frac{f(\xi)Y(\xi_0)}{f(\xi_0)} \exp\left\{ - 2 \sqrt A \beta_0 \int_{\xi_0}^\xi x^\frac{1-r}{2} \, dx\right\}\\
			&\qquad
				- f(\xi) \int_{\xi_0}^\xi \left(\frac{1}{f(x)}\right)' \exp\left\{-2 \sqrt A \beta_0\int_x^\xi x'^{\frac{1-r}{2}}\, dx'\right\} dx\\
			&= \frac{f(\xi)Y(\xi_0)}{f(\xi_0)} \exp\left\{ - \frac{4 \sqrt A \beta_0}{3-r} \left( \xi^\frac{3-r}{2} - \xi_0^\frac{3-r}{2}\right)\right\}\\
				&\qquad + f(\xi) \int_{\xi_0}^\xi \frac{r-1}{2\sqrt A} (x - W)^{- \frac{3-r}{2}} (1 - W') \exp\left\{- \frac{4 \sqrt A \beta_0}{3-r} \left( \xi^\frac{3-r}{2} - x^\frac{3-r}{2}\right)\right\} dx.
\end{split}\]
Further increasing $\xi_0$ if necessary, we have $(x - W)^{-\frac{3-r}{2}} (1 - W') \leq \beta_0^{-1} x^{-\frac{3-r}{2}}$.  Hence, we find
\[\begin{split}
	Y(\xi)
		&\leq \frac{f(\xi)Y(\xi_0)}{f(\xi_0)} \exp\left\{ - \frac{4 \sqrt A \beta_0}{3-r} \left( \xi^\frac{3-r}{2} - \xi_0^\frac{3-r}{2}\right)\right\}\\
			&\qquad  + f(\xi) \int_{\xi_0}^\xi \frac{r-1}{2\sqrt A \beta_0} x^{- \frac{3-r}{2}} \exp\left\{- \frac{4 \sqrt A \beta_0}{3-r} \left( \xi^\frac{3-r}{2} - x^\frac{3-r}{2}\right)\right\} dx.
\end{split}\]
The first term clearly tends to zero.  Hence, we focus on the second term.  Changing variables and using~\eqref{e.f_approx} yields
\[\begin{split}
	f(\xi) \int_{\xi_0}^\xi &\frac{r-1}{2\sqrt A \beta_0} x^{- \frac{3-r}{2}} \exp\left\{- \frac{4 \sqrt A \beta_0}{3-r} \left( \xi^\frac{3-r}{2} - x^\frac{3-r}{2}\right)\right\} dx\\
		&\leq \frac{r-1}{\beta_0} \int_{\frac{\xi_0}{\xi}}^1 u^{- \frac{3-r}{2}} \exp\left\{- \frac{4 \sqrt A \beta_0}{3-r} \xi^\frac{3-r}{2} \left( 1 - u^\frac{3-r}{2}\right)\right\} du.
\end{split}\]
Applying the Lebesgue Dominated Convergence Theorem, we conclude that the left hand side tends to zero.  Putting together all estimates above, we find that
\be\label{e.c7151}
	\lim_{\xi\to\infty} Y(\xi) = 0.
\ee

We now show how to conclude.  Fix any $\epsilon>0$ and find $\xi_0$ such that if $\xi \geq \xi_0$, then
\[
	\left|V(\xi) - \sqrt A \xi^\frac{1-r}{2}\right|
		\leq \epsilon \xi^\frac{1-r}{2}.
\]
This follows from~\eqref{e.c7151}, the definition of $f$, and the fact that $W(\xi) / \xi \to 0$ as $\xi\to\infty$.  Then, for any $\xi \geq \xi_0$,
\[\begin{split}
	\left| W(\xi) - \frac{2 \sqrt A}{3-r} \xi^\frac{3-r}{2}\right|
		& = \left|W(\xi_0) + \int_{\xi_0}^\xi (V(x) - \sqrt A x^\frac{1-r}{2}) dx \right|\\
		&\leq W(\xi_0) + \eps \int_{\xi_0}^\xi x^\frac{1-r}{2} dx
		\leq W(\xi_0) + \frac{2 \eps}{3-r} \xi^\frac{3-r}{2}.
\end{split}\]
The result follows from the above.  This concludes the proof.
\end{proof}

\section{The case $r=3$: \Cref{thm:main_delay}.(ii)}\label{s.r3}

Throughout this section $r=3$ always, even when not explicitly stated.  The proof proceeds in two main parts: the lower bound and then the upper bound on the delay term.  Before embarking on the proof, we state a weak estimate of $u$, obtained by approximating with solutions of the heat equation, that is required to control behavior for small times.  This is proved in~\Cref{s:technical}.
\begin{lemma}\label{l.heat_bounds}
	Suppose that $u: [0,\infty)\times\R \to [0,1]$ satisfies
	\be\label{e:hb}
		u_t = u_{xx} + f(u),
	\ee
	where $f$ enjoys the bounds $0 \leq f(u) \leq u$ for all $u\in [0,1]$.  If $u_0$ satisfies~\eqref{e:u_0}, then there is a universal constant $C>0$ such that, for all $t\geq 1$ and $x\in \R$
	\[
		\frac{\sqrt t}{C (x_++\sqrt t)} e^{- \frac{x_+^2}{4t} - \frac{C x_+}{t}}
			\leq u(t, x)
			\leq \frac{C\sqrt t}{x_+ +\sqrt t} e^{t - \frac{x_+^2}{4t}}.
	\]
\end{lemma}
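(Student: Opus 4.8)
The plan is to bound $u$ from above and below by explicit solutions of the heat equation on the half-line $[0,\infty)$ with Dirichlet conditions, for which the time-decay $t^{-1/2}$ near the boundary and the Gaussian tail are classical. For the upper bound, I would first use the crude inequality $f(u)\le u$ to get that $v(t,x) := e^{-t}u(t,x)$ is a subsolution of the heat equation $v_t = v_{xx}$. Since $u_0$ is supported in $(-\infty,0]$ and bounded by $1$, I would compare $v$ on the half-line $x\ge 0$ with the solution $w$ of the heat equation on $\{x\ge 0\}$ with Dirichlet boundary data $w(t,0)=1$ (which dominates $v(t,0)=e^{-t}u(t,0)\le 1$) and initial data $w(0,\cdot)=0$ on $\R_+$ (which is $\le v(0,\cdot)$). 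By the maximum principle $v\le w$ on $\R_+$, and $w$ has the explicit representation $w(t,x) = \mathrm{erfc}\big(x/(2\sqrt t)\big)$, essentially, whose standard asymptotics give $w(t,x)\le C\sqrt{t}/(x+\sqrt t)\, e^{-x^2/(4t)}$ for $t\ge 1$. Undoing the substitution, $u(t,x)\le e^t w(t,x)$, which is the claimed upper bound (with $x$ replaced by $x_+$, the bound being trivial for $x\le 0$ since $u\le 1 \le C\sqrt t e^t$ there).

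For the lower bound I cannot simply drop $f$, since $f\ge 0$ only gives $u$ a supersolution of the heat equation, which is the wrong direction. Instead the standard trick (as in \cite{HNRR_Short}) is to note that $u$ is a supersolution of $u_t=u_{xx}$, so it dominates the solution $\underline u$ of the heat equation on all of $\R$ with initial data $u_0$; by the hypothesis $\liminf_{x\to-\infty}u_0>0$ there are $\theta>0$ and $x_*<0$ with $u_0\ge \theta\mathds 1_{(-\infty,x_*]}$, and then $\underline u(t,x)\ge \theta\, \mathrm{erfc}\big((x-x_*)/(2\sqrt t)\big)/2$. For $t\ge 1$ and $x$ fixed this already gives a lower bound like $c\sqrt t/(x_++\sqrt t)\,e^{-x_+^2/(4t)}$ times an additional $e^{-Cx_+/t}$-type factor coming from replacing $(x-x_*)^2$ by $x_+^2$ and absorbing the cross term $x_*x/t$; here one uses $-(x-x_*)^2/(4t)\ge -x^2/(4t) - |x_*|x/(2t) - x_*^2/(4t)$, the last term being an $O(1)$ constant absorbable into $C$, and the middle term producing exactly the $-Cx_+/t$ in the exponent. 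This handles $x\ge 0$; for $x\le 0$, $u(t,x)\ge u(t,0)$ by monotonicity considerations — or, more safely, one just notes the lower bound as stated only needs to be checked for $x\ge 0$ because the factor $\sqrt t/(x_++\sqrt t)\le 1$ and $u\ge$ (a constant) on any fixed left region, so one picks up $x_+$ automatically.

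The main obstacle is bookkeeping rather than conceptual: matching the precise form of the prefactor $\sqrt t/(x_+ +\sqrt t)$ and the exponential corrections $e^{\pm Cx_+/t}$ with what the complementary error function actually delivers, uniformly for all $t\ge 1$ and all $x\in\R$ (not just large $x$). Concretely, one needs the two-sided estimate $c\,\tfrac{1}{1+s}e^{-s^2} \le \mathrm{erfc}(s) \le C\,\tfrac{1}{1+s}e^{-s^2}$ for $s\ge 0$ with $s = x_+/(2\sqrt t)$, and then careful but elementary manipulation to pass from the variable $(x-x_*)/(2\sqrt t)$ in the lower bound back to $x_+/(2\sqrt t)$, at the cost of the stated $e^{-Cx_+/t}$ factor. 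None of this is deep, but it is the only place where one must be genuinely careful; the use of the comparison principle in both directions is routine given $0\le f(u)\le u$.
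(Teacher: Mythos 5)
Your proposal is correct and follows essentially the same route as the paper: both directions come from the comparison principle using $0\le f(u)\le u$ (the heat solution with data $u_0$ as a subsolution for the lower bound, $e^t$ times a heat solution/Dirichlet barrier as a supersolution for the upper bound), combined with standard Gaussian/erfc tail estimates, your use of the half-line Dirichlet barrier $\mathrm{erfc}\bigl(x/(2\sqrt t)\bigr)$ instead of the whole-line solution with data $\1_{(-\infty,0)}$ being a purely cosmetic difference. Only note that your remark that the supersolution property is ``the wrong direction'' for the lower bound is a slip — it is exactly what you then (correctly) use — and the appeal to monotonicity in $x$ is unjustified, though your fallback argument for $x\le 0$ (the claimed bound there is just a constant) is the right fix and is what the paper does.
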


Throughout this section, we refer to the shift constant $\alpha + 1/2$ often.  It is, thus, useful to have a shorthand for this constant.  As such, we let
\[
	S_A = \alpha + \frac{1}{2}.
\]

\subsection{Lower bound for the Bramson correction: proof of~\eqref{e:weak_log_delay_below}}
\label{sec:bramsonlower}

%

Miraculously, we can construct two subsolutions (one in the ``beyond the front'' regime and the other in the ``at and to the left of the front'' regime) that, when glued together, give us the sharp asymptotics.

Along the way, we obtain a slightly weaker lower bound on $u$ that holds for all $\geq 1$. This is useful in our proof of the upper bound in the sequel.  We state this as a proposition here; its proof is obtained along the way while establishing~\eqref{e:weak_log_delay_below}, below.

\begin{proposition}\label{p:lower_bound_r3}
	Given $u$ satisfying~\eqref{e:main}, there exists $C$, depending only on the initial data such that, for all $t\geq 1$,
	\begin{equation*}
		\frac{1}{C} \left( \1_{(-\infty,0]}(x) + (1 + x)^\alpha \1_{(0,\infty)}(x) e^{-x - \frac{x^2}{4t}}\right)
			\leq u(t,x + (2t - S_A\log(t))).
	\end{equation*}
	Here $\alpha>1$ is the parameter given in~\eqref{p:decayTW}.
\end{proposition}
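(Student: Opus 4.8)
I would prove the bound by constructing a single subsolution $\underline u$ of~\eqref{e:main}, obtained by gluing a ``plateau'' behind the front to a self-similar ``tip'' beyond it, and then running the comparison principle. Work in the frame $\zeta=x-2t$ and set $v=e^{x-2t}u$; as in the computation leading to~\eqref{e.c7132}, $v$ solves $v_t=v_{\zeta\zeta}-Av(\log\nu+\zeta-\log v)^{1-r}$, i.e.\ $v_t=v_{\zeta\zeta}-Av(\log(\nu/u))^{-2}$ for $r=3$. Since $u<1<\nu$ forces $\log(\nu/u)\ge\zeta$ on $\{\zeta>0\}$ whenever $v\le\nu$, one has $(\log(\nu/u))^{-2}\le\zeta^{-2}$ there; hence any nonnegative $\underline v\le\nu$ that is a (possibly viscosity) subsolution of the \emph{linear} problem $\underline v_t=\underline v_{\zeta\zeta}-A\zeta^{-2}\underline v$ on $\{\zeta>0\}$ and vanishes at $\zeta=0$ is automatically a subsolution of the $v$-equation lying below $v$ on the parabolic boundary. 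The key model profile: a short computation using exactly $\alpha(\alpha-1)=A$ shows that $g(\eta)=\eta^\alpha e^{-\eta^2/4}$ is the positive ground state of $-g''-\tfrac\eta2 g'+A\eta^{-2}g=\tfrac{\alpha+1}{2}g$ on $(0,\infty)$, so that $\underline v_*(t,\zeta)=t^{-S_A}\zeta_+^\alpha e^{-\zeta^2/(4t)}$ (note $S_A=\tfrac{\alpha+1}{2}+\tfrac\alpha2$) solves $\underline v_t=\underline v_{\zeta\zeta}-A\zeta^{-2}\underline v$ exactly and vanishes for $\zeta\le0$; because $\alpha>1$, $\zeta_+^\alpha\in C^1$ and $\zeta^{-2}\zeta_+^\alpha$ stays bounded at $\zeta=0$, so it is harmless there. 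In the coordinate $x+(2t-S_A\log t)$ one has $\zeta=x-S_A\log t$ and $e^{-\zeta}\underline v_*\asymp(1+x)^\alpha e^{-x-x^2/(4t)}$ for $x\gg\log t$ — this is the origin of both the shift $S_A=\alpha+\tfrac12$ and the weight $(1+x)^\alpha$.

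\textbf{The two pieces and the gluing.} Behind the front I would use the plateau $\underline u_{\mathrm p}(t,x)=\delta\,U(x-2t+S_A\log t+C_1)$: since $u\mapsto f(u)/u=1-A(\log(\nu/u))^{1-r}$ is nonincreasing, $f(\delta u)\ge\delta f(u)$ for $\delta\le1$, so $\delta U$ is a subsolution of the static FKPP flow; the extra drift $x\mapsto S_A\log t$ has derivative $S_A/t>0$, which, since $U'<0$, only improves the sub-inequality; by \Cref{p:decayTW}(ii), $\underline u_{\mathrm p}\asymp\delta$ for $x\le 2t-S_A\log t$ and $\underline u_{\mathrm p}\asymp\delta(1+\sigma)^\alpha e^{-\sigma}$, $\sigma:=x-2t+S_A\log t$, beyond. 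Beyond the front I would use the tip $\underline u_{\mathrm t}(t,x)=\delta' e^{-\zeta}(t-\tau)^{-S_A}\zeta_+^\alpha e^{-\zeta^2/(4(t-\tau))}$, which by the reduction above is a subsolution of~\eqref{e:main} once $\underline u_{\mathrm t}\le\nu$ (automatic for $\delta'$ small, all $t$). I glue them along a curve $\zeta=\zeta_j(t)$ on which $\underline u_{\mathrm p}=\underline u_{\mathrm t}$: choosing $\zeta_j(t)$ at a scale between $(t\log t)^{1/3}$ and $\sqrt t$ makes the kink concave (the plateau, $\asymp e^{-\zeta}$, is flatter there than the tip, $\asymp e^{-\zeta-\zeta^2/(4t)}$), so $\underline u=\underline u_{\mathrm p}\,\mathbf 1_{\{\zeta\le\zeta_j(t)\}}+\underline u_{\mathrm t}\,\mathbf 1_{\{\zeta>\zeta_j(t)\}}$ is a genuine (viscosity) subsolution of~\eqref{e:main}; the constants $\delta,\delta',C_1,\tau$ are fixed so that this matching is consistent.

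\textbf{Comparison and reading off.} Fix $t_0\ge1$ and verify $\underline u(t_0,\cdot)\le u(t_0,\cdot)$: on the plateau part this follows from \Cref{l.heat_bounds} and the classical fact that $u$ has already invaded at speed close to $2$, after taking $\delta$ small (and $C_1$ large, when $t_0$ is taken large, to absorb the $e^{-t_0}$ deficit of the heat lower bound near the front); on the tip part the shift $\tau>0$ makes the Gaussian of $\underline u_{\mathrm t}$ narrower than the super-Gaussian tail of $u(t_0,\cdot)$ furnished by \Cref{l.heat_bounds}, so $\underline u_{\mathrm t}(t_0,\cdot)$ sits below $u(t_0,\cdot)$. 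By the comparison principle — legitimate since $f\in C^1([0,1])$ — one gets $\underline u\le u$ for all $t\ge t_0$. Rewriting $\underline u$ in the coordinate $x+(2t-S_A\log t)$ yields the $\mathbf 1_{(-\infty,0]}(x)$ term from $\underline u_{\mathrm p}$ (as $U(C_1+x)\gtrsim1$ there) and the $(1+x)^\alpha e^{-x-x^2/(4t)}$ term from $\underline u_{\mathrm p}$ (for moderate $x$, using $e^{-x}\ge e^{-x-x^2/(4t)}$) and from $\underline u_{\mathrm t}$ (for large $x$); the range $1\le t\le t_0$ is absorbed into $C$ using \Cref{l.heat_bounds} and positivity/continuity of $u$ on compacts.

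\textbf{Main obstacle.} Everything above is routine except the gluing/far-field balancing, which is exactly the reason two pieces rather than one are needed: the tip $\underline u_{\mathrm t}$ must decay fast enough in $\zeta$ (its Gaussian narrower than that of $u$) to be dominated by $u$ at the initial time, yet slowly enough that, at later times, it still dominates $C^{-1}(1+x)^\alpha e^{-x-x^2/(4t)}$ for \emph{all} large $x$; simultaneously the crossover curve $\zeta_j(t)$ must make the kink concave. Tuning $\tau$, $\zeta_j(t)$ and the prefactors together so that all three requirements hold — and, if necessary, further truncating $\underline u_{\mathrm t}$ at a moving outer boundary placed in a region where $u$ is comparatively large — is the delicate part of the argument. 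The ground-state computation of the first paragraph is very short but is the sole source of the precise exponent $S_A=\alpha+\tfrac12$.
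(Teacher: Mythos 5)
Your key ingredients are the same as the paper's: the exact self‑similar profile $t^{-S_A}\zeta_+^\alpha e^{-\zeta^2/(4t)}$ coming from $\alpha(\alpha-1)=A$, the observation that the nonlinear potential is dominated by $A\zeta^{-2}$ on $\{\zeta>0\}$ so that a sub­solution of the linear problem works, the fitting at time $t=1$ via \Cref{l.heat_bounds} using a backward time shift to narrow the Gaussian, and a log‑shifted traveling wave behind the front whose drift term has the favorable sign. The genuine gap is in the assembly. You glue the plateau (left) to the tip (right) along a curve where they coincide and assert that the resulting \emph{concave} kink makes the glued function a viscosity subsolution. This is backwards. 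For a continuous, piecewise smooth function, $\underline u_{xx}$ carries a singular part $\bigl(\underline u_x(\zeta_j^+)-\underline u_x(\zeta_j^-)\bigr)\delta_{\zeta_j}$, so the inequality $\underline u_t-\underline u_{xx}-f(\underline u)\le 0$ across the interface requires the spatial derivative to jump \emph{up}, i.e.\ a convex kink; equivalently, near the junction the glued function must be the pointwise \emph{maximum} of the two pieces (max of subsolutions is a subsolution; the min, which is what your configuration produces, is not — compare $1-|x|$ for the heat equation, whose corner contributes $+2\delta_0$ to $u_t-u_{xx}$). At the junction scale you choose, between $(t\log t)^{1/3}$ and $\sqrt t$, the tip is strictly steeper than the plateau (as you yourself note), so the derivative jumps down and the glued function fails to be a subsolution precisely along the gluing curve.

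The obvious repairs do not close easily. Taking the pointwise maximum instead puts the slowly decaying plateau ($\asymp e^{-\zeta}$) on the outside for very large $\zeta$, where it cannot be placed below $u$ at the initial time, since \Cref{l.heat_bounds} only provides a Gaussian‑in‑$\zeta$ bound for $u$; one is then forced to truncate the comparison domain on the right and to control $u$ on that lateral boundary beforehand. That is exactly the paper's route, which avoids gluing altogether: first compare the tip subsolution on $\{x\ge 2t\}$ to obtain~\eqref{e:ss_c2}, then run a second comparison for the shifted wave on $\{x\le 2t+\sqrt t\}$, using~\eqref{e:ss_c2} to dominate the wave along the boundary $x=2t+\sqrt t$ (the paper uses a wave $U_\e$ for a slightly modified nonlinearity rather than your $\delta U$; either device is fine for the $1/C$ bound). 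Alternatively one could try to place the junction at or below the $(t\log t)^{1/3}$ scale, where the tip is flatter than the plateau so the corner has the correct convexity; but there the junction location is dictated by the prefactors (the pieces must actually be equal along the curve) and the two slopes agree to leading order, so the corner condition becomes delicate rather than automatic. As written, the single glued subsolution does not exist, and the proof needs the two‑domain comparison (or an equivalent boundary control) to go through.
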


We now prove the lower bound in \Cref{thm:main_delay}.(ii).

\begin{proof}[Proof of~\eqref{e:weak_log_delay_below}]

	We proceed in multiple steps outlined here.  First, we change to the moving frame corresponding to the above bound.  Then, we construct a subsolution of~\eqref{e:main} in the moving from $x\mapsto x+ 2t$.  After, we ``fit'' this under the function $u$ at a future time in order to get a bound for any $x \geq 2t$.  The final step is to bootstrap this up to a bound on $[2t-(\alpha+1/2) \log(t+1), 2t]$ using a shift of the traveling wave as a subsolution to ``pull back'' the previous bound.
	
	For the first step, let $u(t,x) = e^{- (x - 2t)} \tilde u(t,x- 2t)$.
	We see that
	\be\label{e.c7241}
		\tilde u_t = \tilde u_{xx} - A(x + \log(1/\tilde u))^{-2}.
	\ee
	
	The next step is to obtain a lower bound of $u$ on $[2t,\infty)$ via the construction of a subsolution of $\tilde u$.  To this end, let, for all $x\geq 0$ and $t \geq 1$,
	\[
		\underline u(t,x) = \e \frac{x^\alpha}{(t-1/2)^{S_A}} e^{-\frac{x^2}{4 (t-1/2)}},
	\] 
	where $\e>0$ is chosen such that $\underline u < 1$ for all $x \geq0$ and $t\geq 1$.  Recall that $S_A = \alpha + 1/2$.  Then
	\begin{equation}\label{e:ss_c1}
	\begin{split}
		\underline u_t - \underline u_{xx} + A(x  + \log(1/\underline u))^{-2}
			&=\frac{ \underline u}{x^2} \left[ \frac{Ax^2}{(x+\log(1/\underline u))^2} - \alpha(\alpha-1)\right]\\
			&= \frac{ A\underline u}{x^2} \left[\frac{x^2}{(x+\log(1/\underline u))^2} - 1\right],
	\end{split}
	\end{equation}
	where the second equality follows from the fact that, by definition, $\alpha(\alpha-1) = A$.  Since $\underline u < 1$, we have $(x + \log(1/\underline u))^2 > x^2$.  Thus
	\be\label{e.c8311}
		\underline u_t - \underline u_{xx} + A(x  + \log(1/\underline u))^{-2}
			\leq 0.
	\ee

	Using~\Cref{l.heat_bounds} we have that, for all $x\geq 0$,
	\[
		\tilde u(1,x)
			\geq \frac{e^x}{C} e^{- \frac{(x+ C + 2)^2}{4}}
			\geq \frac{1}{C} e^{- \frac{x^2}{4} - \frac{C^2}{4} - 1 - \frac{C x}{2} - C}.
	\]
	Then, up to decreasing $\e$ further, we find
	\[
		\underline u(1, x)
			= \e 2^{S_A} x^\alpha e^{- \frac{x^2}{2}}
			\leq \tilde u(1, x).
	\]
	Hence, recalling~\eqref{e.c8311}, the comparison principle implies that $\underline u(t,x) \leq \tilde u(t,x)$ for all $t\geq 1$ and $x\geq 0$.  This establishes that, for some $C>0$,
	\begin{equation}\label{e:ss_c2}
		u(t,x + 2t)
			\geq \frac{1}{C} \frac{x^\alpha}{t^{S_A}} e^{-x - \frac{x^2}{4(t-1/2)}}
				\qquad \text{ for all } t\geq 1, x \geq 0.
	\end{equation}
	
	We have a lower bound on $u$ on $[2t,\infty)$; however, we do not have control on $(-\infty,2t]$.  Let $U$ be the (speed 2 traveling wave) solution of
	\be\label{e.c7242}
		- 2U_\e' = U_\e'' +  U_\e\left(1 - A \log\left(\frac{\nu-\e}{U_\e}\right)^{-2}\right),
	\ee
	for any fixed $\e \in (0,\nu)$, satisfying $U_\e(+\infty) = 0$.  In this case, $U_\e(-\infty)$ satisfies $A \log((\nu-\e)/U_\e(-\infty))^{-2} = 1$ and is, thus, less than $1$.  In addition, by \Cref{p:decayTW}, $U_\e$ is decreasing.
	
	Fix $L\geq 0$ to be chosen. For all $t\geq 1$ and $x \geq 0$, let
	\[
		v(t,x)
			= U_\e\left(x - 2t + S_A \log(t)+L\right).
	\]
	A direct computation using~\eqref{e.c7242} yields
	\[
		v_t - v_{xx} - v\left(1 - A \log \left(\frac{\nu}{v}\right)^{1-r}\right)
			= \frac{S_A}{t} v_x
				+ A v \left(\log \left(\frac{\nu}{v}\right)^{-2}
					- \log \left(\frac{\nu-\e}{v}\right)^{-2} \right)
	\]
	Since $U_\e$ is decreasing, $v_x < 0$.  In addition, $\nu - \e >\nu$, so so parenthetical term above is negative as well.  It follows that $v$ is a subsolution of~\eqref{e:main}.
	
	We now show that, at a positive time $t_0$ to be determined, $v(t_0,\cdot)\leq u(t_0,\cdot)$ on $(-\infty,2t+\sqrt t]$.  First, we check the left hand boundary $x = -\infty$.  By standard theory~\cite{Xin_Book},
	\[
		\lim_{t\to\infty} \inf_{x\leq 0} u(t,x) = 1.
	\]
	Hence, there is $t_0 \geq 1$ sufficiently large that $u(t_0,x) \geq U(-\infty)$ for all $x \leq 0$.
	
	Next we check the right hand boundary $x = 2t$.  On the one hand, we have, by~\eqref{e:ss_c2} that
	\[
		u(t,2t + \sqrt t)
			\geq \frac{1}{C t^{\frac{\alpha + 1}{2}}} e^{-\sqrt t - \frac{1}{4(1 - (2\sqrt t)^{-1})}}.
	\]
	On the other hand, using \Cref{p:decayTW}, there is $\kappa>0$ such that, for all $t$ sufficiently large,
	\[\begin{split}
		v(t,2t + \sqrt t)
			&= U_\e(\sqrt t - S_A \log(t))
			\leq 2\kappa (\sqrt t - S_A \log(t)+L)^\alpha e^{- \left(\sqrt t + (\alpha + 1/2)\log(t) + L\right)}\\
			&= 2\kappa (\sqrt t - S_A \log(t)+L)^\alpha t^{- \alpha - \frac{1}{2}} e^{- \sqrt t - L}
			\leq 2\kappa \left(1+\frac{L}{\sqrt t}\right)^\alpha t^{- \alpha - \frac{1}{2}} e^{- \sqrt t - L}.
	\end{split}\]
	
	It follows from the work in the previous paragraph that, up to increasing $L$,
	\[
		u(t_0, 2t_0 + \sqrt{t_0})
			\geq v(t_0, 2 t_0 + \sqrt{t_0}).
	\]
	All arguments above show that $v\leq u$ on the parabolic boundary of $\{(t,x) : t \geq t_0, x \in (-\infty, 2t + \sqrt t)\}$ and $v$ is a subsolution of $u$ on this set.  We may, thus, apply the comparison principle to conclude that $v \leq u$ on $(-\infty, 2t + \sqrt t)$.  In particular, we find that
	\be\label{e.c7243}
		u(t, 2t - S_A \log(t) + x)
			\geq U_\e(x).
	\ee
	
	We note that, taking $\e$ sufficiently close to $\nu$ depending only on $\liminf_{x\to-\infty} u_0(x)$, we can take $t_0=1$ and all work above can easily be modified in this case to conclude~\eqref{e.c7243}.  In this case, the combination of~\eqref{e:ss_c2} and~\eqref{e.c7243} yield \Cref{p:lower_bound_r3}.

	Returning to the proof at hand, we use~\eqref{e.c7243} to conclude that, choosing $R_\e$ such that $U_\e(R_\e) = U_\e(-\infty) - \e$,
	\[
		\inf_{t \geq t_0, x\leq -R_\e} u(t,x + 2t - (\alpha+1/2)\log(t))
			\geq \inf_{x\leq -R_\e} U_\e(x)
			= U_\e(-R_\e)
			= U_\e(-\infty) - \e.
	\]
	Since $U_\e(-\infty) \to 1$ as $\e \to 0$, the result follows.  The proof is finished.
\end{proof}

\subsection{The upper bound for the Bramson correction when $r=3$}

\subsubsection{A supersolution and the proof of~\eqref{e:weak_log_delay_above}}

Our starting point is the lower bound \Cref{p:lower_bound_r3}.  We use this  in order to construct a sharp supersolution.  To do so we proceed in several steps, the first of which is to consider a new problem, of which $u$ is a sub-solution.  For $t_0$ to be determined, let
\[
	v(t,x)
		= \nu^{-1} e^x u(t, x + 2t - S_A\log(t/t_0)).
\]
Recall that $S_A = \alpha + 1/2$, where $\alpha>1$ solves $\alpha (\alpha-1) = A$.  Indeed, using \Cref{p:lower_bound_r3}, we have that, for all $(t,x)$ such that $t\geq 1$ and $x \geq 0$,
\begin{equation}\label{e:ss_c10}
\begin{split}
	0
		&= v_t + \frac{S_A}{t} (v_x - v) - v_{xx} + Av \log(1/ e^{-x} v)^{-2}\\
		&\leq v_t + \frac{S_A}{t} (v_x - v) - v_{xx} + A\frac{v}{ (x + \frac{x^2}{4t} - \alpha \log(1+x) + \log C_0 )^2}.
\end{split}
\end{equation}
Without loss of generality, we may assume that $C_0$ is sufficiently large that $(1+x)^\alpha e^{-x - x^2/4t}/ C_0 < 1/2$ and, hence, that
\[
	x + x^2/4t - \alpha \log(1+x) + \log C_0  > \log(2),
\]
so that last term in~\eqref{e:ss_c10} is well-defined.

\begin{proposition}\label{p:supersoln_r3}
	There exists $t_0, C_0 \geq 1$, $C>0$, and a smooth function $\overline v: [t_0,\infty)\times[0,\infty) \to \R_+$ that satisfy the following:
	\begin{enumerate}[(i)]
	\item $\overline v$ solves
	\[
		\overline v_t + \frac{S_A}{t} (\overline v_x - \overline v) - \overline v_{xx} + A\frac{\overline v}{ (x + \frac{x^2}{4t} - \alpha \log(1+x) + \log C_0 )^2}
			= 0,
	\]
	\item $\overline v(t_0,x) \geq v(t_0,x)$ for all $x \geq 1$,
	\item $\overline v(t,1) \geq v(t,1)$ for all $t \geq t_0$, and
	\item $\overline v(t,x) \leq C (1+x)^\alpha$ 
		for all $t\geq t_0$ and $x\geq 0$.
	\end{enumerate}
\end{proposition}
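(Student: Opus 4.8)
The plan is to let $\overline v$ be the (unique, by standard parabolic theory) solution of the linear equation in (i) posed on $\{t\ge t_0,\ x\ge1\}$, with initial datum $\overline v(t_0,\cdot)$ chosen large enough to give (ii) and with the constant Dirichlet datum $\overline v(t,1)\equiv\nu^{-1}e$ chosen to give (iii); then (i) holds by construction, so the entire content of the proposition is the $t$-uniform, $x$-polynomial bound (iv). For (iii), note that $u\le1$ forces $v(t,1)=\nu^{-1}e\,u\bigl(t,1+2t-S_A\log(t/t_0)\bigr)\le\nu^{-1}e$. For (ii), \Cref{l.heat_bounds} at time $t_0$ gives $u(t_0,x+2t_0)\le Ce^{-x^2/4t_0-x}$, hence $v(t_0,x)=\nu^{-1}e^x u(t_0,x+2t_0)\le Ce^{-x^2/4t_0}$, which is dominated by $\overline v(t_0,x):=C_1(x+\log C_0)^\alpha e^{-x^2/4t_0}$ once $C_1$ is large; the comparison principle then propagates (ii) forward in time, and (iv) will follow from a suitable global super-solution.

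The heuristic behind (iv) is the one recalled in the introduction: $h_t=h_{xx}-Ax^{-2}h$ is solved by $h(t,x)=t^{-S_A}x^\alpha e^{-x^2/4t}$, using $\alpha(\alpha-1)=A$ (which cancels the $x^{-2}$ part of $h_{xx}/h$) and $S_A=\alpha+\frac12$ (which matches the power of $t$). After the substitution $\overline v=t^{S_A}g$, the equation in (i) becomes $g_t+\frac{S_A}{t}g_x-g_{xx}+Az^{-2}g=0$ with $z:=x+\frac{x^2}{4t}-\alpha\log(1+x)+\log C_0$, i.e.\ a perturbation of the model by the drift $\frac{S_A}{t}g_x$ and by the replacement of $x^{-2}$ by $z^{-2}$. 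The shift $\log C_0$ inside $z$ plays two roles: near the Dirichlet boundary the potential $A/z^2$ is only of size $A/(\log C_0)^2$, essentially absent, so the profile must be nearly flat there rather than $\asymp x^\alpha$; and on the diffusive range $1\le x\lesssim\sqrt{t\log t}$, where $\frac{x^2}{4t}\le\alpha\log(1+x)$ and therefore $z\le x+\log C_0$, the genuine potential $A/z^2$ dominates the Bessel defect $A/(x+\log C_0)^2$ produced by a log-shifted power profile.

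For (iv), the building block is a barrier of the form $C_1(x+\log C_0)^\alpha e^{-x^2/4t}$: plugging it into the operator in (i) and using the two identities above, the Gaussian factor supplies a $+\frac1{2t}$ in $-\overline v_{xx}/\overline v$ that cancels the $-\frac{S_A}{t}$ of the reaction term up to the cross term $\frac{\alpha x}{t(x+\log C_0)}$, and the leftover errors are $O(1/t)$ (from the $\frac{S_A}{t}(\cdot)$ terms and the $t$-dependence of $z$), absorbed by taking $t_0$ large together with the nonnegative gap $A/z^2-A/(x+\log C_0)^2\ge0$ of the previous paragraph. This settles $1\le x\lesssim\sqrt{t\log t}$. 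For larger $x$ the sign of that gap flips and the Gaussian-drift term $-\frac{S_A x}{2t^2}$ enters, so there one must either let the Gaussian exponent in the barrier depend on $x$ (the computation dictates that it cross $1$ near $x\sim\sqrt t$) or split off the far field and paste in a separately tuned heat-type barrier; since $\overline v(t_0,\cdot)$ and $\overline v(t,1)$ are bounded, once (iv) holds on the diffusive range $\overline v$ is controlled on bounded $x$-sets uniformly in $t$, and extending to all $x\ge0$ reduces to a self-contained half-line estimate. Assembling a global super-solution $\overline w$ with $\overline w\le C(1+x)^\alpha$ and $\overline w\ge\overline v$ on the parabolic boundary, the comparison principle yields (iv), and the bound $\overline w\le C_1(\log C_0)^\alpha(1+x)^\alpha$ is immediate from the form of $\overline w$.

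\textbf{Main obstacle.} The difficult point is exactly this uniform verification of (iv). The inverse-square potential is singular at $x=0$ but genuinely active only on the diffusive scale, so the barrier profile must interpolate between being nearly flat at the Dirichlet boundary $x=1$ and behaving like $x^\alpha$ in the bulk — which dictates taking $C_0$ large; the time-dependent lower-order terms $\frac{S_A}{t}(\overline v_x-\overline v)$ then generate $O(1/t)$ errors of both signs whose leading parts cancel only because $S_A=\alpha+\frac12$ — which dictates taking $t_0$ large (after $C_0$); and the far field $x\gtrsim\sqrt t$, in particular $x\sim t$, requires a genuinely $x$-dependent (or piecewise) barrier, since there the potential gap has the wrong sign and plain diffusive decay is not by itself strong enough to close the inequality.
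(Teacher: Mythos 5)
Your reformulation is legitimate and even attractive: defining $\overline v$ as the solution of the linear problem on $\{x\ge 1\}$ with constant Dirichlet datum $\nu^{-1}e\ge v(t,1)$ and a large Gaussian initial datum makes (i)--(iii) trivial, so the whole proposition collapses to the uniform bound (iv). This is genuinely different from the paper, which never argues by barriers in the physical variables: it passes to self-similar variables $(\tau,y)=(\log(t/t_0),x/\sqrt t)$, conjugates to the self-adjoint operator $M_A$, and proves (iv) (and, in its version, the nontrivial lower bound (iii)) through the ground state $Q(y)=Z^{-1}y^\alpha e^{-y^2/8}$, the spectral gap of \Cref{l:eigenvector_r3}, the energy estimates of \Cref{lem:l2zetatilde}, and the $Q$-domination result \Cref{lem:Q_tildezeta}. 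The reason for that machinery is that (iv) encodes an \emph{exact} rate: the zeroth-order term $+\frac{S_A}{t}\overline v$ amplifies by $(t/t_0)^{S_A}$, and boundedness holds only because the Dirichlet flow with the inverse-square potential decays at exactly $t^{-S_A}$; any supersolution proving (iv) must reproduce this rate with no loss.

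The gap is that you never actually produce such a supersolution outside the bulk diffusive range, and that is where the exactness bites. Your computation with $\overline w=C_1(x+\log C_0)^\alpha e^{-x^2/4t}$ is correct as far as it goes: the residual is $\big(\frac{A}{z^2}-\frac{A}{(x+c)^2}\big)+\frac{\alpha(S_A-c)}{t(x+c)}-\frac{S_Ax}{2t^2}$ with $c=\log C_0$, and the log-enhanced potential gap (not ``$t_0$ large'' per se) beats the $O(1/t)$ errors pointwise — but only up to $x\lesssim \sqrt t\,(\log t)^{1/4}$, not up to $\sqrt{t\log t}$ as you claim, because the gap degenerates like $\frac{2A(\alpha\log(1+x)-x^2/4t)}{x^3}$ while the drift error $\frac{S_Ax}{2t^2}$ does not. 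Beyond that, through $x\sim\sqrt{t\log t}$ and on to $x\sim t$, the gap changes sign and every term in the residual is negative; the fixes you gesture at (an $x$-dependent Gaussian exponent, a time-shifted exponent $e^{-x^2/4(t-t_1)}$, a pasted ``heat-type barrier'') each repair only part of this range, and none is written down, nor is the gluing (a min of two pieces is a supersolution only where the active piece is one, so the crossover of the two barriers must be engineered to sit inside the overlap of their regions of validity, and the glued function must still obey $\le C(1+x)^\alpha$). The sentence ``extending to all $x\ge 0$ reduces to a self-contained half-line estimate'' is exactly the statement of the proposition again, not a proof of it. So as written the proposal establishes (iv) only on a strict sub-range and leaves the decisive intermediate/far-field construction — the analogue of what \Cref{lem:Q_tildezeta} delivers in the paper — undone; I would expect a correct barrier proof to be possible (in the spirit of the correction-factor supersolutions of~\cite{HNRR_Short} and~\cite{BouinHendersonRyzhik}), but it requires this missing construction, which is the actual content of the result.
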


Before proving the proposition, we show how to conclude the upper bound on $u$ from \Cref{p:supersoln_r3}.
\begin{proof}[Proof of~\eqref{e:weak_log_delay_above}]
	%
	We first show that $\overline v \geq v$ via the comparison principle.  In order to do this, we notice that $\overline v$ is a super-solution of the equation that $v$ is a sub-solution of by \Cref{p:supersoln_r3}.(i) and~\eqref{e:ss_c10}.  In addition, $\overline v \geq v$ on the parabolic boundary of $[t_0,\infty)\times[1,\infty)$ by \Cref{p:supersoln_r3}.(ii) and (iii).  Hence, the comparison principle and the definition of $v$ imply that
	\[
		\overline v(t,x) \geq v(t,x) = \nu^{-1}e^x u(t,x + 2t - S_A \log(t/t_0))
	\]
	for all $(t,x) \in [t_0,\infty)\times [1,\infty)$.
	
	Using now \Cref{p:supersoln_r3}.(iv), we have that
	\[
		u(t,x + 2t - S_A \log(t)) \leq \bar v(t,x) e^{-x}
			\leq C (1+x)^\alpha e^{-x} 
			\qquad \text{ for all $(t,x) \geq [t_0,\infty)\times [1,\infty)$.}
	\]
	The conclusion that
	\[
		\lim_{L\to\infty} \limsup_{t\to\infty} \sup_{x \geq 2t - S_A \log(t) + L} u(t,x) = 0
	\]
	is then clear, finishing the proof.
\end{proof}

\subsubsection{The proof of \Cref{p:supersoln_r3}}

In this case (i.e., $r=3$), the nonlinearity scales as diffusion; hence, we turn to the natural diffusive self-similar variables.  For $t_0>0$ to be chosen, let 
\begin{equation*}
	\tau = \log\Big(\frac{t}{t_0}\Big),
	\qquad \text{and}\qquad
	y = \frac{x}{\sqrt{t}}.
\end{equation*}
Roughly, we choose $\overline v(t,x) \sim \zeta(\tau,y)$, and the change in variables has the advantage that the spectrum of the resulting (spatial) differential operator is discrete, as we see below.  In addition, let $\e = 1/\sqrt{t_0}$.  It is convenient, in the sequel, to choose $t_0$ large and $\e = 1/\sqrt{t_0}$ is the term which ``feels'' the effect of this.

In these new variables we proceed by choosing $\zeta$ that solves
\begin{equation}\label{e:self_similar_zeta}
	\begin{aligned}
		&\zeta_\tau + L_A\zeta
			= \e S_A e^{-\frac{\tau}{2}} \zeta_y  + \left( \fdfrac{A}{y^{2}} - \fdfrac{A}{ ( y +\mathcal{E}(\tau,y) )^{2}} \right)\zeta, \qquad &&\text{ for } \tau > 0, y > 0,\\
		&\zeta(\tau,0) = 0,   &&\text{ for } \tau>0, \\
		&\zeta(0,y) = e^{-\frac{y^2}{8}}Q, &&\text{ for } y > 0,
	\end{aligned}
\end{equation}
for $Q$ that is defined below (see \Cref{l:eigenvector_r3}) and $\mathcal{E}$ and $L_A$ defined by:
\be\label{e.mcal_E}
\begin{split}
	&\mathcal{E}(\tau,y)
		= \e e^{-\tau/2} \left( \frac{y^2}{4}
			- \alpha\log(1 + \e^{-1} e^{\tau/2} y)
			+ \log(C_0)\right)
				\quad\text{ and}\\
	&L_A\zeta = - \zeta_{yy} - \frac{y}{2} \zeta_y  + \left(\frac{A}{y^{2}} - S_A \right)\zeta.
\end{split}
\ee
We note that $L_A$ arises via the change of variables from the operator $-\Delta - (S_A/t)  + A/x^2$.

Unfortunately, $L_A$ is not self-adjoint so it is not as amenable to spectral analysis.  Let $\tilde \zeta := e^{-\alpha\tau / 2} e^{y^2/8} \zeta$ and
\[
	M_A
		:= - \partial_y^2 + \left( \frac{y^2}{16} + \frac{1}{4} + \frac{A}{y^2} - \frac{1+\alpha}{2}\right).
\]
Notice that $M_A = e^{\frac{y^2}{8}} L_A e^{-\frac{y^2}{8}} - \frac{\alpha}{2}$.  Hence, from~\eqref{e:self_similar_zeta}, we find
\begin{equation}\label{e:tildezeta_r3}
	\begin{aligned}
		&\tilde \zeta_\tau + M_A\tilde \zeta
			= \e S_A e^{-\frac{\tau}{2}} \left(\tilde \zeta_y - \frac{y}{4} \tilde \zeta\right)  + \left(\fdfrac{A}{y^{2}} - \fdfrac{A}{ ( y +\mathcal{E}(\tau,y) )^{2}}\right)\tilde \zeta, \quad &&\text{ for } \tau > 0, y > 0,\\
		&\tilde\zeta(\tau,0) = 0,   &&\text{ for } \tau>0, \\
		&\tilde\zeta(0,y) = e^\frac{y^2}{8}\zeta_0 = Q, &&\text{ for } y > 0.
	\end{aligned}
\end{equation}

Throughout, it is useful to have an upper bound on $y^{-2} - (y + \mathcal{E})^{-2}$.  We state the bound here, noting that, while it is not optimal, it is sufficient for our purposes.
\begin{lemma}\label{l:curly_E}
	If $C_0$ is sufficiently large and $\e$ is sufficiently small, then
	\[
		\left|\frac{1}{y^2} - \frac{1}{(y+\cE)^2}\right|
			\leq C\min\left\{
					\frac{4}{y^2}, 
					\e^{1/2} e^{-\tau/4}\left(\frac{1}{y} + \frac{1}{y^3}\right)
				\right\}.
	\]
\end{lemma}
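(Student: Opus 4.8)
The plan is to reduce the lemma to two elementary facts: (a) the denominator does not degenerate, i.e.\ $y+\cE \geq y/2$ for all $y>0$ once $C_0$ is large; and (b) $|\cE| \leq C\,\e^{1/2}e^{-\tau/4}(1+y^2)$ for all $y>0$ once $C_0$ is large and $\e\leq 1$ (so that $\rho\leq 1$; recall $\tau\geq 0$ in the relevant range). It is convenient to set $\rho := \e^{1/2}e^{-\tau/4}$, so that $\rho^2 = \e e^{-\tau/2}$, $\rho^{-2}=\e^{-1}e^{\tau/2}$, $\rho\in(0,1]$, and
\[
	\cE = \rho^2\Big(\tfrac{y^2}{4} - \alpha\log\!\big(1+\rho^{-2}y\big) + \log C_0\Big).
\]

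Granting (a) and (b), the two members of the minimum follow quickly. For the bound by $4/y^2$: if $\cE\geq 0$ then $\big|y^{-2}-(y+\cE)^{-2}\big| = y^{-2}-(y+\cE)^{-2}\leq y^{-2}$, while if $\cE<0$ then $\big|y^{-2}-(y+\cE)^{-2}\big| = (y+\cE)^{-2}-y^{-2}\leq (y+\cE)^{-2}\leq 4/y^2$ by (a). For the other bound, write $y^{-2}-(y+\cE)^{-2}=\int_y^{y+\cE} 2 s^{-3}\,ds$ and estimate $s^{-3}$ on the interval of integration by its value at the endpoint nearest the origin, which by (a) is at least $\min\{y,y+\cE\}\geq y/2$; this gives $\big|y^{-2}-(y+\cE)^{-2}\big| \leq 16|\cE|/y^3$. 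Combining with (b),
\[
	\Big|\frac{1}{y^2}-\frac{1}{(y+\cE)^2}\Big|
		\leq \frac{16|\cE|}{y^3}
		\leq 16C\rho\,\frac{1+y^2}{y^3}
		= 16C\rho\Big(\frac{1}{y}+\frac{1}{y^3}\Big),
\]
which is the second member of the minimum, and the lemma follows with a suitable universal constant.

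To prove (a), drop the nonnegative term $\tfrac{y^2}{4}$ and substitute $z:=\rho^{-2}y$ (so $y=\rho^2 z$): then $y+\cE \geq \rho^2\big(z-\alpha\log(1+z)+\log C_0\big)$, so it suffices that $h(z):=z/2-\alpha\log(1+z)+\log C_0\geq 0$ for all $z\geq 0$. Since $h'(z)=\tfrac12-\alpha(1+z)^{-1}$ is increasing and vanishes only at $z^\star=2\alpha-1>0$ (recall $\alpha>1$), the minimum of $h$ is $h(z^\star)=\alpha-\tfrac12-\alpha\log(2\alpha)+\log C_0$, which is nonnegative as soon as $C_0\geq(2\alpha)^\alpha e^{1/2-\alpha}$. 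To prove (b), the triangle inequality gives $|\cE|\leq\rho^2\big(\tfrac{y^2}{4}+\alpha\log(1+\rho^{-2}y)+\log C_0\big)$, and each term is $\leq C\rho(1+y^2)$: the first and third because $\rho^2\leq\rho$; the middle one by splitting on the size of $\rho^{-2}y$. If $\rho^{-2}y\leq 1$, then $\log(1+\rho^{-2}y)\leq\rho^{-2}y$ and $y\leq\rho^2$, so $\alpha\rho^2\log(1+\rho^{-2}y)\leq\alpha y\leq\alpha\rho^2\leq\alpha\rho$; if $\rho^{-2}y\geq 1$, then $\log(1+\rho^{-2}y)\leq\log 2+2|\log\rho|+\log y$, and one concludes using $\rho^2\leq\rho$, $\rho^2|\log\rho|\leq e^{-1}\rho\leq\rho$ (since $x\mapsto x|\log x|$ is bounded by $e^{-1}$ on $(0,1]$), and $\rho^2\log y\leq\rho^2(\log y)_+\leq\rho^2 y^2\leq\rho y^2$.

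I expect the only genuine difficulty to be (b): making the estimate of $\log(1+\rho^{-2}y)$ uniform in $y$, since the small-$y$ and large-$y$ ranges are governed by different terms, while keeping the constant free of $\e$ and $\tau$. This goes through precisely because $\cE$ carries the prefactor $\rho^2$ whereas the target only needs $\rho$, and $\rho^2\leq\rho$, $\rho^2|\log\rho|\leq e^{-1}\rho$ on $(0,1]$ absorb the loss. The requirement that $C_0$ be large in the statement is exactly the condition $C_0\geq(2\alpha)^\alpha e^{1/2-\alpha}$ from (a), and the smallness of $\e$ is used only through $\e\leq 1$.
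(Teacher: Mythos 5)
Your proof is correct, and it is organized differently from the paper's. The paper splits into the regions $y < \log(C_0)\e e^{-\tau/2}/(2\alpha)$ and $y \geq \log(C_0)\e e^{-\tau/2}/(2\alpha)$, treats $\cE \geq 0$ and $\cE < 0$ separately (writing the difference as $\frac{\cE}{y^2(y+\cE)} + \frac{\cE}{y(y+\cE)^2}$), and obtains the non-degeneracy $y + \cE \geq y/2$ only in the negative case, via the sub-polynomial bound $\alpha\log(1+z) \leq z^{1/4} + \log(C_0)/2$ together with taking $\e$ small; it then finishes with Young's inequality. You instead prove two global facts: the non-degeneracy $y+\cE \geq y/2$ for \emph{all} $y>0$, obtained by exactly minimizing $z/2 - \alpha\log(1+z) + \log C_0$ (so it needs only $C_0 \geq (2\alpha)^\alpha e^{1/2-\alpha}$ and no smallness of $\e$), and the uniform bound $|\cE| \leq C\e^{1/2}e^{-\tau/4}(1+y^2)$ (where, as in the paper, the loss from $\rho^2=\e e^{-\tau/2}$ to $\rho=\e^{1/2}e^{-\tau/4}$ absorbs the logarithm, here via $\rho^2|\log\rho| \leq e^{-1}\rho$); the conclusion then follows from the identity $y^{-2}-(y+\cE)^{-2}=\int_y^{y+\cE} 2s^{-3}\,ds$. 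Two small advantages of your route: you verify \emph{both} members of the minimum at every $y$ (which is literally what the stated inequality requires, whereas the paper's case analysis supplies one bound per region), and you use the smallness of $\e$ only through $\e \leq 1$, pushing all largeness requirements onto $C_0$. The paper's version, on the other hand, needs no exact minimization and keeps the computation closer to the algebraic decomposition it reuses in the subsequent $L^2$ estimates. Either argument serves the later lemmas equally well, since only the form of the right-hand side matters there.
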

\begin{proof}
	Consider first the case $y < \log(C_0) \e e^{-\tau/2}/(2\alpha)$.  Then, since $\log(1 + z ) \leq z$ for all $z>0$, we find
	\[
		\cE
			\geq \e e^{-\tau/2}
				\left( \frac{y^2}{4} - \alpha \e^{-1} e^{\tau/2} y +  \log(C_0)\right)
			\geq \e e^{-\tau/2}
				\left( \frac{y^2}{4} + \frac{\log(C_0)}{2}\right) \geq 0.
	\]
	Hence, $0 \leq y^{-2} - (y+\cE)^{-2} \leq y^{-2}$, which concludes the proof in this case.
	
	Now we consider the case where $y \geq \log(C_0) \e e^{-\tau/2}/(2\alpha)$.  First, rewrite the expression as
	\[
		\frac{1}{y^2} - \frac{1}{(y+\cE)^2}
			= \frac{\cE}{y^2(y+\cE)} + \frac{\cE}{y(y+\cE)^2}.
	\]
	If $\cE \geq 0$, then, since $\cE \leq \e e^{-\tau/2}\left(y^2 + \log(C_0)\right)$, we find
	\[
		\left|\frac{1}{y^2} - \frac{1}{(y+\cE)^2}\right|
			= \frac{\cE}{y^2(y+\cE)} + \frac{\cE}{y(y+\cE)^2}
			\leq \frac{\e e^{-\tau/2}\left(y^2 + \log(C_0)\right)}{y^3},
	\]
	from which the claim follows.
	
	If $\cE < 0$, we use that $\log$ grows sub-polynomially.  Indeed, choosing $C_0\geq e$ sufficiently large such that $\alpha \log(1 + z) \leq z^{1/4} + \log(C_0)/2$ for all $z \geq \log(C_0)/(2\alpha)$, yields
	\[\begin{split}
		\cE
			&\geq \e e^{-\tau/2}
				\left( \frac{y^2}{4} - \left(\e^{-1} e^{\tau/2} y\right)^{1/4} +  \frac{\log(C_0)}{2}\right)\\
			&\geq \e e^{-\tau/2}
				\left( \frac{y^2}{4} - \left(\frac{1}{2} + \frac{\e^{-1/2} e^{\tau/4} y^{1/2}}{2} \right) +  \frac{\log(C_0)}{2}\right)
			\geq - \frac{\e^{1/2} e^{-\tau/4} y}{2}.
	\end{split}
	\]
	which implies that $y+\cE \geq y/2$ if $\e$ is sufficiently small and that
	\[\begin{split}
		\left|\frac{1}{y^2} - \frac{1}{(y+\cE)^2}\right|
			&= -\cE \left(\frac{1}{y^2(y+\cE)} + \frac{1}{y(y+\cE)^2}\right)\\
			&\leq \frac{\e^{1/2} e^{-\tau/4} y}{2}\left( \frac{1}{y^2(y/2)} + \frac{1}{y (y/2)^2}\right)
			\leq 3\frac{\e^{1/2} e^{-\tau/4}}{y^2}.
	\end{split}\]
	The claim follows by applying Young's inequality.  This finishes the proof.
\end{proof}

We establish the spectral gap of the operator $M_A$ in $H^1_0(\R_+)$.  
\begin{lemma}\label{l:eigenvector_r3}
The function $Q(y) = Z^{-1} y^\alpha e^{- \frac{y^2}{8}}$, where $Z>0$ is such that $\|Q\|_{L^2} = 1$, solves
\[
	M_A Q = 0.
\]
Further, there exists $\lambda_A>0$, depending only on $A$, such that for all $\psi\in H^1_0(\R_+)$ such that $\inner{\psi}{Q} = 0$, we have
\[
	\inner{M_A \psi}{\psi} \geq \lambda_A\|\psi\|_2.
\]
\end{lemma}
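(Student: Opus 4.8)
The plan is to verify the explicit eigenfunction by direct computation and then obtain the spectral gap from the structure of $M_A$ as a one-dimensional Schr\"odinger operator with a confining potential. First I would check that $Q(y) = Z^{-1} y^\alpha e^{-y^2/8}$ satisfies $M_A Q = 0$: writing $Q = Z^{-1} y^\alpha g$ with $g = e^{-y^2/8}$, one computes $g' = -(y/4) g$ and $g'' = (y^2/16 - 1/4) g$, so that $(y^\alpha g)'' = \alpha(\alpha-1) y^{\alpha-2} g - \tfrac{\alpha}{2} y^\alpha g + (y^2/16 - 1/4) y^\alpha g + \alpha y^{\alpha-1} g' \cdot 2/ y \cdots$; collecting terms and using $\alpha(\alpha-1) = A$ together with $\alpha/2 + \alpha/2 \cdot \ldots$ shows the $y^{\alpha-2}$ terms cancel against $A/y^2$, the $y^\alpha$ terms reproduce exactly $y^2/16 + 1/4 - (1+\alpha)/2$, and hence $M_A Q = 0$. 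Note $Q \in H^1_0(\R_+)$ precisely because $\alpha > 1$ (so $Q(0) = 0$ and $Q' \in L^2$ near $0$), and $Q > 0$ on $(0,\infty)$, so $Q$ is the ground state and $0$ is the bottom of the spectrum.

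Next, for the gap, I would argue that $M_A$ has purely discrete spectrum. The potential $V(y) = y^2/16 + 1/4 + A/y^2 - (1+\alpha)/2$ satisfies $V(y) \to \infty$ as $y \to \infty$ and as $y \to 0^+$ (since $A > 0$), so $M_A$ with the Dirichlet condition at $0$ has compact resolvent; this is a standard fact (e.g., via the Rellich--Kondrachov-type compactness of the form domain into $L^2$, using that $\{V \le R\}$ is bounded away from $0$ and $\infty$). Hence the spectrum is a sequence $0 = \mu_0 < \mu_1 \le \mu_2 \le \cdots \to \infty$, with $\mu_0$ simple because the ground state $Q$ is strictly positive. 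Setting $\lambda_A := \mu_1 > 0$, the min-max principle gives, for every $\psi \in H^1_0(\R_+)$ with $\inner{\psi}{Q} = 0$,
\[
	\inner{M_A \psi}{\psi} \ge \mu_1 \|\psi\|_2^2 = \lambda_A \|\psi\|_2^2.
\]
(I would also remark that one can make this fully explicit: the conjugated operator $Q^{-1} M_A (Q \,\cdot\,)$ is a Laguerre-type operator whose eigenvalues are $\mu_k = k$ for $k \in \N$, so in fact $\lambda_A = 1$; but for the argument only positivity is needed.)

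The main obstacle is establishing the discreteness of the spectrum cleanly, i.e.\ the compactness of the embedding of the form domain of $M_A$ into $L^2(\R_+)$. The confining behavior at $+\infty$ is classical, but one must also handle the singular $A/y^2$ term at the origin together with the Dirichlet condition; the cleanest route is to note that the quadratic form $q(\psi) = \|\psi'\|_2^2 + \inner{V_+ \psi}{\psi}$ dominates $\int (1 + y^2 + y^{-2})|\psi|^2$ up to a constant, and a bounded set in this weighted space is tight at both ends of $(0,\infty)$, yielding $L^2$-precompactness. Once discreteness is in hand, simplicity of $\mu_0$ follows from positivity/irreducibility of the semigroup (Perron--Frobenius), and the gap estimate is immediate from min-max. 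One minor point to note in the statement: the right-hand side should read $\lambda_A \|\psi\|_2^2$ rather than $\lambda_A\|\psi\|_2$; I would state and use it in the squared form, which is what the spectral argument produces.
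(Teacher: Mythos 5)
Your proposal is correct and follows essentially the same route as the paper: verify $M_A Q = 0$ by direct computation using $\alpha(\alpha-1)=A$, invoke discreteness of the spectrum (the paper simply asserts compactness of the self-adjoint operator, while you supply the confining-potential/compact-resolvent justification), identify $0$ as the simple principal eigenvalue from the positivity of $Q$, and conclude via the Rayleigh quotient (min--max). Your remark that the right-hand side should read $\lambda_A\|\psi\|_2^2$ rather than $\lambda_A\|\psi\|_2$ is also consistent with how the paper actually applies the lemma later.
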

\begin{proof}
The first claim is a straightforward computation using the fact that $\alpha(\alpha-1) = A$, by construction.  We omit it.

Since $M_A$ is a compact, self-adjoint operator, its spectrum is discrete.  Since $Q$ is positive, it is the principal eigenfunction and $0$ is the principal eigenvalue of $M_A$.  By standard theory (see, e.g., \cite{DautrayLions}), $0$ has multiplicity one.   Hence, all of the eigenvalues of $M_A$ except for $0$ are positive.  The second claim then follows via the Rayleigh quotient interpretation of the spectrum.
\end{proof}

We now analyze the long-time behavior of $\tilde \zeta$.  We begin with a preliminary estimate on the behavior of $\tilde \zeta$ near the origin.  We note that this is, roughly, an {\em a priori} estimate of $\tilde \zeta_y$ near the origin depending only on the $L^\infty$-norm of $\tilde \zeta$ in the sense that it is not (yet) a ``closed'' estimate.

\begin{lemma}\label{l:normal_bound_r3}
Fix any $T>0$ and let $\theta = \sqrt{A/(1+\alpha)}$.  There exists $\epsilon_0>0$, independent of $T$, such that if $\e \in [0,\epsilon_0]$, then, for all $(\tau,y) \in \R_+ \times [0,\theta]$,
\[
	\tilde \zeta(\tau,y)
		\leq \max\{C, 2 \|\tilde \zeta\|_{L^\infty([0,T]\times\{\theta\}}/\theta\} y.
\]
\end{lemma}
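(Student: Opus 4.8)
The plan is to trap $\tilde\zeta$ from above near the origin by an explicit \emph{stationary} barrier $w(y) = Ky - \delta y^2$ on $[0,\theta]$, where $K$ is, up to a factor $2$, the slope demanded by the boundary value at $y=\theta$, and $\delta$ is a small multiple of $\e$. Concretely, I would take $C := 2\sup_{y\in(0,\theta]}Q(y)/y$, which is finite because $Q(y)/y = Z^{-1}y^{\alpha-1}e^{-y^2/8}$ and $\alpha>1$ (so $C$ depends only on $A$), and set $K := \max\{C,\, 2\|\tilde\zeta\|_{L^\infty([0,T]\times\{\theta\})}/\theta\}$ and $\delta := \e S_A K$. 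For $\e$ small enough that $\e S_A\theta\le\tfrac12$ one has $w(y)=Ky(1-\e S_A y)\ge\tfrac12 Ky\ge0$ on $[0,\theta]$, and then $w$ dominates $\tilde\zeta$ on the parabolic boundary of $[0,T]\times[0,\theta]$: at $\tau=0$, $w(y)\ge\tfrac C2 y\ge Q(y)=\tilde\zeta(0,y)$; at $y=0$, $w(0)=0=\tilde\zeta(\tau,0)$; and at $y=\theta$, $w(\theta)\ge\tfrac12 K\theta\ge\|\tilde\zeta\|_{L^\infty([0,T]\times\{\theta\})}\ge\tilde\zeta(\tau,\theta)$. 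The factor $2$ in the statement is precisely the slack needed here, and at $\tau=0$, to absorb the correction $-\delta y^2$.

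The heart of the argument is that $w$ is a strict supersolution. Cancelling the $A/y^2$ term in $M_A$ against the $A/y^2$ on the right of \eqref{e:tildezeta_r3}, $\tilde\zeta$ solves $\tilde\zeta_\tau = \tilde\zeta_{yy} - \big(\tfrac{y^2}{16}+\tfrac14-\tfrac{1+\alpha}{2}\big)\tilde\zeta + \e S_A e^{-\tau/2}\big(\tilde\zeta_y-\tfrac y4\tilde\zeta\big) - \tfrac{A}{(y+\cE)^2}\tilde\zeta$ on $(0,T]\times(0,\theta]$. Inserting $w$ (with $w_{yy}=-2\delta$) into the associated operator produces the supersolution deficit
\[
	2\delta \;+\; \Big(\tfrac{y^2}{16}+\tfrac14-\tfrac{1+\alpha}{2}+\tfrac{A}{(y+\cE)^2}\Big)w \;+\; \e S_A e^{-\tau/2}\Big(\tfrac y4 w + 2\delta y - K\Big).
\]
Since $w\ge0$ and $e^{-\tau/2}\le1$, this is at least $2\delta - \e S_A K + \big(\tfrac{y^2}{16}+\tfrac14-\tfrac{1+\alpha}{2}+\tfrac{A}{(y+\cE)^2}\big)w$, and with $\delta=\e S_A K$ the first two terms collapse to $\e S_A K>0$, provided the coefficient $\tfrac{y^2}{16}+\tfrac14-\tfrac{1+\alpha}{2}+\tfrac{A}{(y+\cE)^2}$ is nonnegative on $(0,\theta]$. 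This is where the value $\theta=\sqrt{A/(1+\alpha)}$ enters: the normalization of $C_0$ forces $y+\cE>0$, and $\cE\le\e e^{-\tau/2}(y^2/4+\log C_0)\le\e B$ with $B=\theta^2/4+\log C_0$, so $\tfrac{A}{(y+\cE)^2}\ge\tfrac{A}{(y+\e B)^2}$; for $\e$ small one then checks, splitting into $y\le\theta/2$ (where $A(y+\e B)^{-2}\ge A\theta^{-2}=1+\alpha$) and $\theta/2<y\le\theta$ (where $A(y+\e B)^{-2}\ge\tfrac{1+\alpha}{2}$), that $\tfrac{y^2}{16}+\tfrac14-\tfrac{1+\alpha}{2}+\tfrac{A}{(y+\e B)^2}\ge\tfrac14$ on $(0,\theta]$. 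Hence $w$ is a strict supersolution, and since the operator has bounded coefficients on the finite cylinder $[0,T]\times[0,\theta]$ (the singular potential is bounded there because $y+\cE\ge c\,\e e^{-T/2}>0$), the comparison principle gives $\tilde\zeta\le w\le Ky$ on $[0,T]\times[0,\theta]$; as $T$ is arbitrary this is the claimed bound on $\R_+\times[0,\theta]$, with $\epsilon_0 = \min\{(2S_A\theta)^{-1},(\sqrt2-1)\theta/B\}$ depending only on $A$ and $C_0$, not on $T$.

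I expect the main obstacle to be recognizing \emph{why} the naive linear barrier $w=Ky$ does not work, and hence why the quadratic correction and the factor $2$ are needed. Near $y=0$ the stabilizing term $A(y+\cE)^{-2}\tilde\zeta$ degenerates: because $\cE$ is itself of order $\e$, that term behaves like $(\text{bounded})\cdot y$ rather than dominating, so the small-but-nonvanishing drift $-\e S_A e^{-\tau/2}w_y\approx-\e S_A K$ wins and $Ky$ fails to be a supersolution as $y\to0$. Subtracting $\delta y^2$ injects a constant $+2\delta$ into the deficit, which beats that drift precisely when $\delta\gtrsim\e$, at the cost of the extra room on the boundary and the initial datum encoded by the factor $2$. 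The remaining steps — the three boundary comparisons, the sign of the $\cE$-potential coefficient on $(0,\theta]$, and the applicability of the comparison principle — are routine once this mechanism is in place.
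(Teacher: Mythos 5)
Your proof is correct and follows the paper's strategy in all essentials: an explicit, time-independent barrier vanishing at $y=0$, fitted over $\tilde\zeta$ on the parabolic boundary of $[0,T]\times[0,\theta]$ (the factor $2$ providing the slack at $\tau=0$ and at $y=\theta$), with the choice $\theta=\sqrt{A/(1+\alpha)}$ used exactly as in the paper to guarantee $A/(y+\cE)^2\ge\tfrac{1+\alpha}{2}$ on $[0,\theta]$ for $\e$ small, and the comparison principle applied on the finite cylinder where the potential is bounded because $y+\cE>0$ by the normalization of $C_0$. The genuine difference is your quadratic correction $-\delta y^2$ with $\delta=\e S_A K$: the paper uses the purely linear barrier $\bar N y$, and its verification \eqref{e:ss_c12} enters the drift with a $+$ sign, so that $\e S_A e^{-\tau/2}\overline\zeta_y$ appears as a helpful constant (the term written there as ``$\lambda\e e^{\tau/2}$''); for a supersolution of \eqref{e:tildezeta_r3} this term should carry the opposite sign, and then, as you point out, near $y=0$ every stabilizing term is $O(y)$ (since $y\,A/(y+\cE)^2\to 0$ because $\cE$ is itself of order $\e e^{-\tau/2}$), while the drift contributes the nonvanishing constant $-\e S_A e^{-\tau/2}\bar N$, so the linear barrier is not a supersolution in a small neighborhood of $y=0$. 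Your $-\delta y^2$ term injects the constant $+2\delta$ that absorbs exactly this, at the harmless price of the factor $2$ already present in the statement; in that sense your argument is a more careful version of the same proof and in fact repairs the sign slip in the paper's computation, while keeping $\epsilon_0$ independent of $T$ as required.
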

\begin{proof}
Let 
\[
	\bar N = \max\Big\{
		\max_y \frac{Q(y)}{y},
		\frac{2 \|\tilde \zeta\|_{L^\infty([0,T]\times \{\theta\})}}{\theta}
	\Big\}.
\]
We claim that $\overline \zeta(\tau,y) = \bar N y$ is a super-solution of~\eqref{e:tildezeta_r3} on $[0,T]\times [0, \theta]$; indeed,
\begin{equation}\label{e:ss_c12}
\begin{split}
	\overline \zeta_\tau &+ M_A \overline \zeta
		- \overline \zeta \left(\frac{A}{y^2} - \frac{A}{(y + \cE)^2}\right) + \eps S_A e^{-\frac{\tau}{2}} \left( \overline \zeta_y - \frac{y}{4} \overline \zeta \right)
			\\
		& = \bar N \left[y\left(  \frac{y^2}{16} + \frac{1}{4} + \frac{A}{(y + \cE)^2}
			- \frac{1+\alpha}{2}
			- \eps S_A e^{-\frac{\tau}{2}} \frac{y}{4}\right) + \lambda \eps e^{\frac{\tau}{2}}\right] \\
		& \geq \bar N\left[y\left(\frac{A}{(y+ \mathcal{E}(t,y))^2} - \frac{1}{4} - \frac{\alpha}{2}\right)\right].
\end{split}
\end{equation}
In the last line we used that, up to decreasing $\e$,
\[
	\frac{- \e S_A e^{-\tau/2} y^2}{4}
		\geq - \frac{y^3}{16} - \frac{S_A^3 \e^3 e^{-3\tau/2} 8}{27}
		\geq - \frac{y^3}{16} - S_A \e e^{-\tau/2}.
\]

We now show that the last line in~\eqref{e:ss_c12} is non-negative if $\e$ is sufficiently small.  If $y\in [0,\theta]$,
\[
	\cE(\tau,y)
		\leq \e e^{-\tau/2} \left[ \frac{y^2}{4} + \log C_0 \right]
		\leq \e  \left[\theta^2 + \log C_0 \right].
\]
Hence, we have that,
\[
	\frac{A}{(y + \cE)^2}
		\geq \frac{A}{(\theta + \e \theta^2 + \e \log C_0)^2}
		\geq \frac{A}{(\sqrt 2 \theta)^2}
		= \frac{1+\alpha}{2}
		\geq \frac{1}{4} + \frac{\alpha}{2},
\]
where the second inequality follows by choosing $\e$ sufficiently small.  
Thus, $\overline \zeta$ is a super-solution of~\eqref{e:tildezeta_r3} as claimed.

By our choice of $\bar N$, we have that $\bar \zeta \geq \tilde \zeta$ on $\{0\}\times [0,\theta]$ and $[0,T]\times \{0,\theta\}$.  The comparison principle then implies that $\bar \zeta \geq \tilde \zeta$ on $[0,T]\times [0,\theta]$, which concludes the proof.
%
%
%
%
\end{proof}

Using \Cref{l:normal_bound_r3}, we now obtain a closed bound on the $L^2$-norm of~$\tilde\zeta$.  Here, and throughout the rest of the present section, we use $\|\cdot\|$ to mean the $L^2(\R_+)$ norm for notational ease.
\begin{lemma}\label{lem:l2zetatilde}
	If $\e$ is sufficiently small, $\|\tilde \zeta(\tau)\| \leq 2$ for all $\tau\geq 0$.
\end{lemma}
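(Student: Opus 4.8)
The plan is a continuity (bootstrap) argument. I would set $T^\ast := \sup\{T>0 : \|\tilde\zeta(\tau)\|\le 2\ \text{for all}\ \tau\in[0,T]\}$; since $\tilde\zeta(0,\cdot)=Q$ with $\|Q\|=1<2$ and $\tau\mapsto\tilde\zeta(\tau,\cdot)$ is continuous into $L^2(\R_+)$, we have $T^\ast>0$, and the goal is to show $T^\ast=\infty$. Suppose not, so $\|\tilde\zeta(T^\ast)\|=2$. On $[0,T^\ast]$ the standing bound $\|\tilde\zeta(\tau)\|\le 2$ is available; moreover, on any fixed set $\{\,|y-\theta|<\theta/2\,\}$ bounded away from the singularity $y=0$ the equation~\eqref{e:tildezeta_r3} is uniformly parabolic with coefficients bounded uniformly in $\tau$ and, for $\e$ small, in $\e$ (using that $|\cE|\to 0$ there). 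Hence a standard local boundedness estimate for parabolic equations, applied also up to the initial time where $\tilde\zeta(0,\cdot)=Q$ is bounded, gives $\sup_{[0,T^\ast]\times\{\theta\}}|\tilde\zeta|\le C$ with $C$ depending only on $A$. Feeding this into~\Cref{l:normal_bound_r3} (legitimate once $\e$ is small), I obtain the pointwise vanishing near the origin: $0\le\tilde\zeta(\tau,y)\le Cy$ for all $(\tau,y)\in[0,T^\ast]\times[0,\theta]$ with $C$ universal, the lower bound being the maximum principle for~\eqref{e:self_similar_zeta}.

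Next I would run an $L^2$ energy estimate. Multiplying~\eqref{e:tildezeta_r3} by $\tilde\zeta$ and integrating over $\R_+$ (boundary terms at $y=0$ vanish since $\tilde\zeta(\tau,0)=0$, those at $y=\infty$ by Gaussian decay), and noting that the drift term satisfies $\e S_A e^{-\tau/2}\inner{\tilde\zeta_y-\tfrac{y}{4}\tilde\zeta}{\tilde\zeta}=-\tfrac{\e S_A e^{-\tau/2}}{4}\int_0^\infty y\tilde\zeta^2\le 0$, I get, with $h(\tau):=\|\tilde\zeta(\tau)\|^2$,
\[
	\tfrac12 h' + \inner{M_A\tilde\zeta}{\tilde\zeta}
		\le \inner{\Big(\tfrac{A}{y^2}-\tfrac{A}{(y+\cE)^2}\Big)\tilde\zeta}{\tilde\zeta}.
\]
Writing $a(\tau):=\inner{\tilde\zeta}{Q}$ and $\psi:=\tilde\zeta-aQ\perp Q$, the spectral gap~\Cref{l:eigenvector_r3} together with $M_A Q=0$ gives $\inner{M_A\tilde\zeta}{\tilde\zeta}=\inner{M_A\psi}{\psi}\ge\lambda_A\|\psi\|^2=\lambda_A(h-a^2)$. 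For the right-hand side I would split $\R_+=[0,\theta]\cup[\theta,\infty)$: on $[\theta,\infty)$, \Cref{l:curly_E} bounds the coefficient by $C\e^{1/2}e^{-\tau/4}(y^{-1}+y^{-3})\le C\e^{1/2}e^{-\tau/4}$, contributing $\le C\e^{1/2}e^{-\tau/4}h$; on $[0,\theta]$ I would use $\tilde\zeta\le Cy$ together with the \emph{minimum} in~\Cref{l:curly_E}, splitting at $y_\tau:=\e^{1/2}e^{-\tau/4}$ — using $4A/y^2$ for $y<y_\tau$ (contribution $\le Cy_\tau$) and $C\e^{1/2}e^{-\tau/4}(y^{-1}+y^{-3})$ for $y\in[y_\tau,\theta]$ (contribution $\le C\e^{1/2}e^{-\tau/4}(\log(\theta/y_\tau)+1)$). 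Since $\log(\theta/y_\tau)=\log\theta+\tfrac12\log(1/\e)+\tfrac{\tau}{4}$ and both $\sup_{\tau\ge0}\tau e^{-\tau/4}$ and $\e^{1/2}\log(1/\e)$ are controlled, the whole $[0,\theta]$ contribution is $\le\delta_1(\e)$ with $\delta_1(\e)\to0$. Altogether $\tfrac12 h'\le-(\lambda_A-C\e^{1/2})h+\lambda_A a^2+\delta_1(\e)$.

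Finally I would close the system by controlling the neutral coefficient $a$. Projecting~\eqref{e:tildezeta_r3} onto $Q$ (again $M_A Q=0$) gives $a'=\e S_A e^{-\tau/2}\inner{\tilde\zeta_y-\tfrac{y}{4}\tilde\zeta}{Q}+\inner{(\tfrac{A}{y^2}-\tfrac{A}{(y+\cE)^2})\tilde\zeta}{Q}$. Integrating by parts and using that $Q_y$ and $yQ$ lie in $L^2(\R_+)$ (indeed, by a clean cancellation, $Q_y+\tfrac{y}{4}Q=Z^{-1}\alpha y^{\alpha-1}e^{-y^2/8}$) bounds the first term by $C\e e^{-\tau/2}\|\tilde\zeta\|$; for the second, the same splitting as above, now with the extra weight $Q\le Cy^\alpha$ near $0$ (and $\alpha>1$, so $y^{\alpha-2}$ is integrable at $0$), gives $C\e^{1/2}e^{-\tau/4}(\|\tilde\zeta\|+1)$. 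Thus $|a'(\tau)|\le C\e^{1/2}e^{-\tau/4}$ on $[0,T^\ast]$; since $a(0)=\|Q\|^2=1$ this yields $|a(\tau)-1|\le C\e^{1/2}$, hence $a^2\le1+C\e^{1/2}$. Substituting into the energy inequality and taking $\e$ small, $h'\le-\lambda_A h+2\lambda_A+\delta(\e)$ with $\delta(\e)\to0$, so Grönwall gives $h(\tau)\le\max\{1,\,2+\delta(\e)/\lambda_A\}<4$ for $\e$ small, i.e. $\|\tilde\zeta(\tau)\|<2$, on $[0,T^\ast]$; this contradicts $h(T^\ast)=4$. Therefore $T^\ast=\infty$, and $\|\tilde\zeta(\tau)\|\le 2$ for all $\tau\ge0$.

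The hard part is the region near $y=0$: one must use precisely the \emph{minimum} in~\Cref{l:curly_E} — the crude $4A/y^2$ only on the thin strip $\{y<y_\tau\}$ and the decaying bound $\e^{1/2}e^{-\tau/4}/y^3$ just above it — in tandem with the linear vanishing $\tilde\zeta\le Cy$ from~\Cref{l:normal_bound_r3}, which itself requires upgrading the standing $L^2$ bound to a pointwise bound at $y=\theta$ via parabolic regularity. The spectral gap supplies dissipation transverse to the neutral mode $Q$, and tracking $a(\tau)$ prevents growth along $Q$; the small parameter $\e$ and the $e^{-\tau/2}$ factors make all remaining contributions negligible.
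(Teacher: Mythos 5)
Your proof is correct, and its skeleton is the paper's: the energy identity for $\|\tilde\zeta\|^2$, the splitting of the potential error near and away from $y=0$ via \Cref{l:curly_E} combined with the linear vanishing $\tilde\zeta\leq Cy$ from \Cref{l:normal_bound_r3}, a parabolic-regularity bound for $\tilde\zeta$ at $y=\theta$, and a continuation/bootstrap argument. You differ in two genuine ways. First, you close the estimate with the spectral gap of $M_A$, decomposing $\tilde\zeta=aQ+\psi$ and tracking the neutral coefficient $a$ by projecting onto $Q$; this yields true dissipation and the stronger conclusion $\|\tilde\zeta\|^2\leq 2+\delta(\e)$. The paper only uses $\inner{M_A\tilde\zeta}{\tilde\zeta}\geq 0$ and exploits that the error coefficient $C\e^{1/8}e^{-\tau/16}$ is integrable in $\tau$, so the $L^2$ norm simply cannot grow from $1$ to $4$; your extra machinery is not needed here (the paper defers exactly this $Q$-projection argument to \Cref{lem:Q_tildezeta}), but it buys a sharper bound and would be more robust if the error were merely small rather than time-integrable. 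Second, you handle the trace at $y=\theta$ by invoking local boundedness (with the bootstrap hypothesis $\|\tilde\zeta\|\leq 2$ already in force), getting a constant $L^\infty$ bound there, whereas the paper proves the unconditional estimate $\|\tilde\zeta\|^2_{L^\infty([0,\tau]\times\{\theta\})}\leq C(1+\int_0^\tau\|\tilde\zeta(\tau')\|^2d\tau')$ via a supersolution for small times plus interior $H^2_{\rm para}$/Sobolev estimates and absorbs the linear-in-$\tau$ growth into $e^{-\tau/16}$; your version is slightly cleaner but leans on up-to-initial-time local estimates that you should state carefully (or replace, as the paper does on $[0,2]$, by the explicit supersolution $Ne^{R\tau}y$). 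Your choice of splitting radius $\e^{1/2}e^{-\tau/4}$ produces logarithms in $\e$ and $\tau$ that you correctly control; the paper's choice $R=\min\{\theta,\e^{1/8}e^{-\tau/16}\}$ balances the two bounds and avoids them.
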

\begin{proof}
	Multiplying~\eqref{e:tildezeta_r3} by $\tilde \zeta$ and integrating by parts, we find
	\[\begin{split}
		\frac{1}{2} &\frac{d}{d\tau} \|\tilde \zeta(\tau)\|^2
			+ \inner{M_A \tilde \zeta}{\tilde \zeta}
			= \e S_A e^{-\tau/2} \inner{\tilde \zeta_y - \frac{y}{4} \tilde \zeta}{\tilde \zeta}
				+ \inner{\left( \fdfrac{A}{y^2} - \fdfrac{A}{(y+ \cE)^2}\right) \tilde \zeta}{\tilde \zeta}\\
			&= - \e S_A e^{-\tau/2} \inner{y \tilde \zeta}{\tilde \zeta} + \inner{\left(\fdfrac{A}{y^2}-\fdfrac{A}{(y+ \cE)^2}\right) \tilde \zeta}{\tilde \zeta}
			\leq \inner{\left(\fdfrac{A}{y^2}-\fdfrac{A}{(y+ \cE)^2}\right)_+ \tilde \zeta}{\tilde \zeta}.
	\end{split}\]
	From \Cref{l:eigenvector_r3}, we have that $\inner{M_A \tilde \zeta}{\tilde \zeta} \geq 0$, which yields
	\[
		\frac{1}{2} \frac{d}{d\tau} \|\tilde \zeta(\tau)\|^2
			\leq  \inner{\left(\fdfrac{A}{y^2}- \fdfrac{A}{(y+ \cE)^2}\right)_+ \tilde \zeta}{\tilde \zeta}.
	\]
	
	In order to conclude, we require bounds on the right hand side above.  Fix
	\be\label{e.c7272}
		R = \min\{\theta, \e^{1/8} e^{-\tau/16}\},
	\ee
	where $\theta$ is as in \Cref{l:normal_bound_r3}.  Using \Cref{l:curly_E} and \Cref{l:normal_bound_r3}, we find
	\[\begin{split}
		&\inner{\left(\fdfrac{A}{y^2}- \fdfrac{A}{(y+ \cE)^2}\right)_+ \tilde \zeta}{\tilde \zeta}
			\leq CA\int_0^R \frac{|\tilde \zeta|^2}{y^2} dx
				+ CA\int_R^\infty \e^{1/2} e^{-\tau/4}\left(\frac{1}{y} + \frac{1}{y^3}\right) |\tilde \zeta|^2 dx\\
			&\qquad\leq CAR \left\|\tilde\zeta(\tau)/y\right\|^2_{L^\infty([0,\theta])}
				+ \frac{CA  \e^{1/2} e^{-\tau/4}}{R^3} \int_R^\infty |\tilde \zeta|^2 dx\\
			&\qquad\leq CA \e^{1/8} e^{-\tau/16}\left( \max\left\{1, \|\tilde \zeta\|^2_{L^\infty([0,\tau]\times\{\theta\})}\right\} + \|\tilde \zeta\|^2\right),	\end{split}\]
	where, in the last inequality we used the choice of $R$.
	
	
	Thus, we arrive at the differential inequality
	\begin{equation}\label{e:ss_c13}
		\frac{d}{d\tau} \|\tilde \zeta\|^2
			\leq C \e^{1/8} e^{-\tau/16} \left(1 + \|\tilde \zeta\|_{L^\infty([0,\tau]\times \{\theta\})}^2 + \|\tilde \zeta(\tau)\|^2\right).
	\end{equation}
	This is useful if we bound $\|\tilde \zeta\|_{L^\infty([0,\tau]\times \{\theta\})}$ by $\|\tilde \zeta\|$, but such a bound is not available.  However, parabolic regularity implies the following estimate that we prove in the sequel:
	\begin{equation}\label{e:L_oo_L_2}
		\|\tilde \zeta\|_{L^\infty([0,\tau]\times \{\theta\})}^2
			\leq C\Big( 1 +  \int_0^\tau \|\tilde \zeta(\tau')\|^2 d \tau'\Big).
	\end{equation}
	Before proving~\eqref{e:L_oo_L_2}, we show how to conclude the proof assuming it.
	
	From~\eqref{e:ss_c13} and~\eqref{e:L_oo_L_2}, we find
	\[
		\frac{d}{dt} \|\tilde \zeta(\tau)\|^2
			\leq C\e^{1/8} e^{-\tau/16} \left(1 + \int_0^\tau \|\tilde \zeta(\tau')\|^2 d \tau' + \|\tilde \zeta(\tau)\|^2\right).
	\]
	Let $\tau_0 = \sup\{\tau: \|\tilde \zeta(\tau')\|^2 \leq 2$ for all $\tau'\in[0,\tau]\}$.  If $\tau_0 = \infty$, then we are finished.  Hence, suppose that $\tau_0$ is finite.  Then, integrating the above between $0$ and $\tau_0$, we see that
	\[\begin{split}
		1
			&= \|\tilde \zeta(\tau_0)\|^2 - \|\tilde\zeta(0)\|^2
			\leq C \e^{1/8} \int_0^{\tau_0} e^{-\tau/16} \left( 1 + \int_0^{\tau} 2 d \tau' + 2\right) d \tau\\
			&\leq C \e^{1/8} \left( 8 + 2 \int_0^{\tau_0} \tau e^{-\tau/16} d\tau + 16\right)
			\leq (8 + 2 \cdot 16^2 + 16)C \e^{1/8}.
	\end{split}\]
	This yields a contradiction if $\e$ is sufficiently small.  Hence, the proof is concluded once~\eqref{e:L_oo_L_2} is established.
	
	We now establish~\eqref{e:L_oo_L_2}.  Observe that
	\begin{equation}\label{e:ss_c14}
		\|\tilde \zeta\|_{L^\infty([0,\tau]\times \{\theta\})}
			\leq \|\tilde \zeta\|_{L^\infty([0,2]\times \{\theta\})}
				+ \sup_{\tau_0 \in [2,\max\{2,\tau\}]} \|\tilde \zeta\|_{L^\infty([\tau_0-1,\tau_0]\times \{\theta\})}.
	\end{equation}
	We bound each term on the right separately.
	
	The first term on the right in~\eqref{e:ss_c14} is bounded using the comparison principle.  Indeed, let $\overline \zeta(\tau,y) = N e^{R \tau} y$, where $N = \max Q(y)/y$ and $R = (1+\alpha)/2$ and notice that
	\[\begin{split}
		\overline \zeta_\tau &+ M_A \overline \zeta
		- \overline \zeta \Big(\fdfrac{A}{y^2} - \fdfrac{A}{(y + \cE)^2}\Big) + \eps S_A e^{-\frac{\tau}{2}} \Big( \overline \zeta_y - \frac{y}{4} \overline \zeta \Big)\\
		&= N \Big[y\Big(  \fdfrac{y^2}{16} + \fdfrac{1}{4} + \fdfrac{A}{(y + \cE)^2}
			- \eps S_A e^{-\frac{\tau}{2}} \fdfrac{y}{4}\Big) + \lambda \eps e^{\frac{\tau}{2}}\Big].
	\end{split}\]
	Using Young's inequality and decreasing $\e$, the right hand side is non-negative.  It follows that $\overline\zeta$ is a supersolution of~\eqref{e:tildezeta_r3} and, hence, that $\tilde \zeta \leq \overline \zeta$.  We conclude that
	\begin{equation}\label{e:ss_c15}
		\|\tilde \zeta\|_{L^\infty([0,2]\times \{\theta\})}
			\leq \|\overline \zeta\|_{L^\infty([0,2]\times \{\theta\})}
			= N e^{1+\alpha} \theta.
	\end{equation}
	
	Next, we consider the second term on the right in~\eqref{e:ss_c14}.  We may clearly assume $\tau \geq 2$. 
	Here, we use parabolic regularity estimates as follows.  For any $\tau_0 \in [2,\tau]$, standard interior parabolic regularity estimates in Sobolev spaces imply that
	\[\begin{split}
			C\|\tilde \zeta\|_{H^2_{\rm para} ([\tau_0-1,\tau_0]\times [\theta/2,3\theta/2])}^2
			\leq C \|\tilde \zeta\|_{L^2([\tau_0-2,\tau_0]\times [\theta/4,2\theta])}^2,
	\end{split}\]
	where we use the notation $H^2_{\rm para} = \{u \in L^2 : Du, D^2u, u_t \in L^2\}$ for the standard second order parabolic Sobolev space.  By the Sobolev embedding theorem, we find
	\[
		\|\tilde \zeta\|_{L^\infty([\tau_0-1,\tau_0]\times [\theta/2,3\theta/2])}^2
			\leq C \|\tilde \zeta\|_{L^2([\tau_0-2,\tau_0]\times [\theta/4,2\theta])}^2
			\leq C \int_0^\tau \|\tilde \zeta(\tau')\|^2 d\tau'.
	\]
	Since this holds for all $\tau_0\in [2,\tau]$, the combination of this,~\eqref{e:ss_c15}, and~\eqref{e:ss_c14} establishes~\eqref{e:L_oo_L_2}, which completes the proof.
%
\end{proof}

In fact, relying on parabolic regularity theory (see the proof above), we obtain a uniform $L^\infty$ bound on $\tilde \zeta$ on $[0,\infty)\times [\theta/2,\infty)$.  Pairing this with \Cref{l:normal_bound_r3}, we obtain the following lemma.
\begin{lemma}\label{lem:linftyzetatilde}
	If $\e$ is sufficiently small, there exists a constant $C$ such that
	\[
		\| \max\{1,y^{-1}\}\, \tilde \zeta  \|_{L^\infty(\R_+ \times \R_+)}
			\leq C.
	\]
\end{lemma}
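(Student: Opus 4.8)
The plan is to bound $\tilde\zeta$ separately on the three overlapping ranges $y\in[0,\theta]$, $y\in[\theta/2,R_0]$, and $y\in[R_0,\infty)$, where $R_0\geq 1$ is a large constant, depending only on $A$, fixed below; since $\tilde\zeta\geq 0$ and $\max\{1,y^{-1}\}$ is comparable to $1$ on $[\theta/2,\infty)$ and to $y^{-1}$ on $[0,\theta]$, it suffices to prove $\tilde\zeta(\tau,y)\leq C$ for $y\geq\theta/2$ and $\tilde\zeta(\tau,y)\leq Cy$ for $y\leq\theta$. First I would treat the strip $\R_+\times[\theta/2,R_0]$. This is precisely the parabolic-regularity argument at the end of the proof of \Cref{lem:l2zetatilde}, now fed the \emph{time-uniform} bound $\|\tilde\zeta(\tau)\|\leq 2$ of \Cref{lem:l2zetatilde} in place of the growing quantity $\int_0^\tau\|\tilde\zeta(\tau')\|^2\,d\tau'$ --- this is the reason \Cref{lem:l2zetatilde} is proved first. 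Concretely, rewrite \eqref{e:tildezeta_r3} as a uniformly parabolic equation $\tilde\zeta_\tau-\tilde\zeta_{yy}-\e S_A e^{-\tau/2}\tilde\zeta_y+b(\tau,y)\tilde\zeta=0$, whose coefficients are smooth and, for $\e$ small, bounded uniformly in $\tau_0$ on the cylinder $[\tau_0-2,\tau_0]\times[\theta/4,2R_0]$; interior $L^2$-parabolic regularity together with the one-dimensional embedding $H^1\hookrightarrow L^\infty$ gives, for $\tau_0\geq 2$,
\[ \|\tilde\zeta\|_{L^\infty([\tau_0-1,\tau_0]\times[\theta/2,R_0])}\leq C\Big(\int_{\tau_0-2}^{\tau_0}\|\tilde\zeta(\tau')\|^2\,d\tau'\Big)^{1/2}\leq 2\sqrt 2\,C, \]
while for $\tau\in[0,2]$ the same estimate up to $\tau=0$, using that the initial datum $\tilde\zeta(0,\cdot)=Q$ is bounded, controls $\tilde\zeta$ on $[0,2]\times[\theta/2,R_0]$. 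Hence $\|\tilde\zeta\|_{L^\infty(\R_+\times[\theta/2,R_0])}\leq C$; in particular $\|\tilde\zeta\|_{L^\infty(\R_+\times\{\theta\})}\leq C$.

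For $y\geq R_0$ I would use a constant supersolution, exploiting the confining potential in $M_A$. Writing the potential of $M_A$ (cf.\ \eqref{e.mcal_E}) as $V(y)=\tfrac{y^2}{16}+\tfrac14+\tfrac{A}{y^2}-\tfrac{1+\alpha}{2}$, fix $R_0\geq 1$, depending only on $A$, large enough that $V(y)\geq 1$ for all $y\geq R_0$ (possible since $V(y)\to\infty$) and that $R_0>\theta$. Set $C_1=\max\{C,\sup_{\R_+}Q\}$ with $C$ from the first step, and let $\psi\equiv C_1$. Using $\psi_y=0$, $\psi\geq 0$, and \Cref{l:curly_E} (which for $y\geq R_0\geq 1$ gives $A\,|y^{-2}-(y+\cE)^{-2}|\leq CA\,\e^{1/2}e^{-\tau/4}(y^{-1}+y^{-3})\leq CA\,\e^{1/2}$), one checks that
\[ \psi_\tau+M_A\psi-\e S_A e^{-\tau/2}\Big(\psi_y-\tfrac{y}{4}\psi\Big)-A\Big(\tfrac{1}{y^2}-\tfrac{1}{(y+\cE)^2}\Big)\psi = C_1\Big(V(y)+\e S_A e^{-\tau/2}\tfrac{y}{4}-A\big(\tfrac{1}{y^2}-\tfrac{1}{(y+\cE)^2}\big)\Big)\geq C_1\big(1-CA\,\e^{1/2}\big)\geq 0 \]
for $\e$ small, so $\psi$ is a supersolution of \eqref{e:tildezeta_r3} on $[R_0,\infty)$. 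Since $\psi=C_1\geq Q=\tilde\zeta(0,\cdot)$ on $[R_0,\infty)$ and $\psi=C_1\geq\tilde\zeta$ on $\R_+\times\{R_0\}$ by the first step, the comparison principle --- run on $[R_0,R_1]$ and let $R_1\to\infty$, which is legitimate given the standard a priori decay $\tilde\zeta(\tau,y)\to 0$ as $y\to\infty$ (locally uniformly in $\tau$), inherited from the Gaussian decay of the initial datum and the confining potential --- yields $\tilde\zeta\leq C_1$ on $\R_+\times[R_0,\infty)$.

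For $y\in[0,\theta]$ I would invoke \Cref{l:normal_bound_r3}: applying it for each $T>0$ and letting $T\to\infty$, together with the bound $\|\tilde\zeta\|_{L^\infty(\R_+\times\{\theta\})}\leq C$ from the first step, yields $\tilde\zeta(\tau,y)\leq Cy$ on $\R_+\times[0,\theta]$. Assembling the three ranges: on $[0,\theta]$ one gets $\max\{1,y^{-1}\}\tilde\zeta\leq C\max\{1,\theta\}$; on $[\theta,\infty)=[\theta,R_0]\cup[R_0,\infty)$ one has $\tilde\zeta\leq C$ and hence $\max\{1,y^{-1}\}\tilde\zeta\leq C\max\{1,\theta^{-1}\}$. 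This proves the lemma.

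The main obstacle is the first step: making the $L^\infty$ bound \emph{uniform in time}. Parabolic regularity alone only yields local-in-time $L^\infty$ control; to obtain uniformity one must localize in $\tau$ on windows of fixed length and bound the $L^2$ mass on each window, which is possible only because \Cref{lem:l2zetatilde} supplies a $\tau$-uniform $L^2$ estimate rather than one growing like $\tau$. A secondary difficulty is the large-$y$ regime: a constant is a supersolution only for large $y$ (the potential $V$ dips below $0$ on a bounded interval of intermediate $y$-values), which forces the split between the parabolic-regularity region $[\theta/2,R_0]$ and the supersolution region $[R_0,\infty)$, and in the latter one still must absorb the two perturbations --- done by discarding $\e S_A e^{-\tau/2}\tfrac{y}{4}\psi\geq 0$ and using the decaying bound of \Cref{l:curly_E}, valid once $y\geq 1$, on the term $A(y^{-2}-(y+\cE)^{-2})$.
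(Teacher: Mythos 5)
Your proposal is correct and follows essentially the same route as the paper, which asserts the uniform $L^\infty$ bound on $[0,\infty)\times[\theta/2,\infty)$ via parabolic regularity fed by the time-uniform $L^2$ bound of \Cref{lem:l2zetatilde}, and then pairs it with \Cref{l:normal_bound_r3} on $[0,\theta]$, exactly as you do. Your additional split at $R_0$ with a constant supersolution for $y\geq R_0$ is a sensible way to handle the large-$y$ region rigorously (where naive interior estimates would carry constants growing with the potential $y^2/16$), a point the paper glosses over; the only loose end is the justification of the comparison principle on the unbounded interval, which requires the mild control of $\tilde\zeta$ as $y\to\infty$ that you invoke and which follows from well-posedness of~\eqref{e:tildezeta_r3} in a weighted space.
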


We require one final estimate on $\tilde \zeta$, which shows that the $Q$ part of $\tilde \zeta$ dominates the long time behavior.  We establish that here.

\begin{lemma}\label{lem:Q_tildezeta}
	If $\e$ is sufficiently small, there is $\beta>0$ such that, for all $y \geq 0$,
	\[
		|\tilde \zeta(\tau,y) - \tilde \zeta(0,y)|
			\leq C \e^{1/8} \left(Q(y) + e^{-\beta \tau}\right).
	\]
\end{lemma}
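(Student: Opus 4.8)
The plan is to isolate, within $\tilde\zeta$, the part along the kernel function $Q$ of $M_A$. Write $\tilde\zeta(\tau,\cdot) = a(\tau)Q + \psi(\tau,\cdot)$ with $a(\tau) = \inner{\tilde\zeta(\tau)}{Q}$ and $\psi(\tau,\cdot)\perp Q$; since $\tilde\zeta(0,\cdot) = Q$ we have $a(0)=1$, $\psi(0,\cdot)=0$, and the object to bound is $w := \tilde\zeta(\tau,\cdot)-\tilde\zeta(0,\cdot) = (a(\tau)-1)Q + \psi(\tau,\cdot)$. First I would control $a$. Testing~\eqref{e:tildezeta_r3} against $Q$ and using $M_AQ=0$ with the self-adjointness of $M_A$ (boundary terms vanish since $Q(0)=\tilde\zeta(\tau,0)=0$ and both decay at $+\infty$) kills the $M_A$ term:
\[
	a'(\tau) = \e S_A e^{-\tau/2}\inner{\tilde\zeta_y - \tfrac{y}{4}\tilde\zeta}{Q} + \inner{\Big(\tfrac{A}{y^2}-\tfrac{A}{(y+\cE)^2}\Big)\tilde\zeta}{Q}.
\]
After one integration by parts, \Cref{lem:l2zetatilde} bounds the first term by $C\e e^{-\tau/2}$. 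For the second I would repeat the estimate in the proof of \Cref{lem:l2zetatilde}: split at $R=\min\{\theta,\e^{1/8}e^{-\tau/16}\}$ and use \Cref{l:curly_E} together with $\|\tilde\zeta/y\|_{L^\infty([0,\theta])}\le C$, $\|\tilde\zeta\|_{L^\infty}\le C$ (from \Cref{l:normal_bound_r3} and \Cref{lem:linftyzetatilde}) and $Q(y)\le Cy^\alpha e^{-y^2/8}$, obtaining $|a'(\tau)|\le C\e^{1/8}e^{-\tau/16}$. Integrating gives $|a(\tau)-1|\le C\e^{1/8}$ for all $\tau$, hence $|(a(\tau)-1)Q(y)|\le C\e^{1/8}Q(y)$; it remains to control $\psi$.

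The function $\psi$ solves $\psi_\tau + M_A\psi = g$ with $\psi(0)=0$ and $g = \e S_A e^{-\tau/2}(\tilde\zeta_y-\tfrac{y}{4}\tilde\zeta) + (\tfrac{A}{y^2}-\tfrac{A}{(y+\cE)^2})\tilde\zeta - a'(\tau)Q$. Pairing with $\psi$ and invoking the spectral gap of \Cref{l:eigenvector_r3}, $\inner{M_A\psi}{\psi}\ge\lambda_A\|\psi\|^2$, gives $\tfrac12\tfrac{d}{d\tau}\|\psi\|^2 + \lambda_A\|\psi\|^2 \le \inner{g}{\psi}$. The $a'Q$ term drops since $\psi\perp Q$. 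In the drift term I would integrate by parts ($\inner{\tilde\zeta_y}{\psi}=a\inner{Q_y}{\psi}$, $\inner{y\tilde\zeta}{\psi}=a\inner{yQ}{\psi}+\|y^{1/2}\psi\|^2$), so that what remains is bounded by $C\e e^{-\tau/2}\|\psi\|$ after discarding the favorable term $-\tfrac{\e S_A}{4}e^{-\tau/2}\|y^{1/2}\psi\|^2\le0$. For the singular term I would use the weighted Cauchy–Schwarz inequality $|\inner{h\tilde\zeta}{\psi}|\le(\int|h|\tilde\zeta^2)^{1/2}(\int|h|\psi^2)^{1/2}$ with $h=\tfrac{A}{y^2}-\tfrac{A}{(y+\cE)^2}$; the same $R$-splitting yields $\int|h|\tilde\zeta^2\le C\e^{1/8}e^{-\tau/16}$ and $\int|h|\psi^2\le C\e^{1/8}e^{-\tau/16}(1+\|\psi/y\|_{L^\infty([0,\theta])}^2)$. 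Collecting terms and applying Young's inequality,
\[
	\frac{d}{d\tau}\|\psi\|^2 + \lambda_A\|\psi\|^2 \le C\e^{1/8}e^{-\tau/16}\Big(1+\sup_{\tau'\le\tau}\|\psi(\tau')/y\|_{L^\infty([0,\theta])}^2\Big),
\]
so, since $\psi(0)=0$, $\|\psi(\tau)\|^2\le C\e^{1/8}e^{-\beta\tau}\big(1+\sup_{\tau'\le\tau}\|\psi(\tau')/y\|_{L^\infty([0,\theta])}^2\big)$ with $\beta=\min\{\lambda_A,1/16\}$ (up to an arbitrarily small loss in $\beta$ at the borderline).

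The last step turns this into a pointwise bound and closes the loop. For $y\ge\theta/2$, interior parabolic regularity for~\eqref{e:tildezeta_r3} (as in the proof of \Cref{lem:l2zetatilde}, and using that the coefficients of $g$ decay like $e^{-\tau/4}$ on $\{y\ge\theta/4\}$ by \Cref{l:curly_E}) bounds $|\psi(\tau,y)|$ by $C\big(\|\psi\|_{L^2([\max\{0,\tau-1\},\tau]\times[\theta/4,2\theta])}+\e^{1/2}e^{-\tau/4}\big)$. Near the origin, I would run a comparison argument in the spirit of \Cref{l:normal_bound_r3}: the $\psi$-equation on $[0,\theta]$ becomes uniformly parabolic with bounded coefficients once the two $A/y^2$ singularities of $M_A\psi$ and of $h\tilde\zeta$ cancel (they do, since $\tilde\zeta$, $Q$, $\psi$ vanish linearly at $0$ and the surviving potential is the bounded $A/(y+\cE)^2$), so $\overline\psi(\tau,y)=\big(\|\psi\|_{L^\infty([0,\tau]\times\{\theta\})}/\theta + C\e e^{-\tau/2}\big)y$ is a super-solution there; comparison gives $\|\psi(\tau)/y\|_{L^\infty([0,\theta])}\le C\big(\|\psi\|_{L^\infty([0,\tau]\times\{\theta\})}+\e e^{-\tau/2}\big)$. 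Feeding the $L^2$ decay back through these two facts, a continuity/bootstrap argument in $\tau$ starting from $\psi(0)=0$ (valid for $\e$ small, and iterated to upgrade the exponent of $\e$ to $1/8$, using $\alpha>1$) removes the supremum on the right and yields $|\psi(\tau,y)|\le C\e^{1/8}e^{-\beta'\tau}\min\{1,y\}$. Combining with the first paragraph, $|\tilde\zeta(\tau,y)-\tilde\zeta(0,y)|\le|(a(\tau)-1)Q(y)|+|\psi(\tau,y)|\le C\e^{1/8}Q(y)+C\e^{1/8}e^{-\beta'\tau}$, which is the claim (with the $\beta$ of the statement taken to be $\beta'$).

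The main obstacle is the circular dependence in the last two steps: the singular coefficient $\tfrac{A}{y^2}-\tfrac{A}{(y+\cE)^2}$ in the energy estimate can only be absorbed against the near-origin linear control $\|\psi/y\|_{L^\infty([0,\theta])}$ of $\psi$, while that control is itself obtained from the $L^2$ decay of $\psi$ via parabolic regularity and the near-origin comparison. Steps two and three must therefore be run together and closed by a bootstrap in $\e$ (or a continuity argument in $\tau$, then iterated to sharpen the power of $\e$), and the delicate point is verifying that $\overline\psi=(\cdots)y$ is genuinely a super-solution of the $\psi$-equation on $[0,\theta]$, i.e. that the $A/y^2$ singularities cancel exactly so that the comparison principle applies.
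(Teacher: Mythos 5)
Your overall architecture is the same as the paper's: decompose $\tilde\zeta = a(\tau)Q + \psi$ with $\psi\perp Q$, bound $|a'(\tau)|\leq C\e^{1/8}e^{-\tau/16}$ by testing against $Q$ and reusing the \Cref{l:curly_E}-splitting, then run the spectral-gap energy estimate of \Cref{l:eigenvector_r3} for $\psi$ and upgrade to $L^\infty$ by parabolic regularity. The first paragraph and the final upgrade are fine. The gap is in your treatment of the singular term $\inner{\bigl(\tfrac{A}{y^2}-\tfrac{A}{(y+\cE)^2}\bigr)\tilde\zeta}{\psi}$. Your weighted Cauchy--Schwarz forces you to control $\|\psi/y\|_{L^\infty([0,\theta])}$, and the bootstrap you build to get it rests on the claim that in the $\psi$-equation the $A/y^2$ singularities ``cancel exactly,'' leaving bounded coefficients, so that $\overline\psi = (\cdots)\,y$ is a supersolution. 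That claim is false as stated: writing $\tilde\zeta = aQ+\psi$, the term $\tfrac{A}{y^2}\tilde\zeta$ cancels only the piece $\tfrac{A}{y^2}\psi$ of $M_A\psi$; the forcing $a(\tau)\tfrac{A}{y^2}Q(y)\sim C\,y^{\alpha-2}$ survives and is unbounded near $y=0$ whenever $\alpha\in(1,2)$ (i.e.\ $A<2$), and in addition the zeroth-order coefficient $\tfrac{y^2}{16}+\tfrac14-\tfrac{1+\alpha}{2}$ is negative near the origin, so the linear barrier is not a supersolution of the reduced equation either. The comparison step, which you yourself flag as the delicate point, therefore does not go through, and with it the continuity-in-$\tau$/iteration-in-$\e$ loop (whose claimed sharpening of the power of $\e$ to $1/8$ is also not justified).

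The fix is that the quantity you are bootstrapping for is already available, so the entire circular step can be deleted: $\psi = \tilde\zeta - aQ$ with $|a|\leq\|\tilde\zeta\|\,\|Q\|\leq 2$ by \Cref{lem:l2zetatilde}, $|\tilde\zeta|\leq C\min\{1,y\}$ by \Cref{lem:linftyzetatilde} (built on \Cref{l:normal_bound_r3}), and $Q(y)=Z^{-1}y^\alpha e^{-y^2/8}\leq Cy$ near $0$ since $\alpha>1$; hence $\|\psi/y\|_{L^\infty([0,\theta])}\leq C$ with no circularity, and your energy inequality closes to $\|\psi\|\leq C\e^{1/8}e^{-\beta\tau}$ directly. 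This is essentially what the paper does, except it avoids the weighted Cauchy--Schwarz altogether by writing $\inner{h\tilde\zeta}{\psi}=\inner{h\tilde\zeta}{\tilde\zeta-aQ}$ and bounding the two pieces exactly as in \Cref{lem:l2zetatilde}, using the closed pointwise bound on $\tilde\zeta$ and the explicit $Q$. So: same decomposition and same skeleton, but your route inserts an unnecessary and, as written, broken comparison/bootstrap where a one-line triangle inequality suffices.
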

\begin{proof}
We aim to use the spectrum of $M_A$ in order to conclude.  Decompose $\tilde \zeta$ into its $Q$ component and its orthogonal component, letting
\[
	\phi = \inner{\tilde \zeta}{Q}
		\quad \text{and} \quad
	\tilde \zeta =  \phi Q + \psi.
\]
To estimate the $Q$ component, multiply~\eqref{e:tildezeta_r3} by $Q$ and integrate by parts to find
\be\label{e.c7273}
\begin{split}
	|\phi'(\tau)|
		&= A \int_{\R^+} \left| \frac{1}{y^2} - \frac{1}{(y+\cE)^2}\right| \tilde \zeta(y) Q(y) \, dy
			+ \left|\e S_A e^{-\tau/2} \langle Q_y + \frac{y}{4} Q\vert \tilde \zeta\rangle\right|,\\
		&\leq A \int_{\R^+} \left|\frac{1}{y^2} - \frac{1}{(y+\cE)^2}\right| \tilde \zeta(y) Q(y) \, dy
			+ \e S_A e^{-\tau/2} \Vert Q_y + \frac{y}{4} Q \Vert \Vert  \tilde \zeta (\tau,\cdot)\Vert,
%
\end{split}
\ee
The estimate of the first term goes exactly as in~\Cref{lem:l2zetatilde}, where we applied \Cref{l:curly_E} to conclude.  Hence, we omit the details and assert that
\[
	\int_{\R^+} \left|\frac{1}{y^2} - \frac{1}{(y+\cE)^2}\right| \tilde \zeta(y) Q(y) \, dy
		\leq \e^{1/8} e^{-\tau/16} \left(\|\max\{1,y^{-1} \tilde \zeta\|_{L^\infty(\R_+ \times \R_+)} + \|\tilde \zeta\|\right).
\]
Recalling \Cref{lem:l2zetatilde} and \Cref{lem:linftyzetatilde} and combining with~\eqref{e.c7273} yields
\[
	|\phi'(\tau)|
		\leq C \e^{1/8} e^{-\tau/16}.
\]
Integrating this differential inequality, we deduce that
\begin{equation*}
\vert \phi(\tau) - \phi(0) \vert \leq C \e^\frac18.
\end{equation*}

We now consider $\psi$.  Since $\psi \perp Q$, we have $\inner{M_A \tilde \zeta}{\psi} = \inner{M_A \zeta}{\zeta} \geq \lambda_A \|\psi\|^2$.  Hence, multiplying~\eqref{e:tildezeta_r3} by $\psi$ and integrating, we find
\begin{align*}
&\frac12 \frac{d}{dt} \Vert \psi \Vert^2 + \inner{M_A \psi}{\psi}
		= \inner{\left(\frac{A}{y^2} - \frac{A}{(y + \cE)^2}\right) \tilde \zeta}{ \psi} + S_A \eps e^{-\frac{\tau}{2}} \inner{\phi Q_y + \psi_y  - \frac{y}{4} \left( \phi Q + \psi\right)}{\psi}.
\end{align*}
The second inner product is estimating using that $\inner{\psi_y}{\psi} = 0$, $\inner{y\tilde\zeta}{\psi} \geq 0$, and the Cauchy-Schwarz inequality.  On the other hand, arguing as above using \Cref{l:curly_E}, we bound the first time by
\begin{equation*}
	\inner{\left(\frac{A}{y^2} - \frac{A}{(y + \cE)^2}\right) \tilde \zeta}{\psi}
		=  \inner{\left(\frac{A}{y^2} - \frac{A}{(y + \cE)^2}\right) \tilde \zeta}{\tilde \zeta - \phi Q}
		\leq 
			C \e^{1/8} e^{-\tau/16}.
\end{equation*}
Using all ingredients above and recalling that $\inner{M_A \psi}{\psi} \geq \lambda_A \|\psi\|^2$, we obtain the differential inequality
\[
	\frac{1}{2} \frac{d}{dt} \|\psi\|^2
		+ \lambda_A \|\psi\|^2
		\leq C \e^{1/8} e^{-\tau/16}
			+ C \e e^{-\tau/2} \|\psi\|.
\]
Solving this differential inequality, there is $\beta>0$ such that
\[
	\|\psi\| \leq C \e^{1/8} e^{-\beta \tau}.
\]
The claim then follows by using parabolic regularity theory in order to upgrade the $L^2$ convergence to $L^\infty$ convergence.
\end{proof}

We are now able to conclude the proof of \Cref{p:supersoln_r3} using all ingredients above.
\begin{proof}[Proof of \Cref{p:supersoln_r3}]
Choose $t_0, C_0>0$ sufficiently large such that all above lemmas hold.  Fix $M>0$ to be determined.  Let $\tilde \zeta$ be the solution of~\eqref{e:tildezeta_r3} and define, for $t \geq t_0$ and $x \geq 0$,
\[
	\overline v(t,x)
		= M\left(\frac{t}{t_0}\right)^{\frac{\alpha}{2}} e^{- \frac{x^2}{8 t}} \tilde \zeta(\log(t/t_0), x/\sqrt t).
\]
By construction, we have that \Cref{p:supersoln_r3}.(i) holds.  

Next we examine the ordering of $v$ and $\overline v$ at $t= t_0$.  From \Cref{l.heat_bounds}, we have, for all $x \geq 1$
\[
	v(t_0,x)
		\leq \frac{C\sqrt{t_0}}{x+2t_0 + \sqrt{t_0}} e^{t_0 + x - \frac{(x + 2t_0)^2}{4t_0}}
		\leq \frac{C\sqrt{t_0}}{x+2t_0} e^{t_0 - \frac{x^2}{4t_0}}.
\]
On the other hand,
\[
	\overline v(t_0,x)
		= \frac{M}{Z} \left(\frac{x}{\sqrt{t_0}}\right)^\alpha e^{- \frac{x^2}{4t_0}}.
\]
It is clear that, up to increasing $M$, $\overline v(t_0,x) \geq v(t_0,x)$ for all $x\geq 1$.  This yields \Cref{p:supersoln_r3}.(ii).

The proof of \Cref{p:supersoln_r3}.(iii) follows similarly, using \Cref{lem:Q_tildezeta} in addition.  We omit the details.

Finally, we consider \Cref{p:supersoln_r3}.(iv).  If $x\geq \sqrt t$, we have
\[
	\overline v(t,x)
		\leq M \left(\frac{t}{t_0}\right)^{\frac{\alpha}{2}} \|\tilde \zeta\|_{L^\infty(\R_+\times\R_+)},
\]
which is less than $(1+x)^\alpha$, finishing the proof in this case.  If $x \leq \sqrt t$, we apply \Cref{lem:Q_tildezeta} to find
\[
	\overline v(t,x)
		\leq M \left(\frac{t}{t_0}\right)^{\frac{\alpha}{2}} \left(Q(x/\sqrt t) + C \e^{1/8} e^{-\beta \tau}\right)
		\leq M \left(\frac{t}{t_0}\right)^{\frac{\alpha}{2}} \left(\frac{1}{Z} \left(\frac{x}{\sqrt t}\right)^\alpha + C \e^{1/8} e^{-\beta\tau}\right).
\]
The conclusion follows from the above.  The proof is finished.
\end{proof}

\section{The case $r\in (1,3)$: \Cref{thm:main_delay}.(iii)}\label{s.lw3}

To begin, we give a brief (heuristic) description of the proof, and, along the way, define the key concepts that we rely on in the sequel.  Two important constants in our analysis are
\[
	\gamma = \frac{2}{1+r}
		\quad\text{ and } \quad
	\beta = \frac{3-r}{1+r}.
\]

We begin by changing variables to the moving frame and removing an exponential factor; that is, we let $u(t,x+2t - s(t)) = \nu e^{-x} v(t,x)$. Then
\be\label{e:1141}
	v_t + \dot s(t)(v_x - v)
		= v_{xx} - A (x + \log(1/v))^{-(r-1)} v.
\ee
From~\cite{BouinHendersonRyzhik}, we know understand that the correct length scale to look on is $x \sim t^\gamma$.  Hence, let
\be
	\varphi(\tau, y)
		= -\frac{1}{\tau} \log v(\tau^{1/\beta}, y \tau^{\gamma/\beta}).
\ee
Note very importantly the large deviations flavour of this ansatz: this is where the mixing of scales appears and where $\gamma > \frac12$ plays a role. The correct length scale to look is larger than the diffusive one. Nevertheless, this can be connected to the standard diffusive change of variables used in the cases $r\geq 3$ in the following way: an equivalent (but computationally more complicated) change of variables is $\tau = \frac{1}{\beta} \left[ (1+t)^\beta - 1\right]$ and $y = \frac{x}{(1+t)^\gamma}$, which yields the diffusive variables in the limit $r\to3$.

Recall from~\cite{BouinHendersonRyzhik} that the delay should be $O(t^\beta)$.  Set (again, heuristically) $s(t)= \theta t^\beta$, where the goal is to determine $\theta$ so that $w = O(1)$ near the origin (since we expect $u$ to be $O(1)$ near the front). This requires $\varphi(\tau, 0) = O(1/\tau)$ and gives the following
\be\label{e:phi_sr}
	\beta\tau \varphi_\tau - \gamma y \varphi_y + \beta \varphi  + \theta \beta \left( \tau^{- \frac{r-1}{3-r}} \varphi_y + 1\right)  
		= \tau^{-1} \varphi_{yy} - |\varphi_y|^2 + A \left(y + \tau^{-\frac{r-1}{3-r}} \varphi\right)^{-(r-1)}.
\ee
Formally taking $\tau \to \infty$ (and assuming that $\tau \varphi_\tau \to 0$ since we expect equilibrium dynamics), we obtain the limit equation
\be\label{e:c11311}
	\begin{aligned}
		&| \varphi_y|^2 - \gamma y \varphi_y - (A y^{1-r} - \beta(\theta + \varphi))= 0
			\qquad &\text{ in } (0,\infty),\\
		&\varphi(0) = 0.
	\end{aligned}
\ee
We now explain how to guess the correct value of $\theta$. This comes by comparing the asymptotics of the solutions of such an ODE to the expected asymptotics of $w$.

One solution of this quadratic polynomial in \eqref{e:c11311} is as follows. Let
\be\label{e:Gamma}
	\Gamma(y)
		= \frac{\gamma^2 y^2}{4\beta} + \frac{A y^{1-r}}{\beta}
			\quad\text{ for all } y >0,
\ee
and define $\ophim_\theta$ (subscript often omitted later for legibility) via
\be\label{e:phi}
\begin{split}
	&\ophim_y(y)
		=  \frac{ \gamma y}{2} - \sqrt{ \beta (\Gamma - \phi)}, \qquad y>0,\\
	&\ophim(0) = \theta.
\end{split}
\ee
Observe that with such a definition, $\ophim - \theta$ solves \eqref{e:c11311}.  We note a subtle notational choice here: $\varphi$ refers to the (expected) limiting solution, while $\phi$ refers to any of the solutions to the shifted family of initial value problems~\eqref{e:phi}.

The global existence of such a $\ophim$ on $\R^+$ as a function of the initial data $\theta$, is discussed in \ref{s:phi}. If we expect convergence of $u$ to a traveling wave in the moving frame, it is then natural in view of \Cref{p:decayTW} to expect that $v(t,x) \sim \exp\Big\{ \frac{2A^{\frac12}}{3-r} x^{\frac{3-r}{2}} \Big\}$ 
close to zero. This fits exactly with the asymptotics of $\ophim$ near $y=0$ for any compatible value of $\theta$.  We make two observations from this.  First, since this works for all $\theta$, $\theta$ cannot be defined at this stage. Second, we arrived at~\eqref{e:phi} from~\eqref{e:c11311} via the quadratic formula, which involves choosing a root.  Had we chosen the {\em other} root, the traveling wave asymptotics would {\em not} hold, allowing us to conclude that we have chosen the correct root for small $y$.

However, we expect $v(t,x) \sim e^{-\frac{x^2}{4t}}$ very far ahead of the front.  This corresponds to $\varphi \sim y^2/4$ when $y \gg 1$. 
Unfortunately, if $\theta\gg1$ then $\phi$ cannot be extended as a solution to large $y$, and, even for those $\theta$ for which it can, $\phi$ does not grow quadratically. Thus this $\ophim$ cannot the expected $\varphi$ when $\tau$ goes to infinity. We now explain how to solve this issue and this will give the value of $\theta$ to be chosen. 

Using $\ophim$, we define
\be\label{e:critical_theta}
	\Theta
		= \sup \Big\{ \theta \in \R : \ophim_\theta < \Gamma(y) \text{ for all } y \geq 0\Big\},
\ee
where the curve $\Gamma$ is given by~\eqref{e:Gamma}.  It is clear that if $\theta < \Theta$ then $\ophim$ exists on $[0,\infty)$. The positivity and finiteness of $\Theta$ is shown in \Cref{s:phi}.

By a continuity argument, if $\theta = \Theta$, $\ophim$ touches the curve $\Gamma$ ``tangentially''.  In the sequel, we show that there is only one touching point $\bar y$. Thus, when $\theta = \Theta$, we construct the $C^1$ globally defined solution $\ophic$ which is equal to $\ophim$ to the left of $\bar y$ and solves
\be\label{e:Phi}
\begin{split}
	&\ophic_y(y)
		= \frac{\gamma}{2} y + \sqrt{ \beta (\Gamma - \ophic)}
			\qquad \text{ in } (\bar y, \infty).
\end{split}
\ee
With this definition, $\ophic$ solves \eqref{e:c11311}, $\ophic \sim \Theta - \frac{2\sqrt A}{3-r} y^\frac{3-r}{2}$ when $y\sim 0$, and $\ophic \sim y^2/4$ when $y\gg1$.  Hence $\ophic$ is the solution of~\eqref{e:c11311} that should arise from $\varphi$ when taking $\tau \to \infty$, above. Making this precise choice $\theta = \Theta$ is the only way to make this happen.

It is now heuristically clear that $\theta = \Theta$ is the correct choice of the shift.  Before continuing, we simply note that $\Theta = \Theta_r A^\gamma$ for $\Theta_r$ independent of $A$ is easy to see by a simple scaling argument.  We stress, though, that $A$ may not be scaled out of the original equation~\eqref{e:main}; it is a feature of our reduced characterization of the delay coefficient above that the scaling in $A$ may be removed.

The remaining part of this section will construct rigorously $\ophim$ and $\ophic$ and provide some qualitative properties that are needed later on to construct sub- and super- solutions in the shifted frame.


\subsection{Construction of $\Theta$ and behaviors of $\ophim$ and $\ophic$}\label{s:phi}

In this section, we let $\bar y = (1+r)^\gamma A^{\frac{\gamma}{2}}$, which plays a special role in the analysis.  Indeed, it is where $\phi_\Theta$ and $\Gamma$ touch (see \Cref{p:phi}).

\subsubsection{Existence and qualitative properties of $\ophim$}


For the proof of the upper bound on the front location, it is enough to work only with $\phi$.  Despite the fact that $\Phi$ is expected to provide the asymptotics of $\varphi$ as $\tau \to \infty$, it is somewhat easier to work with $\phi$ as its growth as $y\to\infty$ is slower, and thus, it is easier to ``fit'' a supersolution built from $e^{-\phi}$ over $u$.  The requisite bounds are below.  Hence we obtain bounds of $\phi$ on all of $(0,\infty)$ instead of simply on $(0,\bar y)$.

\begin{prop}\label{p:phi}
%
%
The constant $\Theta$ is positive and finite.  The solution $\phi = \phi_\Theta$ is defined on all of $\R_+$, is strictly less than $\Gamma$ on $\R_+\setminus \{\overline y\}$, and, at $\overline y$, touches $\Gamma$ tangentially; that is, $\Gamma(\overline y) = \phi(\overline y)$ and $\phi'(\overline y) = \Gamma'(\overline y)$.  In addition, $\phi$ satisfies:
%
%
\begin{enumerate}
\item $C^{-1}
		\leq \ophim
		\leq Cy^{\frac{1+r}{2}}\1_{[C^{-1},\infty]}
			+ C\1_{[0,C^{-1}]}$
		
\item $\displaystyle
		- C y^\frac{1-r}{2} \1_{[0,C]}
			\leq \phi_y
			\leq - C^{-1} y^\frac{1-r}{2} \1_{[0,C^{-1}]}$,
\item $\displaystyle
	 - C\1_{[C^{-1},\infty]}
		+ C^{-1} y^{-\frac{1+r}{2}} \1_{[0,C^{-1}]} 
			\leq \phi_{yy}
			\leq  Cy^{-\frac{1+r}{2}}$.
\end{enumerate}
\end{prop}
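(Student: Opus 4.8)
The plan is to study the initial value problem~\eqref{e:phi} directly as an ODE, exploiting the natural ordering structure: two solutions $\phi_{\theta_1}$ and $\phi_{\theta_2}$ with $\theta_1 < \theta_2$ remain ordered as long as both exist, and the square-root nonlinearity means a solution ceases to exist precisely when it reaches the barrier $\Gamma$. First I would establish existence and uniqueness of $\phi_\theta$ locally near $y=0$: the right-hand side of~\eqref{e:phi} is singular there because $\Gamma(y) \sim A y^{1-r}/\beta \to \infty$, but the dominant balance $\phi_y \sim -\sqrt{\beta \Gamma} \sim -\sqrt A\, y^{(1-r)/2}$ integrates to the finite quantity $\phi(y) - \theta \sim -\tfrac{2\sqrt A}{3-r} y^{(3-r)/2}$, so one can set up a fixed-point argument in a weighted space (or change variables to desingularize). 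This also immediately gives the small-$y$ parts of estimates~(1)--(3): $\phi \to \theta$, $\phi_y \sim -\sqrt A\, y^{(1-r)/2}$, and $\phi_{yy} \sim \tfrac{\sqrt A(r-1)}{2} y^{-(1+r)/2}$, all of which require choosing the constant $C$ large relative to $\theta = \Theta$.

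Next I would analyze the barrier curve $\Gamma$ itself. A direct computation shows $\Gamma'(y) = \tfrac{\gamma^2 y}{2\beta} - \tfrac{A(r-1)}{\beta} y^{-r}$, and the two terms of $\Gamma$ balance at exactly $\bar y = (1+r)^\gamma A^{\gamma/2}$ (one checks $\tfrac{\gamma^2 \bar y^2}{4} = A \bar y^{1-r}$ reduces to $\bar y^{1+r} = \tfrac{4A}{\gamma^2}$, i.e. $\bar y^{1+r} = (1+r)^2 A$, consistent with $\gamma = 2/(1+r)$). The key algebraic fact I would verify is that when $\phi$ touches $\Gamma$ at a point $y_*$ (so $\phi(y_*) = \Gamma(y_*)$), the slopes satisfy $\phi_y(y_*) = \tfrac{\gamma y_*}{2}$ from~\eqref{e:phi} while $\Gamma'(y_*) = \tfrac{\gamma y_*}{2} + \tfrac{1}{\sqrt{\beta\Gamma}}(\tfrac{\gamma^2 y_*}{4\beta} - \tfrac{A(r-1)}{\beta}y_*^{-r})\cdot\sqrt{\beta}$ — actually it is cleaner to compute $\tfrac{d}{dy}(\Gamma - \phi)$ at the touching point and observe it vanishes iff $\Gamma'(y_*) = \tfrac{\gamma y_*}{2}$, which happens iff $y_* = \bar y$. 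This shows the touching, if it occurs, is necessarily tangential and necessarily at $\bar y$; combined with a convexity/monotonicity argument on $\Gamma - \phi$ it gives uniqueness of the touching point.

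Then I would define $\Theta$ via~\eqref{e:critical_theta} and prove $0 < \Theta < \infty$. Finiteness: for $\theta$ large, the small-$y$ expansion shows $\phi_\theta(\bar y) \approx \theta - \tfrac{2\sqrt A}{3-r}\bar y^{(3-r)/2}$, which exceeds $\Gamma(\bar y)$ once $\theta$ is large enough, so such $\theta$ are excluded — hence $\Theta < \infty$. Positivity: for $\theta$ sufficiently negative (or even $\theta$ slightly above some threshold), one shows $\phi_\theta$ stays strictly below $\Gamma$ globally — here one uses that far from the origin $\Gamma(y) \sim \tfrac{\gamma^2 y^2}{4\beta}$ grows quadratically while $\phi_y \le \tfrac{\gamma y}{2}$ forces $\phi$ to grow at most like $\tfrac{\gamma^2 y^2}{4}\cdot\tfrac{1}{2}$... more carefully, one shows the gap $\Gamma - \phi$ is bounded below along the flow by a Grönwall-type estimate. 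By the ordering property and continuity of solutions in $\theta$, the supremum $\Theta$ is attained and $\phi_\Theta$ exists on all of $\R_+$ with $\phi_\Theta \le \Gamma$; it must actually touch $\Gamma$ somewhere (else $\theta$ could be increased slightly, contradicting maximality — this uses continuous dependence and the fact that a uniform gap is an open condition on compact sets plus the quadratic growth of $\Gamma$ to control large $y$), and by the previous paragraph the touching is tangential at $\bar y$.

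Finally, the large-$y$ halves of~(1)--(3): since $\phi_\Theta \le \Gamma$ with equality only at $\bar y$, for $y$ bounded away from $0$ one has $\phi_y = \tfrac{\gamma y}{2} - \sqrt{\beta(\Gamma - \phi)} \le \tfrac{\gamma y}{2}$, giving the upper bound $\phi \le C y^{(1+r)/2}$ (integrating, using $\Gamma \sim y^2 = y^{(1+r)/2 \cdot \text{something}}$... note $2 = \tfrac{1+r}{2}\cdot\tfrac{4}{1+r} = \gamma^{-1}\cdot 2$, so one must be slightly careful — actually $\phi \lesssim y^2$ and $y^2 \le C y^{(1+r)/2}$ fails for large $y$ when $r>3$, but here $r<3$ so $(1+r)/2 < 2$ and it is the *other* direction; I would instead use the tangency to get $\phi$ close to $\Gamma \sim y^2$, hence $\phi \asymp y^2$, and since $r \in (1,3)$, $y^2 \le C y^{(1+r)/2}$ also fails — so the exponent $(1+r)/2$ in~(1) must come from a different bound valid on a bounded range, with the $\1_{[C^{-1},\infty]}$ meaning $C y^{(1+r)/2}$ dominates $\phi$ for all $y \ge C^{-1}$, which since $(1+r)/2 \in (1,2)$ requires $\phi \lesssim y^{(1+r)/2}$, i.e. $\phi$ grows *slower* than quadratically — so in fact $\phi$ does not track $\Gamma$ at infinity; rather, once past $\bar y$, $\phi$ with the *minus* root falls away from $\Gamma$). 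The cleanest route is: for $y \ge \bar y$, write $g = \sqrt{\beta(\Gamma-\phi)} \ge 0$, derive the ODE for $g$ from~\eqref{e:phi}, and show $g$ stays comparable to $y$, forcing $\phi_y$ to be $o(y)$ and $\phi$ to grow subquadratically, precisely at rate $y^{(1+r)/2}$ matching the barrier's $Ay^{1-r}$ term's integral scale... I would work this out via the substitution $\phi = \Gamma - \tfrac{1}{\beta}g^2$ and a Grönwall argument. The bounds on $\phi_{yy}$ for large $y$ follow by differentiating~\eqref{e:phi} once more and feeding in~(1)--(2).

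\textbf{Main obstacle.} The delicate point is establishing that $\phi_\Theta$ is \emph{globally} defined and \emph{touches} $\Gamma$ — i.e. that the supremum defining $\Theta$ is attained with the barrier actually reached. This requires a careful compactness/continuous-dependence argument: one must rule out that $\phi_\theta$ blows up (reaches $\Gamma$) for \emph{every} $\theta$, and simultaneously that the critical $\phi_\Theta$ stays strictly below $\Gamma$ everywhere (which would contradict maximality). Controlling the behavior uniformly as $y \to \infty$ — where $\Gamma$ grows quadratically but $\phi$ grows only like $y^{(1+r)/2}$ — is what makes this subtle, since one cannot simply invoke local ODE theory on a compact interval; a Grönwall-type bound on the gap $\Gamma - \phi$ that is uniform in large $y$ is needed, and getting its direction right (bounded below for $\theta < \Theta$, vanishing at $\bar y$ for $\theta = \Theta$) is the crux.
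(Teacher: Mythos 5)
Your overall architecture (desingularized local solution at $y=0$, the monotone family $\theta\mapsto\phi_\theta$, $\Theta$ as a supremum, a limiting construction of $\phi_\Theta$, and the algebraic fact that a tangential contact point must solve $\Gamma'(y_*)=\tfrac{\gamma y_*}{2}$, hence $y_*=\bar y$) matches the paper, and your finiteness argument (a lower barrier $\phi_\theta\ge\theta-\tfrac{2\sqrt A}{3-r}y^{\frac{3-r}{2}}$ forcing a crossing of $\Gamma$ by $\bar y$ for large $\theta$) is a legitimate, even simpler, alternative to the paper's argument via $\partial_\theta\phi_\theta\ge1$. But there is a genuine gap at the point you dismiss in one line: the bounds on $\phi_{yy}$ in item (3) do \emph{not} follow by ``differentiating \eqref{e:phi} once more and feeding in (1)--(2).'' Differentiating gives
\[
	\phi''=\frac{\gamma-\beta}{2}-\frac{\beta^{1/2}}{2}\,\frac{\Gamma'-\frac{\gamma}{2}y}{\sqrt{\Gamma-\phi}},
\]
and at the tangency point $\bar y$ both the numerator and the denominator vanish, so the bounds on $\phi$ and $\phi_y$ give no control whatsoever there; the two-sided bound $-C\le\phi''\le Cy^{-\frac{1+r}{2}}$ claimed in a neighborhood of $\bar y$ (and used later, in the supersolution of \Cref{l:supersolution_small_r}) requires showing that the ratio $(\Gamma'-\tfrac{\gamma}{2}y)/\sqrt{\Gamma-\phi}$ stays bounded as the gap closes. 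This is exactly the delicate step the paper spends the end of its proof on: working with $\phi_\theta$ for $\theta<\Theta$ (where everything is smooth), locating the extrema of that ratio and showing $|\Gamma'-\tfrac{\gamma}{2}y|\le C\sqrt{\Gamma-\phi}$ wherever the gap is small, then passing to the limit $\theta\nearrow\Theta$. Without an argument of this type your item (3) is unproved precisely where it matters.

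Two smaller points. First, your positivity argument for $\Theta$ does not close: taking $\theta$ negative only shows the admissible set is nonempty, and ``$\theta$ slightly above some threshold'' is not an argument. What is needed is a \emph{positive} admissible $\theta$; the cheap route (as in the paper) is that for $\theta=0$ the solution is decreasing and negative, hence global, and by continuous dependence on a compact interval a small $\theta>0$ also produces a solution that becomes negative in finite $y$ — and once $\phi\le0$ it can never re-cross $0$ (at a zero one has $\phi_y=\tfrac{\gamma y}{2}-\sqrt{\beta\Gamma}<0$), so it stays below $\Gamma$ forever. Second, your large-$y$ discussion ends with the guess that $\phi$ grows ``precisely at rate $y^{(1+r)/2}$''; the actual rate coming from $\phi_y\approx\tfrac{\beta}{\gamma}\tfrac{\phi}{y}$ is $y^{\beta/\gamma}=y^{\frac{3-r}{2}}$, which is smaller, so the stated upper bound in (1) still holds, but you should also record the global lower bound $\phi\ge C^{-1}$ (the minimum of $\phi$ is attained where $\phi=\tfrac{A}{\beta}y^{1-r}>0$), which your sketch does not address. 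On the other hand, your worry in the ``main obstacle'' paragraph about uniform control for large $y$ is resolved by your own observation: for $y>\bar y$ one has $\Gamma'>\tfrac{\gamma}{2}y\ge\phi_y$, so the gap $\Gamma-\phi$ is increasing past $\bar y$ and global existence reduces to a compact-interval question.
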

\begin{remark}\label{r:Theta}
\Cref{p:phi} yields a second characterization of $\Theta$ is that $\Theta =\phi(0)$ where $\phi$ solves the terminal value problem
\[
	\begin{split}
		&\ophim'(y)
			=  \frac{\gamma}{2} y - \sqrt{ \beta (\Gamma - \phi)}, \qquad y \in (0, \bar y),\\
		&\ophim(\bar y) = \Gamma(\bar y).
	\end{split}
\]
The uniqueness of such a solution is somewhat subtle but follows from the fact that such a solution necessarily has $\phi'(\overline y) > 0$.  We omit the details.
\end{remark}
\begin{proof}[Proof of \Cref{p:phi}]
For any $\theta \in \R$, there exists a solution $\phi = \phi_\theta$ of the Cauchy problem~\eqref{e:phi} by Carath\'eodory's theorem on some interval $[0,y_\theta]$.  We note that the standard Cauchy-Lipschitz theorem does not apply at $y=0$ due to the singularity in $\Gamma$. 
It is easy to check that this is unique due to the fact that the singularity at $y=0$ is integrable; indeed, the right hand side of~\eqref{e:phi} is $\sim \sqrt -A y^\frac{1-r}{2}$ near $y=0$.  Finally, we note that, by the standard Cauchy-Lipschitz theorem, $y_\theta$ can be increased for as long as  $\phi < \Gamma$.

%


We now justify that $\Theta$ 
is positive and finite. First, if $\theta = 0$, the phase portrait shows that $\ophim$ is then decreasing, thus always negative. As a result, $\ophim$ is defined on $\R^+$.  By continuity with respect to initial conditions, if $0 < \theta \ll 1$, then $\ophim$ eventually becomes negative and, thus, exists for all $y$.  We conclude that $\Theta > 0$.

Assume by contradiction that $\Theta$ is infinite. This means that for all $\theta \in \R$, the solution of the ODE is globally defined.  Let $\Psi: \R \to C^1([0,\infty))$ be defined by $\Psi(\theta) = \ophim_\theta$; that is, the solution of~\eqref{e:phi} with $\ophim_\theta(0) = \theta$.  Since solutions cannot cross, $\Psi$ is increasing. Moreover, it is then direct from the definition of $\ophim_\theta$ that for all $y \in \R^+$, one has $\Gamma - \phi_\theta \geq 0$. As a consequence, the sequence of functions, defined on $\R_+^*$, $\theta \mapsto \phi_\theta(\cdot)$ is convergent. However, $g :=\partial_\theta \phi_\theta$ satisfies 
\begin{align*}
	&g_y(y) = \frac{2\beta^\frac12}{\sqrt{ \Gamma - \phi }} g(y)
		\qquad \text{and}\qquad
	g(0) = 1,
\end{align*}
so that
\[
	g(y) = \exp\left\{\int_{0}^y \frac{2\beta^\frac12}{\sqrt{ \Gamma(z) - \phi(z) }}  \, dz  \right\}
		\geq 1.
\]
This is a contradiction with the convergence of the sequence.

In order to construct $\phi_\Theta$, we use a limiting procedure as follows.  If $\theta < \Theta$, then $\phi_\theta<\Gamma$ by the definition of $\Theta$.  The equation yields that $\phi_\theta$ is $C_{\rm loc}^{1,1/2}$ with bounds that are uniform in $\theta$.  We may thus take $\theta \nearrow \Theta$ to obtain a function $\phi_\Theta$ that is $C_{\rm loc}^{1,1/2}$.  Using the expansion $\phi_\theta \sim \theta - \frac{2}{3-r} y^\frac{3-r}{2}$, we see that $\phi_\Theta(0) = \Theta$ and $\phi_\Theta$ is continuous up to $0$.  Hence, $\phi_\Theta$ satisfies~\eqref{e:phi}.

If $\phi_\Theta$ does not touch $\Gamma$ then we can further increase $\theta$, contradicting the choice of $\Theta$.  By continuity, $\phi_\Theta$ must touch $\Gamma$ tangentially, in which case the two curves necessarily touch at $\overline y$ since this is the only solution to $\phi_\Theta'(y) = \Gamma'(y)$.

Finally, we analyze bounds on $\phi'' = \phi_\Theta''$.  Those bounds away from $\bar y$ are simple to establish and so we omit them.  We focus instead on a uniform bound on $\phi''$ away from $y=0$; indeed, we establish a bound on $(\bar y/2, \infty)$.  To do this, we work with $\phi = \phi_\theta$ for $\theta < \Theta$.  A limiting argument then yields the bound for $\phi_\Theta$.

Since $\theta < \Theta$, $\phi$ is smooth and we differentiate~\eqref{e:phi} to find
\be\label{e:opsip}
	\begin{aligned}
		&\ophim''
		= \frac{\gamma - \beta}{2} - \frac{\beta^\frac12}{2} \frac{\Gamma'- \frac{\gamma}{2}y}{\sqrt{ \Gamma - \ophim}}.
	\end{aligned}
\ee
Clearly, we need only consider the case when $\Gamma - \phi \ll 1$ and we need only examine the extrema of the second term,  
which occur when
\[
	2 \left(\Gamma - \phi\right) \left( \Gamma'' - \frac{\gamma}{2}\right)
		= \left( \Gamma' - \frac{\gamma y}{2}\right)
			\left(\Gamma' - \phi'\right).
\]
Re-writing this using the form of $\phi'$, letting $Z = \Gamma - \phi$, and letting $Y = \Gamma' - \gamma y/2$, we find, at the extremum,
\[
	Z
		= \frac{Y}{2\left( \Gamma'' - \frac{\gamma}{2}\right)} \left( Y +  \frac{\sqrt\beta}{2} \sqrt Z\right).
\]
As we are only considering the domain $(\bar y/2,\infty)$, $\Gamma'' - \gamma/2$ is bounded above and below by a constant.  In addition, we are considering only the case where $Z \ll 1$.  Hence, the above can only hold for $Y$ satisfying
\be\label{e.c842}
	|Y| \leq C \sqrt Z
\ee
for a constant $C$ depending only $r$.  We conclude, from~\eqref{e.c842} that
\[
	\left|\frac{\Gamma' - \frac{\gamma}{2} y}{\sqrt{\Gamma - \phi}}\right|
		\leq C.
\]
The desired bound on $\ophim''$ follows from this and~\eqref{e:opsip}, thus concluding the proof.
\end{proof}

%
%
%
%
%

\subsubsection{The construction of $\ophic$}

In order to establish the lower bound on the front, we construct $\Phi$ and establish the requisite bounds.

\begin{prop}\label{p:Phi}
The function $\ophic: [0,\infty) \to \R$, defined by $\ophic = \phi_\Theta$ on $[0,\bar y]$ and solving
\begin{equation}\label{eq:Phi}
	\ophic' = \frac{\gamma}{2} y +\sqrt{ \beta (\Gamma - \ophic)}
				\qquad  \text{on } (\bar y,\infty)
\end{equation}
solves~\eqref{e:c11311}, is in $C^1_{\rm loc}(0,\infty)\cap W^{2,\infty}_{\rm loc}(0,\infty)$, and satisfies the bounds
	\[
		- C y^{-\frac{r-1}{2}}
			\leq \ophic'(y)
			\leq Cy \1_{[C^{-1},\infty)} - C^{-1} y^{-\frac{r-1}{2}}
		\quad\text{ and }\quad
		0 \leq \ophic''(y)
			\leq C\left( 1 + y^{- \frac{1+r}{2}}\right). 
	\]
	In addition, for $y \geq \bar y$, we have
	\[
		\frac{y^2}{4}
			\leq \Phi(y)
			\leq \frac{y^2}{4} + C.
	\]
\end{prop}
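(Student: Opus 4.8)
The plan is to construct $\ophic$ by solving the ``$+$''-root equation~\eqref{eq:Phi} forward from $\bar y$ with initial datum $\ophic(\bar y)=\phi_\Theta(\bar y)=\Gamma(\bar y)$ (the last equality is the tangency established in \Cref{p:phi}), and then to extract every bound from the two first-order identities
\[
	\phi_\Theta'(y)-\tfrac{\gamma}{2}y=-\sqrt{\beta(\Gamma-\phi_\Theta)}\ \text{ on }(0,\bar y),
		\qquad
	\ophic'(y)-\tfrac{\gamma}{2}y=\sqrt{\beta(\Gamma-\ophic)}\ \text{ on }(\bar y,\infty).
\]
Throughout I will use the elementary facts that $h(y):=\Gamma'(y)-\frac{\gamma}{2}y$ is strictly increasing with its unique zero at $\bar y$ (this is precisely the equation defining $\bar y$, cf.\ the proof of \Cref{p:phi}), so $h<0$ on $(0,\bar y)$ and $h>0$ on $(\bar y,\infty)$; that $\gamma>\beta$; and the algebraic identities $(1-\gamma)^2=\gamma^2-\beta=\frac{(r-1)^2}{(1+r)^2}$, immediate from $\gamma=\frac{2}{1+r}$ and $\beta=\frac{3-r}{1+r}$.

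First I would establish existence, the bound $\ophic<\Gamma$, globalness, and the matching at $\bar y$. The right-hand side $g(y,\zeta)=\frac{\gamma}{2}y+\sqrt{\beta(\Gamma(y)-\zeta)_+}$ of~\eqref{eq:Phi} is continuous and \emph{non-increasing} in $\zeta$, so Peano's theorem gives a local solution while the monotonicity in $\zeta$ supplies both uniqueness and a comparison principle for forward solutions (no Lipschitz bound is needed). A minimum-principle argument on $[\bar y,T]$ combined with the observation that $Z:=\Gamma-\ophic$ would have to satisfy $Z'(y_\ast)=h(y_\ast)>0$ at any interior zero $y_\ast>\bar y$ shows $Z\ge0$, and then $Z>0$ on $(\bar y,\infty)$; since $\ophic'\ge\frac{\gamma}{2}y>0$ and $\ophic\le\Gamma\le Cy^2$ on $[\bar y,\infty)$, the solution is global. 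The left derivative $\phi_\Theta'(\bar y)=\Gamma'(\bar y)$ (\Cref{p:phi}) equals the right derivative $g(\bar y,\Gamma(\bar y))=\frac{\gamma}{2}\bar y$ because $h(\bar y)=0$, so $\ophic\in C^1$ across $\bar y$; squaring the two displayed identities and inserting~\eqref{e:Gamma} turns each into $|\ophic'|^2-\gamma y\,\ophic'=Ay^{1-r}-\beta\ophic$, i.e.\ $\ophic-\Theta$ solves~\eqref{e:c11311} on all of $(0,\infty)$. For $W^{2,\infty}_{\rm loc}$ regularity near $\bar y$ I would desingularize by writing $Z=W^2$ with $W\ge0$: the equation becomes $W'=\frac{h}{2W}\pm\frac{\sqrt\beta}{2}$, whose right-hand side is Lipschitz away from $W=0$, and using $h(y)\sim h'(\bar y)(y-\bar y)$ with $h'(\bar y)>0$ forces $W(\bar y)=0$ with $W'$ having a finite one-sided limit on each side; hence $Z\in C^2$ near $\bar y$ with $Z(\bar y)=Z'(\bar y)=0$, so $\ophic=\Gamma-Z$ is $C^2$ there. (Equivalently, differentiating the first-order identity gives $\ophic''=\frac{\gamma-\beta}{2}+\frac{\beta h}{2(\ophic'-\gamma y/2)}$, and the ratio $h/(\ophic'-\gamma y/2)=h/(\pm\sqrt{\beta Z})$ has a finite limit as $y\to\bar y$.)

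Next come the pointwise bounds. On $(0,\bar y)$ they are exactly the estimates on $\phi'$ and $\phi''$ of \Cref{p:phi}. On $(\bar y,\infty)$, from~\eqref{eq:Phi} and $\Gamma-\ophic\le\Gamma\le Cy^2$ one gets $\frac{\gamma}{2}y\le\ophic'\le Cy$, and from $\ophic''=\frac{\gamma-\beta}{2}+\frac{\beta h}{2\sqrt{\beta(\Gamma-\ophic)}}$ with $h\ge0$, $\Gamma-\ophic>0$, $\gamma>\beta$ one gets $\ophic''\ge\frac{\gamma-\beta}{2}>0$; the matching upper bound $\ophic''\le C$ follows from the lower bound $\Gamma-\ophic\gtrsim y^2$ for large $y$ (next paragraph) together with the finite limit of $\ophic''$ at $\bar y$. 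The \emph{same} differential identity on $(0,\bar y)$, now with $h<0$ and $\phi_\Theta'-\frac{\gamma}{2}y=-\sqrt{\beta(\Gamma-\phi_\Theta)}<0$, gives $\phi_\Theta''\ge\frac{\gamma-\beta}{2}>0$; hence $\ophic''\ge0$ on all of $(0,\infty)$. Enlarging $C$ absorbs the constants and indicator factors, yielding the stated inequalities.

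For the quadratic asymptotics on $[\bar y,\infty)$ I would compare $\ophic$ with $\psi_-(y)=\frac{y^2}{4}$ and $\psi_+(y)=\frac{y^2}{4}+C$ as sub- and super-solutions of~\eqref{eq:Phi}. Using $(1-\gamma)^2=\gamma^2-\beta$, one has $\beta(\Gamma-\psi_-)=\frac{(\gamma^2-\beta)y^2}{4}+Ay^{1-r}>\big(\frac{1-\gamma}{2}y\big)^2=\big(\psi_-'-\frac{\gamma}{2}y\big)^2$, so $\psi_-'<g(y,\psi_-)$; and for $C\ge A\bar y^{1-r}/\beta$, $\beta(\Gamma-\psi_+)=\frac{(\gamma^2-\beta)y^2}{4}+Ay^{1-r}-\beta C\le\big(\frac{1-\gamma}{2}y\big)^2$ for $y\ge\bar y$, so $\psi_+'\ge g(y,\psi_+)$. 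Choosing in addition $C\ge\Gamma(\bar y)-\bar y^2/4$ so that $\psi_-(\bar y)\le\ophic(\bar y)=\Gamma(\bar y)\le\psi_+(\bar y)$, the comparison principle from the first step gives $\frac{y^2}{4}\le\ophic(y)\le\frac{y^2}{4}+C$, which in particular delivers $\Gamma-\ophic\gtrsim y^2$ and closes the loop above. The one genuinely delicate point is the corner $y=\bar y$: the square root in~\eqref{eq:Phi} destroys Lipschitz continuity exactly where $\ophic$ meets $\Gamma$, so standard ODE theory does not apply there, and the substitution $Z=W^2$ (equivalently, tracking the ratio $h/\sqrt{\beta Z}$) is what reveals that $\phi_\Theta$ on the left and $\ophic$ on the right leave $\bar y$ with matching first and second derivatives. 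Once that is in hand, the rest is comparison-principle bookkeeping together with the elementary identities relating $\gamma$, $\beta$, and $r$.
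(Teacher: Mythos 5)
Your proposal is correct in substance and, for most of the statement, follows the same route as the paper: the quadratic sandwich is obtained from exactly the same sub- and super-solutions $y^2/4$ and $y^2/4+C$ (with the identity $\gamma^2-\beta=(1-\gamma)^2$), convexity and the bounds on $\Phi'$, $\Phi''$ come from the same differentiated identity $\Phi''=\tfrac{\gamma-\beta}{2}\pm\tfrac{\sqrt\beta}{2}\,\big(\Gamma'-\tfrac{\gamma}{2}y\big)/\sqrt{\Gamma-\Phi}$ together with the sign of $\Gamma'-\tfrac{\gamma}{2}y$ on either side of $\bar y$, and the estimates on $(0,\bar y)$ and near $y=0$ are imported from \Cref{p:phi}, just as in the paper. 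Your treatment of existence and ordering is a nice, slightly more detailed variant (Peano plus monotonicity of the right-hand side in $\zeta$, which gives the comparison principle without Lipschitz continuity; the paper leaves this to a footnote). The one place where you genuinely diverge is the $W^{2,\infty}$ regularity at the corner $\bar y$: the paper gets the uniform bound on the second derivative by working with the strictly subcritical solutions $\phi_\theta$, $\theta<\Theta$, and the extremum analysis of $(\Gamma'-\tfrac{\gamma}{2}y)/\sqrt{\Gamma-\phi}$ from \Cref{p:phi}, then passing to the limit, whereas you desingularize via $Z=\Gamma-\Phi=W^2$. Your route works, but as written the decisive claim — that $W'$ has a finite one-sided limit at $\bar y$, equivalently that $(\Gamma'-\tfrac{\gamma}{2}y)/\sqrt{\beta Z}$ stays bounded as $y\downarrow\bar y$ — is asserted rather than derived; Lipschitz continuity of the $W$-equation away from $W=0$ does not by itself control the behavior at $W=0$. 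The missing step is short: with $h(y)=\Gamma'(y)-\tfrac{\gamma}{2}y\geq a'(y-\bar y)$ near $\bar y$, compare $Z$ (which solves $Z'=h-\sqrt{\beta Z}$, $Z(\bar y)=0$) with the barrier $\epsilon(y-\bar y)^2$ for $\epsilon$ small enough that $a'-\sqrt{\beta\epsilon}\geq 2\epsilon$; this yields $Z\geq\epsilon(y-\bar y)^2$ to the right of $\bar y$, hence the boundedness of $h/\sqrt{\beta Z}$ and the claimed one-sided limit (note also that the two one-sided second derivatives at $\bar y$ need not coincide, so you should only claim piecewise $C^2$, i.e.\ $W^{2,\infty}_{\rm loc}$, which is all the proposition asks). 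With that barrier argument inserted, your proof is complete and is arguably more self-contained at the corner than the paper's ``exactly as in \Cref{p:phi}''.
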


\begin{proof}
The construction of $\Phi$ on $(\bar y, \infty)$ can be done using standard methods\footnote{The fact that the right hand side of~\eqref{eq:Phi} is not Lipschitz in $\Phi$ is not used in the existence portion of standard well-posedness proofs; hence, the standard fixed point proof suffices for our setting.  While we do not require uniqueness of the solution on $(\bar y,\infty)$, this can be established, although with slightly more difficulty than the usual method.  Hence, we omit it.}.  We note that $\mathcal{C}^2$ bounds on $\Phi$ away from $0$ and $\infty$ can be established exactly as in \Cref{p:phi}.  The convexity follows from the fact that
\be\label{e:Phi_yy}
	\begin{aligned}
		&\ophic_\theta''
		= \frac{\gamma - \beta}{2} - \frac{\beta^\frac12}{2} \frac{\Gamma'- \frac{\gamma}{2}y}{\sqrt{ \Gamma - \ophic_\theta}}&\text{ in } (0,\bar y),\\
		&\ophic_\theta''
		= \frac{\gamma- \beta}{2} +\frac{\beta^\frac12}{2} \frac{\Gamma'- \frac{\gamma}{2}y}{\sqrt{ \Gamma - \ophic_\theta}}&\text{ in } (\bar y,\infty).
	\end{aligned}
\ee
and that $\Gamma' - \gamma y/2$ is negative in $(0,\bar y)$ and positive in $(\bar y, \infty)$.

The bounds on $\Phi'$ are clear from~\eqref{eq:Phi}, and the upper bound on $\Phi''$ near $y=0$ follows from \Cref{p:phi}.  Hence, we need only establish behavior of $\Phi$ and $\Phi''$ when $y \gg 1$.

To this end, observe that $\Phi_{\text{inf}} (y) \equiv y^2/4$ is a sub-solution of \eqref{eq:Phi} on $(\overline{y} , \infty)$. Indeed, $\ophic(\overline y) = \Gamma(\overline y) \geq \Phi_{\text{inf}} (\overline y)$, and since $\sqrt{\gamma^2-\beta} = 1-\gamma$,
\begin{align*}
\Phi_{\text{inf}}'(y)
	= \frac{y}{2} = \left(\gamma + \sqrt{\gamma^2-\beta} \right)\frac{y}2
	\leq \frac{\gamma}{2} y +\sqrt{ \beta \left(\frac{\gamma^2 y^2}{4\beta}  -\frac{y^2}{4}\right)} 
	&\leq \frac{\gamma}{2} y +\sqrt{ \beta (\Gamma - \Phi_{\text{inf}})}.
\end{align*}
Hence, $\Phi_{\inf} \leq \Phi$, by the comparison principle.

Observe that $\Phi_{\text{sup}} (y) \equiv y^2/4 + \frac{A \overline{y}^{1-r}}{\beta}$ is a super-solution of \eqref{eq:Phi} on $(\overline{y} , \infty)$. Indeed, $\ophic(\overline y) = \Gamma(\overline y) \leq \Phi_{\text{sup}} (\overline y)$ for $d$ large, and since $\sqrt{\gamma^2-\beta} = 1-\gamma$,
\begin{align*}
\Phi_{\text{inf}}'(y)
	&= \frac{y}{2} = \left(\gamma + \sqrt{\gamma^2-\beta} \right)\frac{y}2
	= \frac{\gamma}{2} y +\sqrt{ \beta \left(\Gamma(y) - \frac{A y^{1-r}}{\beta}  -\frac{y^2}{4}\right)} \\
	&\geq \frac{\gamma}{2} y +\sqrt{ \beta \left(\Gamma(y) - \frac{A \overline{y}^{1-r}}{\beta}  -\frac{y^2}{4}\right)} 
	\geq \frac{\gamma}{2} y +\sqrt{ \beta (\Gamma - \Phi_{\text{inf}})}.
\end{align*}
Hence, $\Phi \leq \Phi_{\sup}$, by the comparison principle.  Using these upper bounds in~\eqref{e:Phi_yy} yields the bounds on $\Phi''$ when $y\gg 1$.  This concludes the proof.
\end{proof}

\subsection{An upper bound on the front location}

The first step to proving the upper bound in \Cref{thm:main_delay}.(iii) is to build a supersolution of~\eqref{e:main} with $\ophim$, which was constructed in \Cref{s:phi}.  We work in the shifted frame with an increasing delay $s(t)$ to be determined.  Writing
\[
	u(t,x+2t - s(t))
		= \nu e^{-x} w(t,x),
\]
we see that
\[
	w_t + \dot s (w_x-w)
		= w_{xx} - A w ( x + \log(1/w))^{-(r-1)}.
\]

We use the natural change of variables discussed above ($\tau = t^\beta$ and $y = \frac{x}{t^\gamma}$), and define
\be\label{e.c844}
	v(\tau,y)
		= w( \tau^{1/\beta}, y \tau^{\gamma/\beta})
		\qquad\text{and}\qquad
	S(\tau) = s(\tau^{1/\beta}),
\ee
The new function $v$ satisfies
\begin{equation}\label{e:v_supersolution}
	\beta v_\tau		= \tau^{-2}  v_{yy}
			+ \beta\dot S v 
			- \beta\dot S \tau^{-\frac{\gamma}{\beta}} v_y
			+ \gamma \tau^{-1} y v_y
			- A( y +\tau^{-\frac{\gamma}{\beta}} \log(1/v))^{-(r-1)} v.
\end{equation}
Finding a supersolution of~\eqref{e:v_supersolution} yields an upper bound of $u$. We set
\begin{equation*}
S(\tau) = \Theta \tau - R(\tau),
\end{equation*}
with $\Theta$ defined in \eqref{e:critical_theta} and $R$ chosen in \Cref{l:supersolution_small_r}.

\begin{lemma}\label{l:supersolution_small_r}
Fix $\ophim$ solving~\eqref{e:phi} with $\theta = \Theta$ and let $R_0$ be any constant.  There exists a $C^1_{\rm loc}$ increasing function $R: (0,\infty)\to \R$ that is independent of $R_0$, such that $R(\tau) /\tau \to 0$ as $\tau \to\infty$, $R \geq 0$ on $[1,\infty)$, and the function
\[
	\overline v(\tau,y)
		= \exp\left\{R_0 + \tau (\Theta -  \ophim(y))\right\}
\]
is a supersolution of~\eqref{e:v_supersolution} on the domain $\{(\tau,y): y \geq 0, \tau \geq 1\}$.
\end{lemma}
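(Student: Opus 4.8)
The plan is to substitute $\overline v$ into \eqref{e:v_supersolution}, to use that $\phi=\phi_\Theta$ solves the limiting equation \eqref{e:c11311} to kill the principal part, and then to choose $R$ so that the remaining error is absorbed. It helps to fix the notation $\sigma:=1-\gamma/\beta=\tfrac{1-r}{3-r}$; since $r\in(1,3)$ one has $\sigma<0$ and $\gamma/\beta=\tfrac{2}{3-r}>-\sigma$, so $\tau^{\sigma}\to0$ and $\tau^{-\gamma/\beta}\to0$ as $\tau\to\infty$, the latter strictly faster.

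First I would carry out the reduction. With $\overline v_\tau=(\Theta-\phi)\overline v$, $\overline v_y=-\tau\phi'\overline v$ and $\overline v_{yy}=(\tau^2(\phi')^2-\tau\phi'')\overline v$, dividing \eqref{e:v_supersolution} by $\overline v$, inserting $\dot S=\Theta-\dot R$ and the identity $(\phi')^2=\gamma y\,\phi'+Ay^{1-r}-\beta\phi$ from \eqref{e:c11311}, the $O(1)$-in-$\tau$ terms cancel and $\overline v$ is a supersolution of \eqref{e:v_supersolution} iff, for all $y\ge0$ and $\tau\ge1$,
\[
	\beta\dot R(\tau)\,\bigl(1+\tau^{\sigma}\phi'(y)\bigr)\ \ge\ A\bigl(y^{1-r}-Z(\tau,y)^{-(r-1)}\bigr)\ -\ \tau^{-1}\phi''(y)\ +\ \beta\Theta\,\tau^{\sigma}\phi'(y),
\]
where $Z(\tau,y):=y+\tau^{-\gamma/\beta}\log(1/\overline v)=y-R_0\tau^{-\gamma/\beta}-\tau^{\sigma}(\Theta-\phi(y))$. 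Because the reaction in \eqref{e:v_supersolution} is monotone decreasing in the quantity that enters the bracket, it suffices to treat $R_0=0$: enlarging $R_0$ only lowers $Z$, hence only enlarges the subtracted reaction and makes the inequality easier, and for the slightly negative $R_0$ actually needed later the extra term $|R_0|\tau^{-\gamma/\beta}$ in $Z-y$ is of lower order $o(\tau^{\sigma})$ and does not affect anything. Thus henceforth $Z=y-\tau^{\sigma}(\Theta-\phi(y))$, and the $R$ that comes out will depend only on $r$ and $A$.

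Next I would bound the right-hand side above by some $E(\tau)\to0$, splitting $[0,\infty)$ into three regimes around the scale $y\sim\tau^{-\gamma/\beta}$ on which $\phi$ feels the singularity. \emph{(i) Inner regime} $y\lesssim\tau^{-\gamma/\beta}$: here $Z$ may be negative, but where it is one has $\overline v>e^{x}$ (so the associated $u$ exceeds $\nu$) and the comparison is vacuous; where $Z>0$ one uses $0\le A(y^{1-r}-Z^{-(r-1)})\le Ay^{1-r}$ together with $\phi''(y)\ge C^{-1}y^{-(1+r)/2}$ from \Cref{p:phi}.(3), and since $\tfrac{1+r}{2}>r-1$ for $r<3$ one gets $\tau^{-1}\phi''(y)\ge Ay^{1-r}$ throughout, so the first two terms on the right are $\le0$; as $\phi'<0$ here the last term is $\le0$ as well, and a short computation using $\phi'<0$ and $\dot R\le\Theta$ shows the inequality then holds for any $\dot R\ge0$ (one checks in passing that $1+\tau^{\sigma}\phi'(y)>0$ on $\{Z>0\}$, the threshold $\phi'=-\tau^{-\sigma}$ occurring at a smaller $y$ than $Z=0$, which boils down to $\tfrac{2}{3-r}>1$). \emph{(ii) Boundary layer} $y\sim\tau^{-\gamma/\beta}$: rescaling $y=\lambda\tau^{-\gamma/\beta}$ with $\lambda$ of order one and using $\phi(y)=\Theta-\tfrac{2\sqrt A}{3-r}y^{(3-r)/2}+o(y^{(3-r)/2})$ (and the analogous expansions of $\phi'$, $\phi''$, all consequences of \eqref{e:phi}), each of $Ay^{1-r}$, $AZ^{-(r-1)}$ and $\tau^{-1}\phi''(y)$ is of the growing size $\tau^{2(r-1)/(3-r)}$, but their combination is of lower order because the leading profile $\Theta-\tfrac{2\sqrt A}{3-r}y^{(3-r)/2}$ is exactly the one prescribed by \Cref{p:decayTW}.(iii) and makes \eqref{e:c11311} balance; what remains is $O(\tau^{\sigma})$ and $1+\tau^{\sigma}\phi'(y)$ stays bounded below by a positive constant on $\{Z>0\}$. \emph{(iii) Outer regime} $y\gtrsim1$: by \Cref{p:phi}, $\phi'$, $\phi''$ and $y^{1-r}$ are bounded and $|Z-y|=\tau^{\sigma}|\Theta-\phi(y)|\le C\tau^{\sigma}$ (using that $\phi-\Theta$ grows only polynomially, so $y^{-r}|\phi(y)-\Theta|\to0$), whence a first-order Taylor bound gives $A|y^{1-r}-Z^{-(r-1)}|\le C\tau^{\sigma}$ and $1+\tau^{\sigma}\phi'(y)\to1$. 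With $\tau$ in a compact set handled by hand (all quantities being continuous and bounded there), the three regimes together show that the right-hand side of the displayed inequality, divided by $1+\tau^{\sigma}\phi'(y)>0$, is $\le E(\tau)$ for some $E(\tau)\to0$ depending only on $r,A$.

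Finally I would set $\dot R(\tau):=\beta^{-1}\min\{\Theta/2,\ \max\{0,E(\tau)\}\}$ and $R(\tau):=\int_1^\tau\dot R(\tau')\,d\tau'$. Then $R\in C^1_{\rm loc}$; $\dot R\ge0$ so $R$ is increasing; $R(1)=0$ so $R\ge0$ on $[1,\infty)$; $\dot R(\tau)\to0$ forces $R(\tau)/\tau\to0$; the cap $\dot R\le\Theta/2$ secures $\dot S=\Theta-\dot R>0$, which was used above; and $R$ is independent of $R_0$. By the choice of $\dot R$ the displayed inequality holds, which by the first step is precisely the statement that $\overline v$ is a supersolution of \eqref{e:v_supersolution} on $\{y\ge0,\ \tau\ge1\}$. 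I expect the boundary-layer estimate (ii) to be the main obstacle: there several terms are individually of the growing size $\tau^{2(r-1)/(3-r)}$ and one must make the cancellation among $Ay^{1-r}$, $AZ^{-(r-1)}$ and $\tau^{-1}\phi''$ quantitative — ultimately a reflection of $\phi$ solving \eqref{e:c11311} with near-origin behavior matching the traveling wave of \Cref{p:decayTW}.(iii) — by carefully tracking the error terms in the local expansion of $\phi$ alongside the bounds of \Cref{p:phi}.
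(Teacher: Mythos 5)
Your reduction is exactly the one in the paper: substitute $\overline v$ into \eqref{e:v_supersolution}, use that $\phi_\Theta$ satisfies $(\phi')^2=\gamma y\,\phi'+Ay^{1-r}-\beta\phi$ (i.e.\ \eqref{e:c11311}) to cancel the order-one terms, and reduce the supersolution property to choosing $\beta\dot R$ above a remainder that tends to zero, then integrate; your displayed inequality coincides with the paper's bracketed expression once $\dot S=\Theta-\dot R$ is inserted. The only real difference is how the remainder is estimated. The paper does it globally in one stroke: convexity of $z\mapsto z^{1-r}$ gives $A Z^{1-r}-Ay^{1-r}\ge -A(r-1)y^{-r}\tau^{1-\gamma/\beta}\phi$, and the resulting negative terms, together with the $\phi_y$ term, are absorbed into the positive $\tau^{-1}\phi_{yy}\gtrsim \tau^{-1}y^{-(1+r)/2}$ contribution near $y=0$ by Young's inequality, yielding the explicit error $C\max\{\tau^{-1},\tau^{1-\gamma/\beta},\tau^{-(r-1)^2/(3-r)}\}$. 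You instead split into inner/boundary-layer/outer regimes; that organization also works, and your treatment of the set $\{Z\le 0\}$ (where $\overline v>e^{x}\ge v$, so only a first-touching-point comparison on $\{Z>0\}$ is ever needed) is in fact more careful than the paper, whose convexity step silently assumes $Z>0$.

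Two points should be fixed, though. First, the step you flag as the main obstacle, regime (ii), is not an obstacle, and the mechanism you describe there is not what occurs: on $\{Z>0\}$ at the scale $y\sim\tau^{-\gamma/\beta}$ one has $Z\le y$ (since $\phi\le\Theta$ there, with $R_0=0$), hence $A\bigl(y^{1-r}-Z^{-(r-1)}\bigr)\le 0$, while $-\tau^{-1}\phi''\le 0$ and $\beta\Theta\tau^{\sigma}\phi'\le 0$; so the right-hand side is nonpositive (in fact of size $\tau^{2(r-1)/(3-r)}$ and negative), not ``$O(\tau^{\sigma})$ after a quantitative cancellation,'' and no expansion of $\phi$ beyond \Cref{p:phi} is needed — this sign observation is the paper's convexity bound in disguise, and it also covers the intermediate range $\tau^{-\gamma/\beta}\ll y\le 1$ that your three named regimes do not explicitly include. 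Second, your final choice $\dot R=\beta^{-1}\min\{\Theta/2,\max\{0,E\}\}$ can break the supersolution inequality on the compact $\tau$-range if $E(\tau)>\Theta/2$ there; the cap is unnecessary, since $\dot S$ has been eliminated from your reduced inequality and positivity of $\dot S$ is never actually used, so simply take $\beta\dot R=\max\{0,E\}$ as the paper effectively does. (Also note that ``independent of $R_0$'' is only clean for $R_0$ bounded below — the paper implicitly uses $R_0+\tau\Theta\ge0$ — but, as you say, this is harmless since $R_0$ is ultimately taken large and positive.)
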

\begin{proof}
We leave $R$ arbitrary and choose it in the sequel.  Using~\eqref{e:phi}, we compute that
\[
	\begin{split}
		\beta \overline v_\tau
			&-\tau^{-2}  \overline v_{yy}
			- \beta\dot S \overline v 
			+ \beta\dot S \tau^{-\frac{\gamma}{\beta}} \overline v_y
			- \gamma \tau^{-1} y \overline v_y
			+ A( y +\tau^{-\frac{\gamma}{\beta}} \log(1/\overline v))^{-(r-1)} \overline v\\
			&= \left(
				\beta \Theta
				- \beta \ophim
				- |\ophim_y|^2 + \tau^{-1} \ophim_{yy}
				- \beta \dot S
				- \beta \dot S\tau^{1 - \frac{\gamma}{\beta}}\ophim_y
				+ \gamma y \ophim_y + A( y +\tau^{-\frac{\gamma}{\beta}} \log(1/\overline v))^{-(r-1)}
			\right)  \overline v\\
			&= \left(
				\beta \dot R
				+ \tau^{-1}  \ophim_{yy}
				- \beta \dot S\tau^{1 - \frac{\gamma}{\beta}}\ophim_y
				+ A( y +\tau^{-\frac{\gamma}{\beta}} (-R_0 - \tau \Theta + \tau \ophim))^{-(r-1)} - Ay^{-(r-1)}
			\right)  \overline v.
	\end{split}
\]
The rest of the proof is the estimate of the last line. Using convexity, we find
\begin{align*}
	&Ay^{1-r} \left( ( 1 +y^{-1}\tau^{-\frac{\gamma}{\beta}} (-R_0 - \tau \Theta  + \tau \ophim(y)))^{-(r-1)} - 1\right)	\\
	&\qquad\geq Ay^{1-r} (r-1)y^{-1}\tau^{-\frac{\gamma}{\beta}} (R_0 + \tau \Theta - \tau \ophim(y))
	\geq - A(r-1) y^{-r}\tau^{1-\frac{\gamma}{\beta}} \ophim(y) .
\end{align*}
Now, applying \Cref{p:phi} and use that $\dot S = \Theta + \dot R\geq \Theta$ yields
\begin{align*}
	\tau^{-1} &\ophim_{yy}
		+ \beta \dot S \tau^{1 - \frac{\gamma}{\beta}} \ophim_y - (r-1) y^{-r}\tau^{1-\frac{\gamma}{\beta}} \ophim(y) \\
	&\geq \tau^{-1}\left(- C\1_{[C^{-1},\infty]}
		+ C^{-1} y^{-\frac{1+r}{2}} \1_{[0,C^{-1}]}\right)
			- \Theta \tau^{1-\frac{\gamma}{\beta}} y^\frac{1-r}{2}
		\\&\qquad
		- (r-1) \tau^{1-\frac{\gamma}{\beta}} \left(Cy^{\frac{1-r}{2}}\1_{[C^{-1},\infty]}
		+ C^{-1} y^{-r} \1_{[0,C^{-1}]}\right)
	\geq -C \max\Big\{\tau^{-1},\tau^{1-\frac{\gamma}{\beta}}, \tau^{-\frac{(r-1)^2}{3-r}}\Big\}.
%
%
\end{align*}
In the last step above, we used Young's inequality with $p = (1+r)/(r-1)$ to absorb the negative $y^{(1-r)/2}$ term for small $y$ into the positive $y^{- (1+r)/2}$ term.

Combining all above estimates, we conclude that
\[
	\begin{split}
&\beta \dot R
				+ \tau^{-1}  \ophim_{yy}
				- \beta \dot S\tau^{1 - \frac{\gamma}{\beta}}\ophim_y
				+ A( y +\tau^{-\frac{\gamma}{\beta}} (-R_0 - \tau \Theta   + \tau \ophim\left(y\right)))^{-(r-1)} - Ay^{-(r-1)}\\
 &\geq \beta \dot R
				- C \max\{\tau^{-1},\tau^{1-\frac{\gamma}{\beta}}, \tau^{-\frac{(r-1)^2}{3-r}}\}.
	\end{split}
\]
The proof is then finished by choosing $R$ such that $\beta \dot R 
		= C \max\{\tau^{-1},\tau^{1-\frac{\gamma}{\beta}}, \tau^{-\frac{(r-1)^2}{3-r}}\}.$
\end{proof}

We now use \Cref{l:supersolution_small_r} to conclude the proof of the upper bound in \Cref{thm:main_delay}.(iii).

\begin{proof}[Proof of~\eqref{e:alg_delay_above}]
Let $\bar v$ be the supersolution of~\eqref{e:v_supersolution} constructed in \Cref{l:supersolution_small_r} with $R_0$ to be determined.

First we show that $\bar v \geq v$ (recall the definition of $v$ in~\eqref{e.c844}).  By the comparison principle, we have that this ordering holds as long as it holds on the parabolic boundary of the domain $\mathcal{P} = [1,\infty)\times (0,\infty).$  There are two components to this: $\cB_1 = \{1\}\times (0,\infty)$ and $\cB_2 = [1,\infty)\times \{0\}$.

We consider the case $\cB_1$ first.  By \Cref{l.heat_bounds} and the change of variables defining $v$, we find, for $y\geq 0$,
\[
	v(1,y)
		= \nu^{-1} e^{y} u(1, y + 2 - \Theta + R(1))
		\leq \frac{C}{(y + 2 - \Theta + R(1))_+ + 1} e^{y+ 1 - \frac{(y + 2 - \Theta - R(1))^2}{4}}.
\]
On the other hand, by \Cref{p:phi}, we have
\[
	\overline v(1,y)
		= \exp\left\{R_0 + \Theta - \phi(y)\right\}
		\geq \exp\left\{R_0 + \Theta - C - C y^\frac{1+r}{2}\right\}.
\]
It is clear that we may choose $R_0$ sufficiently large so that $v(1,y) \leq \overline v(1,y)$ for all $y\geq 0$.

Next, we consider the case $\cB_2$.  Since $u \leq 1$, it follows that $v(\tau, 0) \leq \nu^{-1}$.  On the other hand, $\overline v(\tau,0) = e^{R_0}$.  Hence, after increasing $R_0$ so that $e^{R_0} > \nu^{-1}$, we have $v(\tau,0) \leq \overline v(\tau,0)$.

The previous two paragraphs established that $v \leq \overline v$ on the parabolic boundary of $\mathcal{P}$, which implies that $v \leq \overline v$ on $\mathcal{P}$.  We now obtain the the bound on the front location using this inequality.

	Fix $x_0 >0$ to be chosen.  We have
\be\label{e.c845}
\begin{split}
		\lim_{t\to\infty}
			&\sup_{x \geq 2t - s(t)+x_0}
				u(t,x)
			= \nu  \lim_{t\to\infty}
			\sup_{x \geq x_0} e^{-x} w(t,x)\\
		&= \nu  \lim_{\tau \to\infty}
			\sup_{y \geq x_0\tau^{-\frac{\gamma}{\beta}}} e^{-y \tau^{\frac{\gamma}{\beta}}} w(\tau^{\frac{1}{\beta}},y \tau^{\frac{\gamma}{\beta}}) = \nu  \lim_{\tau \to\infty}
			\sup_{y \geq x_0\tau^{-\frac{\gamma}{\beta}}} e^{-y \tau^{\frac{\gamma}{\beta}}} v(\tau, y)\\	
		&\leq   \nu  \lim_{\tau \to\infty}
			\sup_{y \geq x_0\tau^{-\frac{\gamma}{\beta}}} e^{-y \tau^{\frac{\gamma}{\beta}}} \overline v(\tau, y) = \nu  \lim_{\tau \to\infty}
			\sup_{y \geq x_0\tau^{-\frac{\gamma}{\beta}}} e^{-y \tau^{\frac{\gamma}{\beta}}} \exp\left\{R_0 + \tau (\Theta -  \ophim(y))\right\}\\
		&\leq \nu  \lim_{\tau \to\infty} \exp\left\{R_0 + \tau \Theta\right\}
			\sup_{y \geq x_0\tau^{-\frac{\gamma}{\beta}}} e^{-y \tau^{\frac{\gamma}{\beta}} - \tau \ophim(y) }.
\end{split}
\ee

One can estimate
\[
	\inf_{y \geq x_0\tau^{-\frac{\gamma}{\beta}}} (y \tau^{\frac{\gamma}{\beta}} + \tau \ophim(y))
\]
in the following way. Given that $\phi$ is increasing for large $y$, the infimum is either attained at $y = x_0 \tau^{-\gamma/\beta}$ or it is attained at a finite value $y_\tau> x_0 \tau^{-\gamma/\beta}$.  We rule out the latter now.  Indeed, if attained at an interior point $y_\tau$, such a point necessarily satisfies $\ophim_y(y_\tau) = - \tau^{\frac{\gamma}{\beta}-1}$.  From~\eqref{e:phi}, which implies
\be\label{e.c846}
	\phi_y(y)
		\sim -\sqrt A y^\frac{1-r}{2},
\ee
we find $y_\tau \sim A^\frac{1}{r-1} \tau^{-\frac{\gamma}{\beta}}$. Thus, choosing $x_0 \geq 2 A^\frac{1}{r-1}$, the infimum occurs at the boundary $x_0\tau^{-\frac{\gamma}{\beta}}$ as claimed above.

Hence,~\eqref{e.c845} simplifies to:
\[
		\limsup_{t\to\infty}
			\sup_{x \geq 2t - s(t)+x_0}
				u(t,x)
			\leq \nu  \lim_{\tau \to\infty} \exp\left\{R_0 -x_0 + \tau \Theta - \tau \ophim(x_0\tau^{-\frac{\gamma}{\beta}})\right\}.
\]
Using again the asymptotics of $\phi_y$~\eqref{e.c846} to yield $\phi(y) \sim \Theta - 2 \sqrt A x^\frac{3-r}{2}/(3-r)$, we find
\[
	\limsup_{t\to\infty}
			\sup_{x \geq 2t - s(t)+x_0}
				u(t,x)
		\leq \nu \exp\left\{R_0 -x_0 + \frac{2A^\frac12}{3-r} x_0^{\frac{3-r}{2}}\right\}.
\]
The right hand side clearly tends to zero as $x_0\to\infty$.  Hence, recalling that $\Theta = \Theta_r A^\gamma$ by scaling, the proof is finished.
%
%
%
%

\end{proof}

\subsection{A lower bound on the front location}

%
%
%
%
%
%


We now complete the proof of \Cref{thm:main_delay}.(iii) by proving the lower bound~\eqref{e:alg_delay_below}.  We do so by constructing an appropriate subsolution of~\eqref{e:main} using $\ophic$, which was constructed in \Cref{s:phi}.

We work in the shifted frame and try to build a sub-solution. Write
\begin{equation*}
u(t,x+2t) = \nu e^{-x} w(t,x).
\end{equation*}
The function $w$ then satisfies
\begin{equation}\label{eq:w}
w_t = w_{xx} - A\left(x + \log(1/w)\right)^{1-r}w.
\end{equation}

\begin{lemma}\label{l.subsolution_sr}
There exists $p>0$, $\epsilon_0>0$ and a decreasing $\cC_{\rm loc}^1$ function $h$, such that $h(2^{-\beta}) = 0$ and, if $\e \in(0,\e_0)$ the function 
\begin{equation*}
\underline w(t,x) =  \epsilon \left(\frac{x}{t^\gamma}\right)^p  \exp\left\{ h\big(t^\beta \big) - t^\beta \ophic\left(x t^{-\gamma}\right) \right\}
\end{equation*}
is a sub-solution of \eqref{eq:w} on the domain $\left\lbrace (t,x), t \geq 1/2, x \geq 0\right\rbrace$.  Further $h(t)/ t \to 0$ as $t\to\infty$.
%
\end{lemma}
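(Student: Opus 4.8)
The plan is to pass to the self‑similar variables $\tau = t^\beta$, $y = x t^{-\gamma}$ — the same ones used for the upper bound in~\eqref{e.c844}--\eqref{e:v_supersolution}, but now with no delay inside the frame since we work at $x+2t$ — and set $\underline v(\tau,y) := \underline w(\tau^{1/\beta}, y\tau^{\gamma/\beta}) = \epsilon y^p e^{h(\tau) - \tau \ophic(y)}$. Starting from~\eqref{eq:w}, one checks that $\underline w$ is a subsolution on $\{t\ge 1/2,\ x\ge 0\}$ exactly when
\[
	\beta \underline v_\tau - \tau^{-2}\underline v_{yy} - \gamma\tau^{-1}y\,\underline v_y + A\bigl(y + \tau^{-\gamma/\beta}\log(1/\underline v)\bigr)^{1-r}\underline v \le 0
	\qquad\text{on } \{\tau\ge 2^{-\beta},\ y\ge 0\}.
\]
The first step is to substitute $\underline v$, divide by $\underline v>0$, and use the relation $|\ophic'|^2 - \gamma y\ophic' + \beta\ophic = A y^{1-r}$ — which $\ophic$ satisfies since, by \Cref{p:Phi}, it solves~\eqref{e:c11311} — to cancel all the $O(1)$ terms. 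This reduces the claim to
\[
	\beta h'(\tau) + A\Bigl[\bigl(y + \tau^{-\gamma/\beta}\log(1/\underline v)\bigr)^{1-r} - y^{1-r}\Bigr] + \tau^{-1}\ophic'' + 2p\tau^{-1}\tfrac{\ophic'}{y} - \gamma p\tau^{-1} - \tau^{-2}\tfrac{p(p-1)}{y^2} \le 0 .
\]

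Two of the four remaining groups of terms are then handled for free. First, choose $\epsilon_0$ small enough that $\underline v \le 1$ on the whole domain; since $h(\tau)-\tau\ophic(y)$ is decreasing in $\tau$ (as $h'<0$ and $\ophic>0$) and $h(2^{-\beta})=0$, this amounts to $\epsilon \sup_{y>0} y^p e^{-2^{-\beta}\ophic(y)} \le 1$, and the supremum is finite because $y^p\to 0$ at the origin while $\ophic(y)\ge y^2/4$ at infinity by \Cref{p:Phi}. Then $\log(1/\underline v)\ge 0$, so the argument of the nonlinearity is $\ge y>0$, and since $1-r<0$ the bracketed difference is $\le 0$ (and the nonlinearity is well defined). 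Second, $-\tau^{-2}p(p-1)/y^2 \le 0$ as soon as $p\ge 1$. It remains to control $\tau^{-1}\bigl(\ophic'' + 2p\ophic'/y - \gamma p\bigr)$ using the bounds of \Cref{p:Phi}: away from the origin $\ophic''$ is bounded and $\ophic'\le Cy$, so this is $\le C_p\tau^{-1}$; near the origin $\ophic''\le Cy^{-(1+r)/2}$ whereas $2p\ophic'/y \le -2pC^{-1} y^{-(1+r)/2}$, so for $p$ chosen large relative to the constant $C$ of \Cref{p:Phi} one gets $\ophic''+2p\ophic'/y \le 0$ there. Hence $\tau^{-1}\bigl(\ophic'' + 2p\ophic'/y - \gamma p\bigr) \le C_p\tau^{-1}$ uniformly in $y$, for a constant $C_p$ depending only on $r$ and $A$.

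It then suffices to take $h(\tau) = -\frac{C_p}{\beta}\log(2^\beta\tau)$, which is smooth, strictly decreasing, satisfies $h(2^{-\beta})=0$, obeys $\beta h'(\tau) = -C_p\tau^{-1}$, and has $h(t)/t \to 0$: the $\beta h'$ term absorbs the $C_p\tau^{-1}$ bound and, together with the two nonpositive groups above, the reduced inequality holds; unwinding the change of variables gives the statement of the lemma. All of this is valid a.e. because $\ophic \in C^1_{\rm loc}(0,\infty)\cap W^{2,\infty}_{\rm loc}(0,\infty)$, which is all the comparison principle will require downstream. The main obstacle is precisely the small‑$y$ competition in the last step: the second derivative of $\ophic$ contributes a positive singular term $\tau^{-1}\ophic'' \sim y^{-(1+r)/2}$, and it is the polynomial prefactor $y^p$, with $p$ taken large relative to the $W^{2,\infty}$‑constants of \Cref{p:Phi}, that produces the compensating negative singular term $2p\tau^{-1}\ophic'/y \sim -y^{-(1+r)/2}$ with a larger coefficient. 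The only cost — a smaller $\epsilon_0$ and an $h$ that decays like $-\log\tau$ rather than staying bounded — is harmless.
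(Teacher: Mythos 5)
Your proposal is correct and follows essentially the same route as the paper: the same self-similar change of variables, cancellation of the $O(1)$ terms via the Hamilton--Jacobi relation $|\ophic'|^2-\gamma y\ophic'+\beta\ophic=Ay^{1-r}$, discarding the nonlinearity difference by choosing $\e$ small so that $\log(1/\underline w)\geq 0$, absorbing the singular $\ophic''\lesssim y^{-(1+r)/2}$ term by taking $p$ large so that $2p\ophic'/y$ dominates it (using \Cref{p:Phi}), and then choosing $h$ with $\beta h'=-C_p/\tau$. The only differences are cosmetic (an explicit logarithmic formula for $h$, a slightly more detailed justification that $\underline w\leq 1$, and the remark about a.e. validity from $\ophic\in W^{2,\infty}_{\rm loc}$), none of which change the argument.
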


\begin{proof}
We use the same change of variables as in \Cref{l:supersolution_small_r}; that is, letting
\[
	\underline z(\tau,y)
		= \underline w(\tau^{1/\beta}, y \tau^{\gamma/\beta})
		= \epsilon y^p \exp\left\{h(\tau) - \tau \Phi(y)\right\}.
\]
it suffices to show that $\cL \underline z \leq 0$ where
\[
	\cL z
		= \beta z_\tau - \frac{\gamma}{\tau} y z_y - \frac{1}{\tau^2} z_{yy}
			+ A \left(y - \tau^{-\frac{\gamma}{\beta}} \log z \right)^{1-r} z.
\]

We now compute $\cL$ and use that $- \beta \Phi + \gamma y \Phi_y - |\Phi_y|^2 + A y^{1-r} =0$ to find
\be\label{e.c847}
	\begin{split}
		\underline{z}^{-1} &\mathcal{L}(\underline{z})
			= \beta h'(\tau)
				- \beta \ophic
				- \gamma \tau^{-1} p
				+ \gamma  y \ophic_y
				- \tau^{-2} \frac{p(p-1)}{y^2} + 2 \tau^{-1}  
				\frac{p}{y} \ophic_y\\
				&\qquad\qquad +  \tau^{-1} \ophic_{yy}
					-\vert \ophic_y \vert^2
					+ A\left(y - (1+\beta \tau)^{-\frac{\gamma}{\beta}} [  \log(\underline z)]\right)^{1-r}\\
			&= \beta h'(\tau) - \gamma p   \tau^{-1} 
- \tau^{-2} \frac{p(p-1)}{y^2} + \tau^{-1} \left[2 \frac{p}{y}  
 \ophic_y +\ophic_{yy} \right] + A\left(y- \tau^{-\frac{\gamma}{\beta}}  \log(\underline z)\right)^{1-r} - Ay^{1-r}\\
			 &= \beta h'(\tau)
			 	+ \left[2 \frac{p}{y}  
 \ophic_y +\ophic_{yy} - \gamma p - \frac{p(p-1)}{\tau y^2}\right]  \tau^{-1}  + A\left[\left(y- \tau^{-\frac{\gamma}{\beta}}  \log(\underline z)\right)^{1-r} - y^{1-r}\right].
	\end{split}
\ee
The two bracketed terms in the last line of~\eqref{e.c847} require bounds.

We begin with the first bracketed term.  Using \Cref{p:Phi}, we find
\[
	2 \frac{p}{y} \ophic_y +\ophic_{yy}
 		\leq  - \frac{p}{C} y^{-\frac{r+1}{2}} + Cp + C\left(1 + y^{-\frac{1+r}{2}}\right).
\]
Choosing $p$ sufficiently large, we find $2 \frac{p}{y} \ophic_y +\ophic_{yy} \leq C$.  Choosing $h$ such that $h(2^{-\beta}) = 0$ and $\beta h' = -C/\tau$ yields
\be\label{e.c848}
	\beta h'(\tau) 
		+ \left[2 \frac{p}{y} \ophic_y +\ophic_{yy}\right]\tau^{-1}
		\leq 0.
\ee

Next, we consider the second bracketed term.  Notice that, for $\tau \geq 2^{-\beta}$,
\[
	-\log(\underline z)
		= \log(1/\epsilon)
			- p \log(y)
			- h(\tau)
			+ \tau \Phi(y)
		\geq 0.
\]
The last inequality follows, after possibly decreasing $\e$, from \Cref{p:phi} and \Cref{p:Phi}, which imply that $\Phi(y) \geq \min\{C^{-1}, y^2/4\}$.  Hence,
\be\label{e.c849}
	\left( y - \tau^{-\frac{\gamma}{\beta}} \log(\underline z)\right)^{1-r}
		- y^{1-r} \leq 0.
\ee

Using the bound~\eqref{e.c848} and~\eqref{e.c849} in~\eqref{e.c847}, we conclude that
\[
	\underline z^{-1} \cL(\underline z) \leq 0.
\]
which concludes the proof.
\end{proof}

We now conclude the proof of the lower bound in \Cref{thm:main_delay}.(iii) using \Cref{l.subsolution_sr}.

\begin{proof}[Proof of \eqref{e:alg_delay_below}]
Let $\underline w$ be the function defined in \Cref{l.subsolution_sr} with $\epsilon$ to be determined.  We shift $\underline w$, letting
\[
	\underline w_{\rm shift}(t,x)
		= \underline w(t-1/2, x).
\]
This shift allows us to more easily ``fit'' $\underline w_{\rm shift}$ under $w$ at time $t=1$.

We aim to use the comparison principle to show that $\underline w_{\rm shift} \leq w$ as long as $\e$ is sufficiently small.  Since~\eqref{eq:w} is autonomous, we have that $\underline w_{\rm shift}$ is a subsolution of~\eqref{eq:w} on $\cP = \{(t,x) : t > 1, x > 0\}$.  Hence, we need only check the ordering of $\underline w_{\rm shift}$ and $w$ on the parabolic boundary of $\cP$.

Since $u$, and thus $w$, is positive for any positive time, the ordering at $x=0$ is straightforward for any $t \geq 1$.  Hence, we need only check the portion of the parabolic boundary when $t=1$.  Using \Cref{l.heat_bounds}, we find
\[
	w(1,x)
		= \nu^{-1} e^x u(1, x + 2)
		\geq \frac{1}{\nu C (x + 3)} e^{x - \frac{(x + 2)^2}{4} - C(x+2)}
		\geq \frac{1}{\nu C (x + 3)} e^{ - \frac{x^2}{4} - Cx}.
\]
On the other hand, we have, using \Cref{p:Phi},
\[
	\underline w_{\rm shift}(1,x)
		= \underline w(1/2,x)
		= 2^{\gamma p} \e x^p e^{- 2^{-\beta} \Phi(x 2^\gamma)}
		\geq 2^{\gamma p} \e x^p e^{- 2^{-\beta} \frac{(x 2^\gamma)^2}{4}}
		= 2^{\gamma p} \e x^p e^{- \frac{x^2}{2}}.
\]
It follows that, up to decreasing $\e$, we have $\underline w_{\rm shift} \leq w$ at $t=1$.  We conclude that $\underline w_{\rm shift} \leq w$ on $\cP$ from the comparison principle.

%

This yields a decaying bound beyond the front, so in order to obtain a bound near the front, we ``trace back'' with a traveling wave as we did in the proof of~\eqref{e:weak_log_delay_below} (see also \cite[Section 3]{HNRR_Short}).  As such, we provide only a brief outline.

Fix any $\bar \e \in (0,1)$ and any $\overline \Theta > \Theta$.  Define $U_{\bar\e}$ analogously as in~\eqref{e.c7242} but with $-2$ replaced by $1-r$.  Let $L>0$ be a constant to be chosen and
\[
	\underline u(t,x) = U(x - 2t + \overline \Theta t^\beta + L).
\]
As in the proof of~\eqref{e:weak_log_delay_below}, $\underline u$ is a subsolution of~\eqref{e:main} on $[t_0, \infty)\times \R$ for some $t_0\geq 1$.

Let $\ell(t)$ be a $\cC^1_{\rm loc}$ function to be chosen such that $\ell(t) \to 0$ and $\ell(t)t^\gamma$ is increasing to infinity as $t\to\infty$.  We show that $\underline u \leq u$ on $\cP = \{(t,x) : t > t_0, x < 2t + \ell(t)t^\gamma\}$ by the comparison principle.  To achieve this, we need only check that $\underline u \leq u$ on the parabolic boundary of $\cP$.  We check this now.

First, we examine the portion of the parabolic boundary of $\cP$ where $t = t_0$. By construction $\liminf_{x\to-\infty} U_{\bar\e}(x) < 1$.  Hence, up to increasing $t_0$, we have that
\[
	\liminf_{x\to-\infty} u(t_0,x)  > \liminf_{x\to-\infty} U_{\bar \e}(x).
\]
After possibly increasing $L$, which ``shifts'' $U_{\bar \e}$ to the left, we find that $\underline u(t_0,\cdot) \leq u(t_0,\cdot)$ on $(-\infty, 2 t_0 + \ell(t_0)t_0^\gamma)$, as desired.

Next, we check the portion of the parabolic boundary where $x = 2t_0 + \ell(t_0)$.  Using \Cref{p:decayTW} and possibly increasing $L$, we find
\be\label{e.c851}
\begin{split}
	\underline u(t, 2t + \ell(t)t^\gamma)
		&= U_{\bar \e}(\ell(t)t^\gamma + \overline \Theta t^\beta + L)\\
		&\leq \exp\left\{ - \ell(t)t^\gamma - \overline \Theta t^\beta - L + \frac{4}{3-r} \sqrt A \left(\ell(t)t^\gamma + \overline \Theta t^\beta + L\right)^\frac{3-r}{2}\right\}.
\end{split}
\ee
On the other hand, using~\eqref{e.c7242} and that $h$ is decreasing, we have, for all $t\geq t_0$,
\be\label{e.c852}
\begin{split}
	u(t, &2t + \ell(t)t^\gamma)
		= \nu e^{-\ell(t)t^\gamma} w(t, \ell(t)t^\gamma)
		\geq \nu e^{-\ell(t)t^\gamma} \underline w_{\rm shift}(t, \ell(t)t^\gamma)\\
		&= \nu \left(\frac{\ell(t)t^\gamma}{(t-1/2)^\gamma}\right)^p \exp\left\{- \ell(t)t^\gamma + h\left((t-1/2)^\beta\right) - (t-1/2)^\beta \Phi\left(\frac{\ell(t) t^\gamma}{(t-1/2)^\gamma}\right)\right\}\\
		&\geq \frac{1}{C}
			\left(\ell(t)\right)^p
			\exp\left\{
				- \ell(t)t^\gamma
				+ h(t^\beta)
				- \Theta t^\beta
				+ \frac{1}{3-r} \sqrt A \ell(t)^\frac{3-r}{2} t^\beta
				\right\}
\end{split}
\ee
The last inequality holds as long as $\ell(t_0)$, and thus $\ell(t)$, is sufficiently small that the asymptotics of~\eqref{e.c7242} hold.  
Hence, after rearranging~\eqref{e.c851} and~\eqref{e.c852}, we have $\underline u(t,2t+\ell(t)) \leq u(t,2t+\ell(t))$ as long as
\[\begin{split}
	- \overline \Theta t^\beta - L &+ \frac{4}{3-r} \sqrt A \left(\ell(t)t^\gamma + \overline \Theta t^\beta + L \right)^\frac{3-r}{2}\\
		&\leq - \log(C) - p \log(1/\ell(t)) + h(t^\beta) - t^\beta \Theta + \frac{1}{3-r} \sqrt A \ell(t)^\frac{3-r}{2},
\end{split}\]
which, rearranged, is equivalent to
\[
	\frac{p \log(1/\ell(t)) 
		- h(t^\beta)}{t^\beta}
		 + \frac{\sqrt A}{3-r} \left(4\left(\ell(t) + \frac{\overline \Theta t^\beta + L}{t^\gamma} \right)^\frac{3-r}{2} - \ell(t)^\frac{3-r}{2}\right)
		\leq  \left(\overline \Theta - \Theta\right) + \frac{L- \log(C)}{t^\beta}.
\]
Recall that $h(t^\beta)/t^\beta \to 0$ as $t\to\infty$ by \Cref{l.subsolution_sr}.  Hence, such a condition holds if $L$ is chosen sufficiently large and $\ell(t)$ is defined to be, say, $t^{\beta - \gamma}$.  We conclude that $\underline u \leq u$ on the parabolic boundary of $\cP$, and, thus, we find that $\underline u \leq u$ on $\cP$ by the comparison principle.

%
%
%
We finish the proof by noting that:
\begin{align*}
	\liminf_{t \to \infty} &\inf_{x \leq 2t - \left(\Theta + 2(\overline \Theta - \Theta)\right) t^\beta} u(t,x)
		= \liminf_{t \to \infty} \inf_{x \leq -(\overline \Theta - \Theta)t^\beta} u(t,x + 2t - \overline \Theta t^\beta)\\
		&\geq \liminf_{t \to \infty} \inf_{x \leq -(\overline \Theta - \Theta)t^\beta} \underline u(t,x + 2t - \overline \Theta t^\beta)
		= \liminf_{t \to \infty} \inf_{x \leq -(\overline \Theta - \Theta)t^\beta} U_{\bar \e}(x + L)\\
		&= \liminf_{t\to\infty} U_{\bar \e}\left(- (\overline \Theta - \Theta) t^\beta + L\right)
		= \lim_{x\to-\infty} U_{\bar \e}(x).
\end{align*}
The last line follows because $U_{\bar \e}$ is decreasing (see \Cref{p:decayTW}).  The result follows from the arbitrariness of $\bar \e$, the fact that $\lim_{x\to-\infty} U_{\bar \e}(x) \to 1$ as $\bar \e \to 0$, the arbitrariness of $\overline \Theta$, and the fact that $\Theta = \Theta_r A^\gamma$, observed above.
\end{proof}

\section{A technical lemma: the weak small time bounds on $u$}\label{s:technical}


\begin{proof}[Proof of \Cref{l.heat_bounds}]
For both the upper and lower bounds, we require $h$ satisfying $h_t = h_{xx}$ in $(0,\infty)\times \R$ with the initial data $u(0,\cdot) = u_0$.  We begin with the lower bound.

{\bf The lower bound.} 
Let $\underline u = h$.  Clearly $\underline u$ is a subsolution of~\eqref{e:hb} since $f \geq 0$.  Hence, by the comparison principle, $\underline u \leq u$.  Using the heat kernel, we thus find, for any $x$,
\[\begin{split}
	u(t,x)
		\geq \underline u(t,x)
		= \int \frac{e^{-\frac{(x-y)^2}{4t}}}{\sqrt{4\pi t}} u_0(y) dy.
\end{split}\]
From~\eqref{e:u_0}, we have $\epsilon>0$ such that $u_0 \geq \epsilon \1_{(-\infty,-1/\epsilon)}$.  Hence,
\[\begin{split}
	u(t,x)
		&\geq \e \int_{-\infty}^{-1/\e} \frac{e^{-\frac{(x-y)^2}{4t}}}{\sqrt{4\pi t}} dy.
\end{split}\]

If $x\leq \sqrt t$, we find
\[
	u(t,x)
		\geq \e \int_{-\infty}^{-\sqrt t - \frac{1}{\e}}\frac{e^{-\frac{y^2}{4t}}}{\sqrt{4\pi t}} dy
		= \e \int_{-\infty}^{-1 -\frac{1}{\e\sqrt t}} \frac{e^{-\frac{y^2}{4}}}{\sqrt{4 \pi}} dy
		\geq \e \int_{-\infty}^{-1-\frac{1}{\e}} \frac{e^{-\frac{y^2}{4}}}{\sqrt{4 \pi}} dy,
\]
where we used that $t\geq 1$.  This yields the claim in this case by choosing $C$ sufficiently large.

If $x > \sqrt t$, we instead find
\[\begin{split}
	u(t,x)
		&\geq \e \int^{\infty}_{x+1/\e} \frac{e^{-\frac{y^2}{4t}}}{\sqrt{4\pi t}} dy
		= \e \int_{\frac{x+1/\e}{\sqrt t}}^{\infty} \frac{e^{-\frac{y^2}{4 }}}{\sqrt{4\pi }} dy
		\geq \e \int_{\frac{x+1/\e}{\sqrt t}}^{\frac{x+1/\e}{\sqrt t} + \frac{\sqrt t}{x+1/\e}} \frac{e^{-\frac{y^2}{4 }}}{\sqrt{4\pi }} dy\\
		&\geq \e \int_{\frac{x+1/\e}{\sqrt t}}^{\frac{x+1/\e}{\sqrt t} + \frac{\sqrt t}{x+1/\e}} \frac{e^{- \frac{(x+1/\e)^2}{4t} - \frac{1}{2} - \frac{\sqrt t}{4(x+1/\e)^2} }}{\sqrt{4\pi}} dy
		\geq \frac{\e e^{-2}}{\sqrt{4\pi}} \frac{\sqrt t}{x + \frac{1}{\e}} e^{-\frac{(x+1/\e)^2}{4t}}.
\end{split}\]
The claim is then finished by choosing $C$ sufficiently large depending on $\e$.  This concludes the proof of the lower bound.

{\bf The upper bound.}
The arguments for the upper bound are similar, except that we have the upper bound on the initial data:
\[
	u_0
		\leq \1_{(-\infty,0)}.
\]
When $x \leq \sqrt t$, we have $u(t,x) \leq 1$, so we need only consider the case $x\geq \sqrt t$.

We construct a supersolution bounding $u$ from above.  Indeed, since $f(s) \leq s$ for all $s\in[0,1]$, we note that $\overline u(t,x) = e^t h(t,x)$ is a supersolution of~\eqref{e:hb} as it satisfies $\overline u_t = \overline u_{xx} + \overline u$.  Hence, $u \leq \overline u$.  It follows that we need only bound $\overline u$ from above.  In fact, it is clear that we only require a bound on $h$ since the integrating factor $e^t$ is explicit.

Arguing exactly as above, we find
\[
	h(t,x)
		\leq \frac{1}{\e} \int_{-\infty}^{- \frac{x}{ \sqrt t}} \frac{e^{-\frac{y^2}{4}}}{\sqrt{4\pi}} dy
		\leq \frac{1}{\e} \int_{\frac{x}{ \sqrt t}}^{\infty} \frac{e^{-\frac{y}{4} \frac{x}{\sqrt t}}}{\sqrt{4\pi}} dy
		\leq \frac{1}{\e \sqrt{4\pi}} \frac{\sqrt t}{x} e^{- \frac{x^2}{4t}},
\]
which concludes the proof.
\end{proof}

 \bibliographystyle{abbrv}
  \bibliography{refs}
\end{document}